\newtheorem{Proposition}{Proposition}
  \newtheorem{Remark}{Remark}
  \newtheorem{Lemma}[Proposition]{Lemma}
  \newtheorem{Theorem}{Theorem}
 \newtheorem{Definition}[Proposition]{Definition}
 \newtheorem{Note}[Remark]{Note}
\newcommand {\z}{{\noindent}}
\def\L|{\left\|}
\def\R|{\right\|}
\def\epsilon{\varepsilon}
\def\blackslug{\hbox{\hskip 1pt \vrule width 4pt height 8pt depth 1.5pt
\hskip 1pt}}
\def\qed{\quad\blackslug\lower 8.5pt\null\par}
 \def\RR{\mathbb{R}}
\author{O. Costin$^1$ and S. Tanveer$^1$} \address{$^1$ Mathematics Department\\The Ohio State University\\Columbus, OH 43210 } 
\title{Analytical approximation of  Blasius' similarity solution with rigorous error bounds}
\begin{document}
$ $ \vskip -0.2cm
\begin{abstract}
  We use a recently developed method \cite{Costinetal}, \cite{Dubrovin} 
   to find accurate analytic approximations with rigorous error bounds for the classic 
  similarity solution of Blasius of the boundary layer equation in fluid mechanics, the two point boundary value problem
   $f^{\prime \prime \prime} + f f^{\prime \prime} =0$ with $f(0)=f^\prime (0)=0$
  and $\lim_{x \rightarrow \infty} f^\prime (x) =1$. The approximation
  is given in terms of a polynomial in $[0, \frac{5}{2}]$ 
  and in terms of the error function
  in $[\frac{5}{2}, \infty)$. The two representations for the solution
  in different domains match at 
  $x=\frac{5}{2}$
  determining all free
  parameters in the problem, in particular 
  $f^{\prime \prime} (0) =0.469600 \pm 0.000022 $ 
  at the wall  The method can in principle
  provide approximations to any desired accuracy 
  for this or wide classes of linear or nonlinear differential equations with initial or
  boundary value conditions. The analysis relies on controlling
  the errors in the approximation through contraction 
 mapping arguments, using energy bounds for the Green's function of the linearized problem.
  \end{abstract}

\vskip -2cm
\maketitle

\today

\section{Introduction and main results}

Finding exact expressions for solutions to nonlinear problems is an
  important area of research. Closed
  form solutions however exist only for a small sub-class of problems
(essentially for integrable models). On the other-hand, if a
  problem involves some small parameter $\epsilon$ (or a large
  parameter) and the limiting problem is exactly solvable, then there
  exist quite general asymptotic methods to obtain convenient
  expansions for the perturbed problem.

Indeed, consider for instance the question of finding the solution to
 $\mathcal{N} [ u ,\epsilon]=0$, where $\mathcal{N}$ is a (possibly nonlinear) differential operator 
in some space of functions satisfying boundary/initial
conditions and that $u_0$ is the solution at $\epsilon=0$. Existence and uniqueness of a solution
$u$ as well bounds on the error $E=u-u_0$ 
may be found as follows. We write
$$ L\,E=- \delta-\mathcal{N}_1(E)$$ 
where $L=\frac{\partial\mathcal{N}}{\partial u}|_{u=u_0}$, $\delta = \mathcal{N} [u_0]$ and 
$\mathcal{N}_1(E)=\mathcal{N}(u_0+ E)-L\,E  =O(\epsilon^2)$, inverting  $L$ in a suitable way, subject to the initial/boundary
conditions, and using the contractive mapping theorem in an adapted norm to control the small nonlinearity $\mathcal{N}_1(E)$.

A relatively general strategy has been recently
been employed \cite{Costinetal}-\cite{Dubrovin} in problems without explicit small parameters. The approach 
uses exponential asymptotic methods and classical orthogonal polynomial techniques  
to find a function $u_0$ which is a very accurate global approximation of the sought solution $u$, 
in the sense that 
$\mathcal{N}(u_0)$ is very small in a suitable norm and the boundary conditions are satisfied up to small errors. 
Once this is accomplished, a 
 perturbative approach similar with the one above  applies 
with the role of $\epsilon$   played by the norm of $\delta $ and one obtains  
an actual solution $u$ by controlling the equation satisfied by $E=u-u_0$.

In the present paper,
we apply this strategy to the well-known Blasius similarity equation arising in boundary
layer fluid-flow past a flat plate.
We 
improve the methods in \cite{Costinetal}-\cite{Dubrovin} 
by replacing the laborious detailed  analysis  of the Green's function  with softer  energy methods.

The Blasius similarity solution 
satisfies
\begin{equation}
\label{1}
f^{\prime \prime \prime} (x) + f (x) f^{\prime \prime} (x) = 0  ~~ \, {\rm for} ~x \in (0, \infty) 
\end{equation}
with boundary conditions
\begin{equation}
\label{2}
f(0) = 0 \ , f^\prime (0) = 0 \ ,   \lim_{x \rightarrow +\infty} f^\prime (x) = 1 
\end{equation}
This equation and various modifications
have garnered much attention since Blasius \cite{Blasius}
derived it\footnote{The equation
in the original Blasius' paper has a coefficient $\frac{1}{2}$ for $f f^{\prime}$;
however the
change of variable
$x \rightarrow x/\sqrt{2}$, $f \rightarrow f/\sqrt{2}$ transforms (\ref{1}) into Blasius' original equation. 
Thus, $f^{\prime \prime} (0) = 0.469600 \pm 0.000022$ 
transforms to $f^{\prime \prime} (0) = 0.3320574 \pm 0.000016$ 
in the original variables.}.   Existence and uniqueness were first proved 
by Weyl in \cite{Weyl}. Issues of existence and
uniqueness for this and related equations
have been considered as well by many 
authors (see for eg. \cite{Hussaini}, \cite{Brighi}, the latter being a review paper).
Hodograph transformations 
\cite{Callegari} allow a 
convergent power series representation in the entire domain,
but the convergence is slow at the edge of the domain and the
representation is not quite convenient in finding an approximation to $f$ directly.
Empirically, there has been quite a bit of interest in obtaining simple
expressions for Blasius and related similarity solutions. Liao \cite{Liao}
for instance
introduced a formal method for
an emprically accurate approximation; the theoretical
basis for this procedure and its limitations remain however unclear.
We are unaware of any rigorous error control for this or any
other efficient approximation in terms of simple functions.

In \cite{Weyl}, using a transformation introduced
by Topfer \cite{Topfer}, 
it is also proved that $f$ in \eqref{1}, \eqref{2} can be expressed as
\begin{equation}
\label{2.1}
f (x) = a^{-1/2} F \left ( a^{-1/2} x \right ),
\end{equation}
where
$F$ satisfies the initial value problem
\begin{equation}
\label{3}
F^{\prime \prime \prime} (x) + F (x) F^{\prime \prime} (x) = 0  ~~ \, {\rm for} ~x \in (0, \infty) 
\end{equation}
with initial conditions
\begin{equation}
\label{4}
F(0) = 0 \ , F^\prime (0) = 0 \ ,   F^{\prime \prime} (0) = 1, 
\end{equation}
In \eqref{2.1}, $\lim_{x \rightarrow \infty} F^\prime (x)=a\in\RR^+$ (cf. \cite{Weyl,Topfer}).
Conversely, if $f(x)$ satisfies \eqref{1}-\eqref{2} with $f^{\prime \prime} (0) = \beta > 0$ (physically, 
this corresponds to positive wall stress), it may be checked that
$F (x) = \beta^{-1/3} f (\beta^{-1/3} x ) $ satisfies \eqref{3}-\eqref{4}.
Because of this equivalence, 
it is more convenient to find an approximate analytical representation for the solution $F$
of (\ref{3})-(\ref{4}) and determine the value of $a $. The solution $f$ to the original problem is obtained
through the transformation (\ref{2.1}); 
the stress  at the wall is given by
\begin{equation}
\label{4.1}
f^{\prime \prime} (0) = a^{-3/2} 
\end{equation}
\subsection{Approximate representation of the solution}
Let
\begin{equation}
\label{6}
P(y) = \sum_{j=0}^{12} \frac{2}{5 (j+2)(j+3)(j+4)} p_j y^{j} \, 
\end{equation}
where $[p_0,...,p_{12}]$ are given by
{\small \begin{multline}
\Big[
-\frac{510}{10445149}, 
-\frac{18523}{5934}, -\frac{42998}{441819}, \frac{113448}{81151},-\frac{65173}{22093},\frac{390101}{6016},-\frac{2326169}{9858}, \\ \frac{4134879}{7249}, -\frac{1928001}{1960},
\frac{20880183}{19117} , -\frac{1572554}{2161} ,  \frac{1546782}{5833} , -\frac{1315241}{32239}\Big]
\end{multline}}
Define
\begin{equation}
\label{10.2}
t(x)=\frac{a}{2}(x+b/a)^2,
\ I_0 (t) = 1 - \sqrt{\pi t} e^{t} {\rm erfc} (\sqrt{t})  \ , ~~ 
J_0 (t) = 1 - \sqrt{2 \pi t} e^{2t} {\rm erfc} (\sqrt{2t}) \ ,
\end{equation}
where ${\rm erfc}$ denotes the complementary error function and let
\begin{equation}
\label{10.1}
q_0 (t) = 
2 c \sqrt{t} e^{-t} I_0  + 
c^2 e^{-2t} \left ( 2 J_0 - I_0 - I_0^2 \right ) ,
\end{equation}
The theorem below provides an accurate representation of $F$ of \eqref{3}, \eqref{4}.
\begin{Theorem}
\label{Thm1}Let $F_0$ be defined by
\begin{equation}
\label{6.0}
F_0 (x) =
\left\{ \begin{array}{cc}
\frac{x^2}{2} + x^4 P \left (\frac{2}{5} x \right ) \  \text{for $x\in [0,\frac52]$} \\
   a x + b + \sqrt{\frac{a}{2t (x)}} q_0 ( t(x)) \  \text{for $x > \frac52$}
\end{array}\right.
\end{equation}
Then, there is a unique triple $(a,b,c)$ close to $(a_0,b_0,c_0)= \left ( \frac{3221}{1946}, 
-\frac{2763}{1765}, \frac{377}{1613} \right )$ in the sense that $(a,b,c)\in\mathcal{S}$ where
\begin{equation}
\label{8}
\mathcal{S} = \left \{ (a, b, c)\in\RR^3: \sqrt{(a-a_0)^2+\frac{1}{4} (b-b_0)^2+
\frac{1}{4} (c-c_0)^2 } \le \rho_0 := 5 \times 10^{-5} \right \}
\end{equation}
with the property that $F_0$ is a representation of the actual solution $F$ 
to the initial value problem (\ref{3})-(\ref{4}) within small errors. More precisely,
\begin{equation}
\label{5}
F(x) = F_0 (x) + E (x)  \ ,
\end{equation}
where the error term $E$ satisfies
\begin{equation}
\label{errBoundsE}
\| E^{\prime \prime} \|_{\infty} \le 3.5 \times 10^{-6} \ ,  
\| {E}^{\prime} \|_{\infty} \le 4.5 \times 10^{-6}  \ ,  
\| {E} \|_{\infty} \le 4 \times 10^{-6}   \ \text{on $[0,\frac52]$}
\end{equation}
and for $x\ge \frac52$
\begin{multline}\label{eq15}
\Big |  {E}  \Big | 
\le 1.69 \times 10^{-5}  t^{-2} e^{-3 t}   \ ,    
\Big | \frac{d}{dx}  E  \Big | 
\le 9.20 \times 10^{-5} t^{-3/2} e^{-3 t}   \\
\Big | \frac{d^2}{dx^2}  E  \Big | 
\le 5.02 \times 10^{-4} t^{-1} e^{-3 t}   
\end{multline}
\end{Theorem}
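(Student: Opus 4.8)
The plan is to recast the problem as a fixed-point equation for the error $E=F-F_0$ and to close it by a contraction argument in norms tailored to the two regions. With $\mathcal{N}(F)=F'''+FF''$, the linearization at $F_0$ is $LE=E'''+F_0E''+F_0''E$, the residual is $\delta=\mathcal{N}(F_0)$, and since $FF''-F_0F_0''-(F_0E''+F_0''E)=EE''$ the genuinely nonlinear part is just $\mathcal{N}_1(E)=EE''$. Thus $\mathcal{N}(F_0+E)=0$ is equivalent to $LE=-\delta-EE''$ with $E(0)=E'(0)=E''(0)=0$, the homogeneous data being available because the polynomial branch of $F_0$ meets (\ref{4}) exactly. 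The equation for $E$ is effectively two-point in nature: IVP data at $0$ and decay at $\infty$. In the far field $F_0\approx ax+b$, so $L$ has two slowly varying homogeneous modes ($\sim 2ax+b$ and $\sim a$) and one exponentially small mode with $E''\sim e^{-t}$; the three parameters are precisely the asymptotic slope $a$, intercept $b$, and leading exponential amplitude $c$. Before touching $E$ I would fix $(a,b,c)$ by requiring the two branches of $F_0$ in (\ref{6.0}) to agree together with their first two derivatives at $x=\tfrac52$, which makes $F_0\in C^2[0,\infty)$ and aligns the erfc-ansatz with the true far-field expansion through order $c^2e^{-2t}$. I would prove by a quantitative Newton--Kantorovich estimate, starting from $(a_0,b_0,c_0)$, that this $3\times 3$ system has a unique root inside $\mathcal{S}$.

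Next I would bound the residual. On $[0,\tfrac52]$, substituting $F_0=\tfrac{x^2}{2}+x^4P(\tfrac25 x)$ turns $\delta$ into an explicit polynomial whose sup-norm I would control by estimating its coefficients, the normalization in (\ref{6}) and the choice of $p_0,\dots,p_{12}$ being engineered so that this is tiny. On $[\tfrac52,\infty)$ I would insert the ansatz (\ref{10.2})--(\ref{10.1}); using the differential recurrences satisfied by $I_0$ and $J_0$ that encode the asymptotics of $\mathrm{erfc}$, I expect the contributions of order $c\,e^{-t}$ and $c^2e^{-2t}$ to cancel by the construction of $q_0$, leaving $|\delta|\lesssim t^{-k}e^{-3t}$. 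This $e^{-3t}$ decay is exactly the weight on the right of (\ref{eq15}), so the far-field residual estimate must be carried out with sharp constants.

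The heart of the argument is the inversion of $L$ with explicit bounds, done by weighted energy estimates rather than an exact Green's function. Setting $v=E''$ reduces $LE=g$ to the first-order relation $v'+F_0v=g-F_0''E$, whose solving kernel is $\exp\!\big(-\int_s^x F_0(\tau)\,d\tau\big)$; since $F_0\ge 0$ and $F_0(\tau)\approx a\tau+b$ this decays like $e^{-(t(x)-t(s))}$, the source of the Gaussian/error-function rate. On $[0,\tfrac52]$ I would integrate from the origin and read off sup-norm bounds, using positivity of $F_0$ to keep the energy under control. On $[\tfrac52,\infty)$ I would instead select the decaying solution by integrating from infinity, $v(x)=-\int_x^\infty e^{-\int_x^s F_0}(g-F_0''E)\,ds$ and $E(x)=\int_x^\infty (s-x)v(s)\,ds$: this suppresses the two slow homogeneous modes and, because $F_0''$ is exponentially small, propagates a forcing of size $e^{-3t}$ into bounds of the same rate. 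Multiplying by the weights $t^{-2}e^{-3t}$, $t^{-3/2}e^{-3t}$, $t^{-1}e^{-3t}$ for $E,E',E''$ and integrating yields $\|L^{-1}g\|\le C\|g\|$ with an explicit $C$ in the product norm combining (\ref{errBoundsE}) on $[0,\tfrac52]$ with the weighted norms of (\ref{eq15}) beyond, the $C^2$-matching of the first paragraph guaranteeing that the near- and far-field inverses are consistent at $x=\tfrac52$.

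Finally I would close the contraction. The map $\Phi(E)=L^{-1}(-\delta-EE'')$ sends a ball of radius $R$ comparable to the claimed bounds into itself, since $\|L^{-1}\delta\|$ is small by the residual estimates while $\|L^{-1}(EE'')\|\le CR^2\ll R$; it contracts because $EE''-\tilde E\tilde E''=E(E-\tilde E)''+(E-\tilde E)\tilde E''$ gives $\|\Phi(E)-\Phi(\tilde E)\|\le CR\,\|E-\tilde E\|$ with $CR<1$ for $R$ of the stated size. The Banach fixed-point theorem then produces the unique $E$, and propagating the explicit constants through the weighted estimates delivers (\ref{errBoundsE}) and (\ref{eq15}). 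I expect the main obstacle to be the third step: obtaining the linear bound with constants sharp enough to reach the quoted numerical values, in particular reconciling the IVP data at $0$ with the decay-selected far-field inverse through the choice of $(a,b,c)$ and recovering the precise powers of $t$ multiplying $e^{-3t}$; the parameter determination and the residual estimates, though computation-heavy, are conceptually routine by comparison.
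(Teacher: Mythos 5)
Your overall architecture---small residual on each region, inversion of the linearization $LE=E'''+F_0E''+F_0''E$ by energy/integrating-factor bounds with forward integration near the origin and decay-selected (backward) integration in the far field, then a contraction for $E$ with quadratic nonlinearity $EE''$---is essentially the paper's. The genuine gap is in how you determine $(a,b,c)$. You propose to fix the triple \emph{before} solving for $E$, by requiring the two branches of $F_0$ in (\ref{6.0}) to agree in $C^2$ at $x=\tfrac52$, and you later lean on this to assert that ``the $C^2$-matching \dots guarantee[s] that the near- and far-field inverses are consistent at $x=\tfrac52$.'' That step fails. Once $(a,b,c)$ are frozen, the problem for $E$ is over-determined: a third-order equation cannot generically accommodate three zero Cauchy data at $x=0$ \emph{and} the decay condition at infinity that your far-field inverse builds in; the linearization has two non-decaying modes ($\sim 1$ and $\sim x$) which the forward-integrated solution will generically excite. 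Concretely, the $E$ obtained on $[0,\tfrac52]$ from the IVP and the $E$ obtained on $[\tfrac52,\infty)$ by integrating from infinity will disagree at $x=\tfrac52$ by an amount of the order of $E(\tfrac52)$ itself---the same order as the bounds you are trying to prove---and continuity of $F_0$ does nothing to remove this mismatch. The paper resolves it by matching the \emph{actual} solutions, $F_0+E$ from the left and $ax+b+\sqrt{a/(2t)}\,(q_0+\mathcal{E})$ from the right, together with their first two derivatives; this yields the $3\times3$ fixed-point system (\ref{16})--(\ref{17}) for $(a,b,c)$, whose right-hand side depends on $E(\tfrac52)$, $E'(\tfrac52)$, $E''(\tfrac52)$ and on the far-field error $h$, and which is solved by a contraction with Jacobian norm $\le 0.764$ (Proposition \ref{Prop4}). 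Tellingly, the triple so obtained does \emph{not} make $F_0$ continuous at $x=\tfrac52$ (see Remark \ref{remCont} and (\ref{eqabcCont})); your $C^2$-matching determines a different nearby triple, so even the uniqueness assertion would be about the wrong system.

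A secondary, quantitative point: to reach constants like $3.5\times10^{-6}$ in (\ref{errBoundsE}) you cannot run a single Gronwall/energy estimate over all of $[0,\tfrac52]$ with a crude bound on $\exp(\int F_0)$; the paper subdivides $[0,\tfrac52]$ at points tied to sign changes of $F_0''-2F_0+1$ and propagates explicit bounds $M$, $M_j$ for the Green's operator and fundamental solutions interval by interval. Likewise, in the far field the coupling term $F_0''E$ is not negligible relative to the target bound (the coefficient $q_0''/(2t)$ etc.\ is only $O(e^{-t})$, not smaller than the contraction margin), which is why the paper works with the auxiliary combination $h=e^{t}(\sqrt{t}\,\mathcal{E}''-\mathcal{E}'/(2\sqrt t)+\mathcal{E}/(2t^{3/2}))$ in a weighted norm $\sup t e^{2t}|h|$ rather than with $E''$ directly. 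You correctly flag the sharp-constant issue as the main obstacle, but the matching mechanism above is the step that, as written, would not go through at all.
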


\begin{Remark}{\rm 
Certainly, $F$ is smooth since it is is an actual 
solution of \eqref{3},\eqref{4}, which exists on $[0, \infty )$ 
and is unique, see \cite{Weyl}. 
However, the particular choice $(a, b, c) \in \mathcal{S}$ in Theorem \ref{Thm1} needed in order for 
$F=F_0+{E}$ to solve (\ref{3})-(\ref{4}) does not ensure continuity of
the approximate solution $F_0$ at $x=\frac{5}{2}$. Nonetheless, 
if  $F_0,F_0',F_0''$ are needed to be continuous, this can be achieved 
by a slightly different choice of $(a,b,c)\in \mathcal{S}$  (see Remark 
\ref{remCont}), namely
\begin{equation}
\label{eqabcCont}
(a, b, c) =  
\left (1.6551904561499...,-1.565439826457..,0.233728727537...
\right ). 
\end{equation}
}

\rm Note also that \eqref{eq15} implies 
not only small absolute errors 
(that, in the far field hold even for the approximation of $F^{\prime \prime}$ 
by zero) but also
very small {\em relative} errors for $x>5/2$.
\end{Remark}
\begin{Definition}{\rm 
\label{Def1}
Let $a_{l} = a_0 -\rho_0$, $a_r = a_0+\rho_0$, $b_l = b_0 - 2 \rho_0$,
$b_r = b_0 + 2 \rho_0$, $c_l = c_0 - 2 \rho_0$ and $c_r = c_0 + 2 \rho_0$. We see that $(a,b,c) \in \mathcal{S}$, implies that $a \in [a_l, a_r]$, $b \in [b_l, b_r]$,
$c \in (c_l, c_r)$.
We define $t_m = \frac{a}{2} \left ( \frac{5}{2}
+ \frac{b}{a} \right )^2 $ and note that $x \in [\frac{5}{2}, \infty )$ corresponds to
$t \in [t_m, \infty )$ and  $t_m \in 
\left ( \frac{a_l}{2} 
\left [ \frac{5}{2} + \frac{b_l}{a_l} \right ]^2 , 
\frac{a_r}{2} 
\left [ \frac{5}{2} + \frac{b_r}{a_r} \right ]^2 \right )  
=: \left (t_{m,l}, t_{m,r} \right ) = \left ( 1.998859\cdots, 1.999438 \cdots \right )$. 
}\end{Definition}

\begin{Remark}{\rm 
The error bounds proved for ${E}$ in Theorem \ref{Thm1}
are likely a 10 fold over-estimate. Comparison with the 
numerically calculated $F$ suggests that
${|}F-F_0 {|}$, ${|}F^{\prime}-F_0^\prime {|}$ and $
{|}F^{\prime \prime} - F_0^{\prime \prime} {|}$ in $\left [0, \frac{5}{2} \right ]$
are at most $2 \times 10^{-7}, 2 \times 10^{-7}$ and $5 \times 10^{-7}$ respectively. Using the nonrigorous bounds on ${E}$ and
its derivatives reduces the 
$\rho_0$ in the definition of $\mathcal{S}$ 
from $5 \times 10^{-5}$ to
$1.4 \times 10^{-5}$. 
It is thus likely that
$(a, b, c) \approx (a_0, b_0, c_0)$ with
five (rather than the proven four) digits accuracy.}
\end{Remark}

The proof of Theorem \ref{Thm1} rests on the following three propositions, proved later in the paper.
\begin{Proposition}
\label{Prop2}
The error term 
$E(x) = F(x) - F_0 (x)$  
verifies the equation
\begin{equation}
\label{eqE}
\mathcal{L} [E] := 
E^{\prime \prime} - F_0 E^{\prime \prime} - F_0^{\prime \prime} E = - F_0^{\prime \prime \prime} - F_0 F_0^{\prime \prime} 
- E E^{\prime \prime}
\end{equation}
\begin{equation}
\label{eqEBC}
E(0) = 0 = E^{\prime} (0) = E^{\prime \prime} (0) 
\end{equation}
and satisfies the bounds 
(\ref{errBoundsE}) on $I = \left [ 0, \frac{5}{2} \right ]$
\end{Proposition}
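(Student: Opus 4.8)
The first two assertions are immediate. Substituting $F=F_0+E$ into \eqref{3} and collecting on the left the terms linear in $E$ produces \eqref{eqE}, with the residual $\delta:=F_0'''+F_0F_0''$ and the quadratic remainder $-EE''$ appearing on the right; the initial conditions \eqref{eqEBC} follow by reading off $F_0(0)=0$, $F_0'(0)=0$, $F_0''(0)=1$ from the polynomial branch $F_0(x)=\tfrac{x^2}{2}+x^4P(\tfrac25 x)$ of \eqref{6.0} and comparing with \eqref{4}. I would stress at the outset that on $I=[0,\tfrac52]$ the function $F_0$, and hence $\delta$, are completely explicit and independent of $(a,b,c)$, so that $E=F-F_0$ is a single fixed function; the entire content of the Proposition is therefore the quantitative bound \eqref{errBoundsE}.

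To obtain these bounds I would recast \eqref{eqE}--\eqref{eqEBC} as a fixed-point problem. Letting $\mathcal L^{-1}$ denote the solution operator of the linear initial value problem $\mathcal L[u]=g$, $u(0)=u'(0)=u''(0)=0$ on $I$, the equation becomes $E=\mathcal T[E]:=\mathcal L^{-1}\!\left[-\delta-EE''\right]$. The first ingredient is a sharp bound on the residual: since $\delta$ is an explicit polynomial-type function on $I$, the coefficients $p_0,\dots,p_{12}$ in \eqref{6} having been selected (by the orthogonal-polynomial construction underlying the method) precisely to make $\delta$ small in norm, one shows $\|\delta\|_\infty$ and the relevant $L^2$ norm of $\delta$ are of order $10^{-6}$ by a rigorous evaluation, e.g.\ interval arithmetic applied to $\delta$ and its derivative over $I$.

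The second, and main, ingredient is a quantitative bound on $\mathcal L^{-1}$ obtained by energy methods rather than through an explicit Green's function. Given $\mathcal L[u]=g$ with zero initial data, I would multiply by $u''$ and integrate over $[0,x]$: the term $\int_0^x u'''u''=\tfrac12\,u''(x)^2$ is an exact boundary term, $\int_0^x F_0\,(u'')^2$ is nonnegative once one verifies $F_0\ge0$ on $I$, and the cross term $\int_0^x F_0''\,u\,u''$ is absorbed using the Poincar\'e-type inequalities $\|u\|_{L^2},\|u'\|_{L^2}\le C\|u''\|_{L^2}$, which hold because $u(0)=u'(0)=0$. Together with a Gronwall/absorption step this yields $\|u''\|_{L^2}\le\kappa^{-1}\|g\|_{L^2}$ with an explicit $\kappa>0$, after which $\|u''\|_\infty$, and then $\|u'\|_\infty$ and $\|u\|_\infty$, follow by integrating from $0$ and Cauchy--Schwarz.

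Finally I would close the argument by contraction on the ball $\mathcal B\subset C^2(I)$ cut out by \eqref{errBoundsE}. The linear estimate applied to $g=-\delta$ gives a displacement controlled by $\|\delta\|$, which is small by design; the nonlinear contribution $\mathcal L^{-1}[-EE'']$ is quadratic, so that $\|EE''\|\lesssim\|E\|_\infty\|E''\|_\infty$ makes $\mathcal T$ map $\mathcal B$ into itself and act as a contraction once the radius is of order $10^{-6}$. Its unique fixed point coincides with $E=F-F_0$ by uniqueness for \eqref{3}--\eqref{4}, and membership in $\mathcal B$ is exactly \eqref{errBoundsE}. The delicate point throughout is quantitative rather than structural: the coercive weight $F_0\sim x^2/2$ degenerates near $x=0$, so the Poincar\'e constants and $\|F_0''\|_\infty$ must be estimated very sharply (perhaps with a weighted multiplier) to keep the operator-norm constant $\kappa^{-1}$ small enough that $\kappa^{-1}\|\delta\|$ falls below the target $3.5\times10^{-6}$ rather than merely $O(10^{-5})$.
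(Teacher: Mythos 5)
Your overall strategy coincides with the paper's: bound the residual $R=F_0'''+F_0F_0''$ sharply, invert the linearization $\mathcal L$ with zero Cauchy data by energy estimates rather than an explicit Green's function, and close with a contraction whose radius is of order $\|R\|$. The derivation of \eqref{eqE}--\eqref{eqEBC} and the identification of the fixed point with $E=F-F_0$ are exactly as in the paper. The difference lies in the execution of the energy estimate, and it is there that your sketch has a real quantitative gap. The paper does not work on all of $I$ at once: it subdivides $I$ into $[0,x_c]$, $[x_c,2]$, $[2,\tfrac52]$ (with $x_c$ the zero of $F_0''-2F_0+1$), propagates the Cauchy data across the pieces, and runs Gronwall on $D(\tau)=\sup_{[x_l,\tau]}|u''|^2$ while \emph{keeping} the negative term $-2\int F_0(u'')^2$ and combining it with the cross term by Cauchy--Schwarz, so that the Gronwall weight is $F_0''-2F_0+1+\tfrac14(x-x_l)^4F_0''$ truncated at zero; the factor $\tfrac14(x-x_l)^4$ (the analogue of your Poincar\'e constant) is reset at each subdivision point. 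This yields resolvent norms $M\le 3.03,\,0.825,\,0.708$ on the three pieces and an overall amplification of about $5$ from $\|R\|_\infty\le 6.73\times 10^{-7}$ to the claimed $3.5\times 10^{-6}$. Your version discards the helpful $-2\int F_0(u'')^2$ term and uses a global Poincar\'e inequality on $[0,\tfrac52]$, where the constant in $\|u\|_{L^2}\le C\|u''\|_{L^2}$ is of order $(5/2)^2/2\approx 3$ while $\|F_0''\|_\infty=1$; the cross term then cannot be absorbed outright, the Gronwall exponential acquires contributions of size $\int_0^{5/2}\bigl(F_0''+C^2(F_0'')^2\bigr)$, and it is far from clear that the resulting $\kappa^{-1}$ stays small enough. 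You flag this yourself, but since the quantitative closure is, as you correctly observe, the entire content of the Proposition, the argument is incomplete without either the subdivision or some substitute device. (A minor additional point: the paper's rigorous bounds only give $F_0\ge -5\times 10^{-10}$ near $x=0$, not $F_0\ge 0$, though this is harmless.)
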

\begin{Proposition}
\label{Prop3}
For given $(a, b, c)$ with $a > 0$, $ |c| < \frac{1}{4}$, in the domain 
$x \ge - \frac{b}{a} + \sqrt{\frac{2T}{a}}$, for $T \ge 1.99$, which
corresponds to 
$t \ge T \ge 1.99$, there exists unique solution to (\ref{3}) in the form 
\begin{equation}
\label{0.0}
F(x) = a x + b + \sqrt{\frac{a}{2 t(x)}} q (t(x))
\end{equation}
that satisfies the condition $\lim_{t \rightarrow \infty} \frac{q (t)}{\sqrt{t}} \rightarrow 0$.
Furthermore, 
\begin{equation}
q(t) = q_0 (t) + \mathcal{E} (t) 
\end{equation}
where $\mathcal{E}$ is small and satisfies the following error bounds:
\begin{equation}
\Big | \mathcal{E} (t) \Big | 
\le 1.6667 \times 10^{-4}  \frac{e^{-3 t}}{9 t^{3/2}}     
\end{equation}
\begin{equation}
\Big | \mathcal{E}^{\prime} (t) - 
\frac{1}{2t} \mathcal{E} (t) \Big |
\le 1.6667 \times 10^{-4}  \frac{e^{-3 t}}{3 t^{3/2}}     
\end{equation}
\begin{equation}
\Big | \sqrt{t} \mathcal{E}^{\prime \prime} (t) - 
\frac{1}{\sqrt{t}} \mathcal{E}^\prime (t) + \frac{1}{2 t^{3/2}} \mathcal{E} (t) \Big | 
\le 1.6667 \times 10^{-4}  t^{-1} e^{-3 t}    
\end{equation}
\end{Proposition}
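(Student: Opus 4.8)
The plan is to reduce the far-field problem to a contractive fixed-point problem for $\mathcal{E}=q-q_0$. First I would change the independent variable from $x$ to $t=\frac a2(x+b/a)^2$ and insert the ansatz \eqref{0.0}, $F=ax+b+\sqrt{a/(2t)}\,q$, into \eqref{3}. Writing $\xi=x+b/a=\sqrt{2t/a}$ so that $\sqrt{a/(2t)}=1/\xi$ and $dt/dx=a\xi$, a direct computation turns \eqref{3} into a third-order ODE for $q(t)$ of the form
\[
L_3[q]:=2t\,q'''+2t\,q''-q'+\tfrac{3}{2t}q'+\tfrac1t q-\tfrac{3}{2t^2}q=N_3[q],\qquad N_3[q]=-q q''+\tfrac{1}{2t}q q'-\tfrac{1}{2t^2}q^2,
\]
where primes on $q$ and on $\mathcal{E}$ denote $d/dt$. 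One checks that $L_3$ has the two explicit growing homogeneous solutions $q=\sqrt t$ and $q=t$ (corresponding to infinitesimal changes of the intercept $b$ and of the slope $a$ of the linear asymptote) together with a third, exponentially decaying solution built from $I_0$. The side condition $\lim_{t\to\infty}q/\sqrt t=0$ annihilates both growing modes, while the prescribed value of $c$ fixes the amplitude of the surviving decaying mode; this is what will make the solution in the class \eqref{0.0} unique, the three parameters $(a,b,c)$ being precisely the three constants of the third-order equation.

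Next I would record that $q_0$ is exactly the two-term small-$c$ approximation of the decaying solution: the term $2c\sqrt t\,e^{-t}I_0$ solves the linearization $L_3[\cdot]=0$ in the decaying class, while the $c^2$ term solves $L_3[\cdot]=-N_3[\,2c\sqrt t e^{-t}I_0\,]$ at quadratic order, which is why the error functions $I_0,J_0$ (with exponents $e^{-t}$ and $e^{-2t}$) appear. Consequently the residual $R:=N_3[q_0]-L_3[q_0]$ is cubic in $c$ and of size $O(e^{-3t})$, matching the $e^{-3t}$ in the claimed bounds. Substituting $q=q_0+\mathcal{E}$ and keeping the full linear part on the left yields $\mathcal{L}_{\mathrm{lin}}[\mathcal E]=R+Q[\mathcal E]$, where $\mathcal{L}_{\mathrm{lin}}=L_3-N_3'[q_0]$ and $Q[\mathcal E]$ is quadratic in $\mathcal E$. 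I would then invert $\mathcal{L}_{\mathrm{lin}}$ by the Green's function $\mathcal G$ selected by the decay condition (variation of parameters against $\sqrt t$, $t$, and the decaying homogeneous solution, with the limits of integration chosen so that the two growing modes are never excited), giving the equivalent integral equation $\mathcal E=\mathcal T[\mathcal E]:=\mathcal G[R]+\mathcal G[Q[\mathcal E]]$.

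The core of the argument is a contraction-mapping argument on $t\ge T\ge 1.99$ in a Banach space whose weighted norm simultaneously controls the three quantities in the statement, namely $\mathcal E$, the combination $\mathcal E'-\frac1{2t}\mathcal E=\sqrt t\,\frac{d}{dt}(\mathcal E/\sqrt t)$, and $\sqrt t\,\mathcal E''-\frac1{\sqrt t}\mathcal E'+\frac1{2t^{3/2}}\mathcal E$, against weights $t^{3/2}e^{3t}$, $t^{3/2}e^{3t}$ and $t\,e^{3t}$ (with the prefactors $9,3,1$ of the statement). These particular combinations are natural because they are precisely the rescaled $x$-derivatives of the error $\sqrt{a/(2t)}\,\mathcal E$ in $F$, so bounds on them convert directly into the far-field estimates needed for \eqref{eq15}. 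I would show that $\mathcal G$ maps functions bounded by $e^{-3t}$ times powers of $t$ into the same class with a controlled constant, that $\|\mathcal G[R]\|_*$ is below the target $1.6667\times10^{-4}$, and that $\mathcal E\mapsto\mathcal G[Q[\mathcal E]]$ is Lipschitz with a small constant on a ball of that radius, the smallness coming both from $|c|<\frac14$ and from the $e^{-t}$ gain in every nonlinear interaction. Existence, uniqueness in the class $q/\sqrt t\to0$, and the three bounds then all follow from the contraction mapping theorem.

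The main obstacle I anticipate is the quantitative control of the Green's function $\mathcal G$ in this weighted norm: one must obtain explicit, rigorous estimates on the three homogeneous solutions and on the kernel built from them, in particular sharp bounds and monotonicity for $I_0$, $J_0$ and $e^t\mathrm{erfc}(\sqrt t)$ on $[T,\infty)$, and then verify that the resulting numerical constant stays below $1.6667\times10^{-4}$ while the Lipschitz constant of the nonlinear map stays below $1$. This is where the energy-type bounds for the Green's function advertised in the abstract replace the more delicate pointwise analysis, and where essentially all of the arithmetic lives.
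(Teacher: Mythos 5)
Your overall architecture (quasi-solution $q_0$, residual of size $O(c^3e^{-3t})$, contraction in an exponentially weighted space) matches the paper's, and your identification of $\sqrt t$ and $t$ as the growing homogeneous modes and of $2c\sqrt t e^{-t}I_0$ as the order-$\xi$ decaying mode is correct. The crucial technical step, however, is done quite differently. You propose to invert the third-order linearized operator by variation of parameters against a full fundamental system and to contract in a three-component weighted norm. The paper explicitly avoids this: it introduces the single auxiliary scalar $h(t)=e^{t}\bigl(\sqrt t\,\mathcal E''-\tfrac{1}{2\sqrt t}\mathcal E'+\tfrac{1}{2t^{3/2}}\mathcal E\bigr)$, which is (up to the constant $\sqrt2 a^{3/2}$ and the factor $e^{-t}$) the second $x$-derivative of the error in $F$. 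This quantity satisfies a \emph{first-order} integro-differential equation, $h'=-\tfrac{q_0e^t}{2t}h+e^tB\mathcal E-\tfrac{\mathcal E h}{2t}-t^{1/2}e^tR$ with $\mathcal E$ recovered from $h$ by two explicit integrations from $\infty$; the contraction is run on $h$ alone in the norm $\sup_t te^{2t}|h|$, and all three stated bounds then drop out by integration (the constants $1,3,9$ are exactly the successive integration gains). What the paper's route buys is that no fundamental system and no Wronskian kernel ever need to be constructed or estimated; what yours buys is nothing extra here, and it costs you explicit control of the third (non-elementary, Borel-summed) homogeneous solution and of the resulting kernel.

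Two points in your sketch need repair before it would close. First, you propose to invert $\mathcal L_{\mathrm{lin}}=L_3-N_3'[q_0]$ using the fundamental system $\{\sqrt t,\,t,\,\text{decaying solution}\}$ of $L_3$; these operators do not have the same homogeneous solutions, since $q_0$ is not an exact solution. You must either keep the $q_0$-dependent linear terms on the right-hand side as $O(ce^{-t})$ perturbations (which is in effect what the paper's $h$-equation does, via the terms in $q_0$ and $B$) or construct the fundamental system of the perturbed operator. Second, uniqueness in the full class $q/\sqrt t\to 0$ does not follow from the contraction alone, which only gives uniqueness inside the chosen ball. The paper supplies the missing a priori step (its Lemmas \ref{LemN1} and \ref{LemN2}): any solution of the $g$-equation with $g\to0$ automatically has Gaussian decay and belongs to a one-parameter family indexed by $C=\sqrt2a^{3/2}c$, so that every solution in the admissible class is forced into the contraction ball. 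You need some version of this a priori decay argument to justify the uniqueness claim in the Proposition.
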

\begin{Proposition}
\label{Prop4}
There exists a unique triple $(a, b, c) \in \mathcal{S}$ so that the functions 
in the previous two propositions: 
$F_0 (x) + E(x)$ for $x \le \frac{5}{2}$ 
and $ax+b + \sqrt{\frac{a}{2t(x)}} q (t(x)) $ for $x \ge \frac{5}{2}$ 
and their first two derivatives
agree at $x=\frac{5}{2}$.
\end{Proposition}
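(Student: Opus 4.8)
The plan is to read the three matching conditions as a system of three scalar equations for the three unknowns $(a,b,c)$ and to solve it by a quantitative inverse function theorem. The key preliminary observation is that the left-hand data are fixed: on $[0,\frac52]$ the true solution $F=F_0+E$ is determined by the initial value problem (\ref{3})--(\ref{4}) independently of $(a,b,c)$, so the three targets $\phi_i:=F^{(i)}(\frac52)$, $i=0,1,2$, are constants, known explicitly through the polynomial part of $F_0$ up to the errors $|E^{(i)}(\frac52)|$ controlled by (\ref{errBoundsE}). On the right, Proposition \ref{Prop3} applies at $x=\frac52$ for every $(a,b,c)\in\mathcal{S}$, since Definition \ref{Def1} guarantees $t_m\ge 1.99$ and $|c|<\frac14$ there; it produces $q=q_0+\mathcal{E}$ with $\mathcal{E}$ small. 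Writing the matching conditions as
\begin{equation}
\Phi_i(a,b,c)+\rho_i(a,b,c)=\phi_i,\qquad i=0,1,2,
\end{equation}
where $\Phi_i$ is the explicit contribution of $ax+b+\sqrt{a/(2t)}\,q_0(t)$ and its first two $x$-derivatives at $x=\frac52$, and $\rho_i$ gathers the small $\mathcal{E}$-terms (together with the constant $E^{(i)}(\frac52)$ moved from the left), the assertion is that the residual $N:=\Phi+\rho-\phi$ has a unique zero in $\mathcal{S}$.

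To set up the contraction I would differentiate the explicit representation using $dt/dx=\sqrt{2at}$ and the derivatives of $I_0,J_0,q_0$ from (\ref{10.2})--(\ref{10.1}); evaluated at $x=\frac52$ this gives $\Phi$ as a closed expression in $(a,b,c)$ built from $t_m\approx 2$ and $\mathrm{erfc}(\sqrt{t_m})$. I would then compute the Jacobian $M:=D\Phi(a_0,b_0,c_0)$ by verified (interval) arithmetic, check that it is invertible, and bound $\|M^{-1}\|$ in the weighted norm defining $\mathcal{S}$; the weights $1,\frac14,\frac14$ and the asymmetric radii $\rho_0,2\rho_0,2\rho_0$ of Definition \ref{Def1} are tuned precisely to balance the differing sensitivities of the matching equations in $a$ versus $b,c$. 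A fixed point of the Newton-type map $\mathcal{T}(v):=v-M^{-1}N(v)$ is then exactly a zero of $N$.

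It remains to verify the two hypotheses of the contraction mapping theorem on $\mathcal{S}$. For the self-mapping property I would bound $\|M^{-1}N(v_0)\|$ at $v_0=(a_0,b_0,c_0)$: the explicit residual $\Phi(v_0)-\phi$ is small because $(a_0,b_0,c_0)$ was chosen so that the leading representations nearly agree at $x=\frac52$ (checked by interval arithmetic), while $\rho(v_0)$ is small by (\ref{errBoundsE}) and the $\mathcal{E}$-bounds of Proposition \ref{Prop3}. For the contraction estimate I would bound $\|D\mathcal{T}\|=\|M^{-1}(M-DN)\|$ on $\mathcal{S}$, with $M-DN=(M-D\Phi)+D\rho$: the first term is controlled by the variation of $D\Phi$ over the small ball $\mathcal{S}$ (second-order bounds on the erfc-expressions), and $D\rho$ by the derivative bound on $\mathcal{E}$ in Proposition \ref{Prop3} (the $E$-contribution to $\rho$ is constant in $(a,b,c)$ and hence drops out). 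Showing both quantities small enough that $\mathcal{T}:\mathcal{S}\to\mathcal{S}$ with Lipschitz constant below one yields the unique fixed point, i.e.\ the unique matching triple.

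The main obstacle I anticipate is the rigorous control of $M$ and its variation: one must differentiate the error-function representation through the $(a,b)$-dependence of $t_m$, evaluate at $t_m\approx2$ with verified bounds, and confirm that $M$ is well enough conditioned---and varies little enough over $\mathcal{S}$---for the contraction constant to stay below one against the accumulated small terms coming from $E$ and $\mathcal{E}$. The adapted norm on $\mathcal{S}$ is exactly what makes this balance achievable.
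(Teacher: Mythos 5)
Your proposal is correct in outline and rests on the same basic mechanism as the paper---a contraction mapping argument for the three matching conditions on the ball $\mathcal{S}$, with the left-hand data $F^{(i)}(\tfrac52)$ fixed and all $(a,b,c)$-dependence confined to the right-hand representation---but the contraction is set up differently. You precondition with the inverse Jacobian $M^{-1}$ of the explicit part at the center and iterate the Newton-type map $\mathcal{T}(v)=v-M^{-1}N(v)$. The paper instead rearranges the three conditions algebraically so that each is solved for ``its own'' variable, $a=N_1(a,b,c)$, $b=N_2(a,b,c)$, $c=N_3(a,b,c)$ (equations \eqref{16}--\eqref{17}), and then passes to the weighted variables $\mathbf{A}=(a,\tfrac b2,\tfrac c2)$ so that the Jacobian of $\mathbf{N}$ has Euclidean norm at most $0.764<1$ on $\mathcal{S}_A$ (Lemma \ref{lemMatch}); the weights play the role your adapted norm plays. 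The paper's version never inverts a matrix---it only needs upper bounds on the nine partials $\partial N_i/\partial(a,b,c)$---at the price of exploiting the near-diagonal structure of the system; your version is more robust to poor conditioning but requires verified bounds on $\|M^{-1}\|$ and on the variation of $D\Phi$ over $\mathcal{S}$, which is heavier interval arithmetic. One gap you should close: Proposition \ref{Prop3} as stated bounds $\mathcal{E}$ and its $t$-derivatives only. That controls the $(a,b)$-dependence of the small terms (which enters through $t_m$), but the $c$-dependence of $\mathcal{E}$, equivalently of $h$, is \emph{not} covered by those bounds; the paper has to prove separate estimates on $\partial_c h_0$, $\partial_c \mathcal{E}$ and $\partial_c h$ (Lemmas \ref{lemn2}, \ref{lemn1} and \ref{lemhcb}) and feeds them into \eqref{21.2}, \eqref{21.7} and \eqref{21.12}. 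Without an analogous bound your estimate of $D\rho$, and hence of the Lipschitz constant of $\mathcal{T}$, is incomplete.
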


\noindent{\bf The proof} of Theorem \ref{Thm1} follows from Propositions 
\ref{Prop2}-\ref{Prop4} in the following manner:
Proposition \ref{Prop2} implies $F (x) =F_0 (x) + E(x)$ satisfies 
\eqref{3}-\eqref{4} for $x \in I$; we note that $F_0 (0)=0 = F_0^\prime (0)$ 
and $F_0^{\prime\prime} (0)=1$. 

Proposition \ref{Prop3} implies $ F(x) = a x + b + \sqrt{\frac{a}{2 t(x)}} 
\left [ q_0 (t(x) ) + \mathcal{E} (t (x)) \right ]$
satisfies \eqref{3} in a range of $x$ that includes $[\frac{5}{2}, \infty)$
when $(a, b, c) \in \mathcal{S}$. Further, Proposition \ref{Prop4} ensures
that this is the same solution of the ODE \eqref{3} as the one in Proposition
\ref{Prop2}.
Identifying $F_0 (x) $ and $E(x)$ in 
Theorem \ref{Thm1} in this range of $x$ with 
$ a x + b + \sqrt{\frac{a}{2 t(x)}} q_0 (t(x))$ and  
$\sqrt{\frac{a}{2 t(x)}} \mathcal{E} (t(x))$, respectively,
and relating $x$-derivatives to $t$ derivatives,
the error bounds for $E$, $E^\prime$ and $E^{\prime \prime}$ follow from the ones
given for $\mathcal{E}$ in Proposition \ref{Prop4} for
$(a,b,c) \in \mathcal{S}$. The proofs of Propositions \ref{Prop2}-\ref{Prop4} are given
in Sections \ref{S1}-\ref{S3} respectively.

\section{Solution in the interval $ I = [0, \frac{5}{2}]$ 
and proof of Proposition \ref{Prop2}}\label{S1}

The ansatz for $F_0$ in the compact set $I$ is obtained simply by projecting an empirically obtained high accuracy approximate solution
on the subspace spanned by the first few Chebyshev polynomials. 
More precisely, to
avoid estimating derivatives of an 
approximation, which are not well-controlled,  we 
project instead the approximate 
third derivative $F^{\prime\prime\prime}=-F^{\prime\prime}F$ on 
the interval\footnote{Chebyshev polynomial
approximation is on $[-1, 1]$ interval, but a linear scaling and shift
of independent variables can accommodate any finite interval}
$I=[0, \frac{5}{2}]$. The rigorous control of the errors of the {\em integrals} 
of $F'''$ is a much simpler task, and this is the way we prove Proposition \ref{Prop2}.
For a given polynomial degree, a 
Chebyshev polynomial approximation of a function is known to be, 
typically, the 
most accurate in polynomial representation, in the sense of $L^\infty$.

We seek to control the error term $E$ in (\ref{5}) by first estimating the remainder
\begin{equation}
\label{12.0}
R(x) = F_0^{\prime \prime\prime} (x) + F_0 (x) F_0^{\prime \prime} (x),
\end{equation} 
which will be shown to be small ($ \le 0.673 \times 10^{-6}$). 
Then, we invert the principal part
of the linear part of the equation for the error term $E$ by using initial
conditions to obtain a nonlinear integral equation. 
The smallness of $R$ and careful bounds on the resolvent allow for using a contraction
mapping argument to obtain the sharp
estimates for $E$ and its derivatives stated in Proposition \ref{Prop2}.

\subsection{Estimating size of remainder $R (x)$ for $ x\in I $}\label{R(x)}
Since $P$ is a polynomial of degree twelve,
$R(x)$ is a polynomial of degree 30. 
We estimate $R$ in $I$ in the following manner.
We break
up the interval into subintervals 
$\left \{ [x_{j-1}, x_{j}] \right \}_{j=1}^{14}$
with $x_0=0$ and $x_{14} = \frac{5}{2}$, while 
$\left \{x_j \right \}_{j=1}^{13}$ is given by
$$ \left \{ 0.0625, 0.125, 0.25, 0.375, 0.50, 0.75, 1.0, x_c, 1.5, 1.75, 2.0, 2.25, 2.40 \right \}$$
where $x_c = 1.322040$.\footnote{As will be found later, it is convenient to choose one
of the subdivision points $x_c$ to be approximately, to the number of digits quoted,
the value of $x$ where 
$F_0^{\prime \prime} (x) - 2 F_0^\prime (x) +1$ changes sign.}
The intervals were chosen based 
on how rapidly the polynomial $R(x)$ varies locally.

We re-expand $R(x)$ as polynomial in the scaled variable   
$\tau$, where $x=\frac{1}{2} \left (x_j
+ x_{j-1} \right ) + \frac{1}{2} \left ( x_j-x_{j-1} \right ) \tau$.  
and write
$$ R(x) = P_3^{(j)}(\tau) + \sum_{k=4}^{30} a_k^{(j)} \tau^k
$$
and determine the maximum $M_j$ and minimum $m_j$ 
of the third degree polynomial $P_{3}^{(j)} (t)$
for $\tau \in [-1,1]$ (using simple calculus). 
We estimate the $l^1$ error
on the remaining coefficients:  
$$ E_{R}^{(j)} \equiv \sum_{k=4}^{30} \Big | a_k^{(j)} \Big | $$  
It follows that in the $j$-subinterval we have
$$ m_j - E_R^{(j)} \le R(x) 
\le M_j + E_R^{(j)} $$
The maximum and minimum over any union of subintervals
is found simply taking $\min$ and $\max$ of $m_j - E_{R}^{(j)}$ and $M_j + E_{R}^{(j)}$ 
over the the indices $j$ for 
subintervals involved.   
This elementary though tedious calculation\footnote{The maximum and minimum found
through analysis described here is found to be consistent with a 
numerical plot of the graph of $R(x)$, as must be the case. 
The calculations can be conveniently done with a computer algebra program, as they only involve operations with rational numbers.} yields
\begin{multline}
\label{12.1}
-3.22 \times 10^{-7} \le R (x) \le 2.505 
\times 10^{-7} ~{\rm for} ~x \in [0, x_c ] \\
 4.6 \times 10^{-8}  \le  R (x) \le 4.06 \times 10^{-7}  
~{\rm for} ~x \in [x_c, 2.0] \\
 2.78 
\times 10^{-7}  
\le  R(x)  \le 6.73 \times 10^{-7}  ~{\rm for} ~x \in [2.0, 2.5]  
\end{multline}  
We note that the remainder is at most
$6.73 \times 10^{-7}$ in absolute value in the interval $I$.
In a similar manner of finding maximum and minimum, 
bounds for the polynomials $F_0 (x)$, $F_0^\prime$ and $F_0^{\prime \prime} (x)$
may also be found.
For $x \in \left [ 0 , \frac{1}{8} \right ]$,
\begin{multline}
\label{12.1.0}
-5 \times 10^{-10} \le F_0 (x) \le 0.008
\ ,  -8 \times 10^{-12} \le F_0^{\prime} (x) \le  
0.13   \ , \\  
0.99 \le F_0^{\prime \prime} (x)  \le 1 + 2 \times 10^{-9} 
\end{multline}
while for $x \in \left [ \frac{1}{8}, \frac{5}{2} \right ]$,
\begin{multline}
\label{12.1.1}
0.03 \le F_0 (x) \le 2.59   \ ,     0.12  \le F_0^\prime (x)  \le 1.7  \ ,
0.09 \le  F_0^{\prime \prime} (x) \le  1
\end{multline}
\subsection{Properties of some functions used in the sequel}\label{S2.2}
Based on the calculations above, one can also conclude that  
$F_0^{\prime \prime} - 2 F_0 + 1$ and $2 F_0^{\prime \prime} - 2 F_0 $
have only one zero in the interval $I$ in the following manner. 
Note that derivatives of these 
functions are
$ - F_0 F_0^{\prime \prime} - 2 F_0^\prime + R < 0$  and 
$- 2 F_0 F_0^{\prime \prime} - 2 F_0^\prime + 2 R < 0$ respectively
in $ \left [ \frac{1}{8}, \frac{5}{2} \right ]$, where $R$ has been
estimated in the previous subsection.
From the bounds in the interval $\left [0, \frac{1}{8} \right ]$,
it is clear that  
$F_0^{\prime \prime} - 2 F_0 + 1$ and $2 F_0^{\prime \prime} - 2 F_0 $
are positive in $\left [0, \frac{1}{8} \right ]$. Thus, we conclude that
 $F_0^{\prime \prime} - 2 F_0 + 1$ and 
$2 F_0^{\prime \prime} - 2 F_0 $
can have at most one zero in $I$. The values of $F_0^{\prime \prime} - 2 F_0 
+ 1 $ at $x_c = 1.322040$
and $1.322041$ have opposite signs, implying that there is 
a unique zero in $I$ in between
two numbers (recalling that  the derivative is negative). 
Similarly, we conclude there is a unique zero of 
$2 F_0^{\prime \prime} - 2 F_0 $
 between $1.2314283$ and $1.2314284$.
Similar arguments show that $F_0^{\prime \prime} (x) - 2 F_0 (x)$ only has one zero in
$I$ at $ x = 0.9399325 \cdots < x_c $.

\subsection{Green's function estimate for $x \in [x_l, x_r] $}

Consider now the problem of solving the linear generally inhomogeneous equation
\begin{equation}
\label{12.0.0}
\mathcal{L} [ \phi ] :=\phi^{\prime\prime \prime} (x) + F_0 (x) \phi^{\prime \prime} (x) + F_0^{\prime \prime} (x)  \phi (x) = r(x)
\end{equation}
over a typical subinterval $[x_l, x_r] \subset I$,
with initial conditions $\phi (x_l)$, $\phi^\prime (x_l)$ and $\phi^{\prime \prime} (x_l)$ known.
The solution of this inhomogeneous equation is 
given by the standard variation of parameter formula:
\begin{equation}
\label{12.0.1}
\phi (x) = \sum_{j=1}^3 \phi^{(j-1)} (x_l) \Phi_j (x)  + \sum_{j=1}^3 \Phi_j (x) \int_{x_l}^x \Psi_j (t) r(t) dt 
\end{equation}
where $\left \{ \Phi_j \right \}_{j=1}^3 $ form a 
fundamental set of solutions to $\mathcal{L} \phi=0$ 
and $\left \{\Psi_j (x) \right \}_{j=1}^3$ are elements 
of the inverse of the fundamental matrix
constructed from the $\Phi_j$ and their derivatives. The 
precise expressions are unimportant in the ensuing: 
we only need their smoothness in  $x$.
It also follows from the  properties of
$\Phi_j$ and $\Psi_j$ \footnote{In particular, 
$\sum_{j=1}^3 \Phi_j (x) \Psi_j (x) =0$, 
$\sum_{j=1}^3 \Phi_j^\prime (x) \Psi_j (x)=0$}
that
\begin{equation}
\label{12.3.1}
\phi^{\prime \prime} (x) = 
\sum_{j=1}^3 \phi^{(j-1)} (x_l) \Phi_j^{\prime \prime} (x) 
+ \sum_{j=1}^3 \Phi_j^{\prime \prime} (x) \int_{x_l}^x \Psi_j (t) r(t) dt \ , 
\end{equation}
It is useful to write (\ref{12.3.1}) in the following abstract form
\begin{equation}
\label{12.4}
\phi^{\prime \prime} (x) = \sum_{j=1}^3 \phi^{(j-1)} (x_l) \Phi_j^{\prime \prime} (x) + \mathcal{G} \left [ r \right ] (x)      
\end{equation}
where from general properties of fundamental
matrix and its inverse for the linear ODEs with smooth 
(in this case polynomial) coefficients 
$\mathcal{G}$ is a bounded linear operator 
on $C([x_l, x_r])$; 
denote its norm by $M$,
\begin{equation}
  \label{eq:defM}
  M=\|\mathcal{G}\|
\end{equation}
 Then, on the interval $[x_l, x_r]$ we have,
\begin{equation}
\label{12.5}
\| \phi^{\prime \prime} \|_\infty \le  \sum_{j=1}^\infty M_j \Big | \phi^{(j-1)} (x_l) \Big | 
+ M \| r \|_{\infty};\ M_j
= \sup_{x \in [x_l,x_r]} 
\Big | \Phi_j^{\prime \prime} (x) \Big | 
\end{equation}
We will now estimate
$M_j$ for $j=1..3$ and
$M$ indirectly, 
using ``energy'' bounds. Because of 
linearity of the problem, for the purposes
of determining these bounds, it is enough to separately 
consider the cases (i)--(iii), when
$r =0$, $\phi^{(k-1)} (x_l) =0$ for $1 \le k \ne j \le 3$, 
$\phi^{(j-1)} (x_l) = 1$  respectively, and, finally, (iv) when
$\phi^{(k-1)} (x_l) =0$ for $k =1,..3$ and $r(t) \ne 0$. 

For all cases (i)-(iv), 
we return to the ODE
\begin{equation}
\label{12.13}
\phi^{\prime \prime \prime} + F_0 \phi^{\prime \prime} + F_0^{\prime \prime} \phi = r 
\end{equation}
Multiplying  by $2 \phi^{\prime \prime}$, 
integrating from $x_l$ to $x$
and using initial conditions, it follows that
\begin{equation}
\label{12.14}
\left ( \phi^{\prime \prime} (x) \right )^2 = \left ( \phi^{\prime \prime} (x_l) \right )^2 
- \int_{x_l}^x \left \{ 2 F_0 (y) \left ( \phi^{\prime \prime} (y) \right )^2  
+
2 F_0^{\prime \prime} (y) \phi^{\prime \prime} (y) \phi (y) - 
2 \phi^{\prime \prime} (y) r(y) \right \} dy,
\end{equation}     
We note further that, given  $\phi (x_l)$ and $\phi^\prime (x_l)$,
$\phi(x)$ is determined from $\phi^{\prime \prime} (x)$
and the relation
\begin{equation}
\label{12.15}
{\tilde \phi} (x) := 
\phi(x) - \phi (x_l) - (x-x_l) \phi^\prime (x_l) = \int_{x_l}^x (x-y) \phi^{\prime \prime} (y) dy
\end{equation}
Using (\ref{12.15}) in (\ref{12.14}), it follows that 
\begin{multline}
\label{12.14.1}
\left ( \phi^{\prime \prime} (x) \right )^2 = \left ( \phi^{\prime \prime} (x_l) \right )^2 
- \int_{x_l}^x 2 F_0^{\prime \prime} (y) 
\left [ \phi(x_l) + (y-x_l) \phi^{\prime} (x_l) \right ] \phi^{\prime \prime} (y) dy \\
- \int_{x_l}^x \left \{ 2 F_0 (y) \left ( \phi^{\prime \prime} (y) \right )^2  
+
2 F_0^{\prime \prime} (y) \phi^{\prime \prime} (y) {\tilde \phi} (y) - 
2 \phi^{\prime \prime} (y) r(y) \right \} dy,
\end{multline}     

\subsection{Case (i): Determination of $M_1$}

In this case, we set 
$\phi(x_l)=1$, $\phi^{\prime} (x_l) = \phi^{\prime \prime} (x_l) = 0 = r(x)$ 
in (\ref{12.14.1}) to obtain (using $F_0^{\prime \prime} > 0$ 
and Cauchy-Schwartz):
\begin{multline}
\label{12.14.2}
\left ( \phi^{\prime \prime} (x) \right )^2 \le  
\int_{x_l}^x F_0^{\prime \prime} (y) dy +
\int_{x_l}^x F_0^{\prime \prime} (y) {\tilde \phi}^2 (y) dy \\
+\int_{x_l}^x \left ( \phi^{\prime \prime} (y) \right )^2 \left \{ 2 F_0^{\prime \prime} (y) 
- 2 F_0 (y) \right \} dy  
\end{multline}
It is convenient to define, for $ x_l \le \tau \le x_r $,
\begin{equation}
\label{12.17}
D(\tau) = \sup_{x \in [x_l, \tau]} \Big | \phi^{\prime \prime} \Big |^2 (x) 
\end{equation}
From (\ref{12.15}) and the definition of $D$ we get 
\begin{equation}
\label{12.17.0}
\| \phi^{\prime \prime} \|_\infty \le \sqrt{D (x_r)} \ ,
\Big | {\tilde \phi} (x) \Big |  
\le \frac{(x-x_l)^2}{2} \sqrt{D(x)}  \ , ~~~\left ( \phi^{\prime \prime} (x) \right )^2 \le D (x);\  \text{in }[x_l, x_r], 
\end{equation}
Using \eqref{12.17.0} and (\ref{12.14.2}) we see that
\begin{multline}
\label{12.14.3}
\left ( \phi^{\prime \prime} (x) \right )^2 \le  
\left ( \int_{x_l}^x F_0^{\prime \prime} (y) dy \right ) +
\int_{x_l}^x F_0^{\prime \prime} (y) \frac{(y-x_l)^4}{4} D (y) dy \\
+\int_{x_l}^x \left ( \phi^{\prime \prime} (y) \right )^2 \left \{ 2 F_0^{\prime \prime} (y) 
- 2 F_0 (y) \right \} dy  
\end{multline}
Define now
\begin{multline}
\label{12.14.6.0}
Q_1 (x) = F_0^{\prime \prime} (x) \left ( 2 + \frac{(x-x_l)^4}{4} \right ) - 2 F_0 (x)   
~~{\rm if}~~2 F_0^{\prime \prime} (x) - 2 F_0 (x) > 0   \\
\text{and} 
\ Q_1 (x) = \frac{(x-x_l)^4}{4} F_0^{\prime \prime} (x) ~~{\rm if}~~2 F_0^{\prime \prime} (x) - 2 F_0 (x) \le 0   
\end{multline} 
Equations
(\ref{12.17.0}) 
and (\ref{12.14.3}) imply the following inequality for
for $0 \le x_l \le x \le \tau \le x_r$
\begin{equation}
\label{12.14.6}
\left ( \phi^{\prime \prime} (x) \right )^2  
\le \int_{x_l}^{\tau} F_0^{\prime \prime} (y) dy 
+ \int_{x_l}^\tau D(y) Q_1(y) dy 
\end{equation} 
Since the right side is independent of $x$, it follows that 
\begin{equation}
\label{12.14.7} 
D(\tau) \le 
\int_{x_l}^{\tau} F_0^{\prime \prime} (y) dy 
+\int_{x_l}^\tau D(y) Q_1 (y) dy 
\end{equation}
From Gronwall's lemma, it follows that
\begin{equation}
\label{12.4.8}
D(\tau) \le 
\left ( \int_{x_l}^{\tau} F_0^{\prime \prime} (y) dy \right ) \exp \left [ \int_{x_l}^\tau Q_1 (y) dy \right ] 
\end{equation}
Evaluating \eqref{12.4.8} at $\tau=x_r$ immediately implies
\begin{equation}
\label{12.14.9}
M_1 \le 
\left ( F_0^\prime (x_r) - F_0^{\prime} (x_l) \right )^{1/2}
\exp \left [ \frac{1}{2} \int_{x_l}^{x_r} Q_1 (y) dy \right ] 
\end{equation}

\subsection{Case (ii): Determination of $M_2$}

In this case, we set 
$\phi^\prime (x_l)=1$, $\phi (x_l) = 0 = \phi^{\prime \prime} (x_l) = 0 = r(x)$ 
in (\ref{12.14.1}) to obtain by Cauchy-Schwartz
\begin{multline}
\label{12.14.10.0}
\left ( \phi^{\prime \prime} (x) \right )^2  
\le \left ( \int_{x_l}^x F_0^{\prime \prime} (y) (y-x_l)^2 dy \right ) 
+\int_{x_l}^x F_0^{\prime \prime} (y) {\tilde \phi}^2 (y) dy \\
+\int_{x_l}^x \left ( \phi^{\prime \prime} (y) \right )^2 \left \{ 2 F_0^{\prime \prime} (y) 
- 2 F_0 (y) \right \} dy  
\end{multline}
Again introducing $D(x) $ and $Q_1 (x)$ as in case (i), we obtain the
inequality
\begin{equation}
\label{12.14.11} 
D(\tau) \le 
\int_{x_l}^{\tau} (y-x_l)^2 F_0^{\prime \prime} (y) dy 
+\int_{x_l}^\tau D(y) Q_1 (y) dy 
\end{equation}
and therefore, it follows from Gronwall's Lemma that 
\begin{equation}
\label{12.14.12.0}
M_2 \le 
\left ( \int_{x_l}^{x_r} (y-x_l)^2 F_0^{\prime \prime} (y) dy\right )^{1/2}
\exp \left [ \frac{1}{2} \int_{x_l}^{x_r} Q_1 (y) dy \right ] 
\end{equation}

\subsection{Case (iii): Determination of $M_3$}
In this case, we set 
$\phi^{\prime \prime} (x_l)=1$, $\phi (x_l) = 0 = \phi^{\prime} (x_l) = 0 = r(x)$ 
in (\ref{12.14.1}) to obtain 
\begin{equation}
\label{12.14.10}
\left ( \phi^{\prime \prime} (x) \right )^2  
\le 1 
+\int_{x_l}^x F_0^{\prime \prime} (y) {\tilde \phi}^2 (y) dy 
+\int_{x_l}^x \left ( \phi^{\prime \prime} (y) \right )^2 \left \{ 2 F_0^{\prime \prime} (y) 
- 2 F_0 (y) \right \} dy  
\end{equation}
Once again introducing $D$ as in case (i) and defining
\begin{multline}
\label{12.14.0}
Q_2 (x) = \left ( 1 + \frac{(x-x_l)^4}{4} \right ) F_0^{\prime \prime} (x) - 2 F_0 (x)  \ , 
~~{\rm if} ~~F_0^{\prime \prime} (x) - 2 F_0 (x) > 0  \\
\text{and} \ Q_2 (x) = \frac{(x-x_l)^4}{4} F_0^{\prime \prime} (x) \ , 
~~{\rm if} ~~F_0^{\prime \prime} (x) - 2 F_0 (x) \le 0  \ ,
\end{multline}
(\ref{12.14.10}) imply
\begin{equation}
\label{12.14.12} 
D(\tau) \le 1  
+\int_{x_l}^\tau D(y) Q_2 (y) dy 
\end{equation}
Gronwall's Lemma and definition of $D$ implies
\begin{equation}
\label{12.14.14}
M_3 \le 
\exp \left [ \frac{1}{2} \int_{x_l}^{x_r} Q_2 (y) dy \right ] 
\end{equation}

\subsection{Case (iv): Determination of $M = \| \mathcal{G} \|$}
With 
$\phi (x_l) = 0 = \phi^{\prime} (x_l) =\phi^{\prime \prime} (x_l) = 0 $, 
(\ref{12.14.1}) implies by Cauchy-Schwartz
\begin{multline}
\label{12.14.15} 
\left ( \phi^{\prime \prime} (x) \right )^2  
\le  
\int_{x_l}^x r^2 (y) dy 
+\int_{x_l}^x F_0^{\prime \prime} (y) {\tilde \phi}^2 (y) dy \\
+\int_{x_l}^x \left ( \phi^{\prime \prime} (y) \right )^2 \left \{ F_0^{\prime \prime} (y) 
- 2 F_0 (y) + 1 \right \} dy  
\end{multline}
We define $D$  as in (i) and also 
\begin{multline}
\label{12.14.10.0.0}
Q(x) = F_0^{\prime \prime} (x) - 2 F_0 (x) + 1 + \frac{(x-x_l)^4}{4} F_0^{\prime \prime} (x) ~~{\rm if} ~F_0^{\prime \prime} - 2 F_0 +1 \ge 0 
\\
Q(x) = \frac{(x-x_l)^4}{4} F_0^{\prime \prime} (x) ~~{\rm if} ~F_0^{\prime \prime} - 2 F_0 +1 < 0 
\end{multline}
The inequality 
\begin{equation}
\label{12.14.16} 
D(\tau) \le \int_{x_l}^\tau r^2 (y) dy 
+\int_{x_l}^\tau D(y) Q (y) dy 
\end{equation}
follows from 
\eqref{12.14.15},  
implying from Gronwall's Lemma
\begin{equation}
\label{12.14.17}
\sqrt{D(x_r)} \le \| r \|_\infty  
\left ( x_r - x_l \right )^{1/2}
\exp \left [ \frac{1}{2} \int_{x_l}^{x_r} Q(y) dy \right ] 
\end{equation}
implying
\begin{equation}
\label{12.14.18}
M   
\le \left ( x_r - x_l \right )^{1/2}
\exp \left [ \frac{1}{2} \int_{x_l}^{x_r} Q(y) dy \right ] 
\end{equation}

\subsection{Existence of $F$; error bounds for $x \in [x_l, x_r] \subset I$}
Consider the decomposition
\begin{equation}
\label{12.1.0.0}
F(x) = F_0 (x) + E(x) 
\end{equation}
We seek to find error estimates for $E(x)$ and its first two derivatives for
$x \in I$. For this purpose we break up $I$ into
a number of subintervals. Note that for the first
subinterval $x_l=0$, where
$E(x_l)=0=E^\prime (x_l) = E^{\prime \prime} (x_l) $. 
Consider 
a typical subinterval $I = [x_l, x_r] $ where the bounds on 
$E(x_l)$, $E^\prime (x_l)$ and $E^{\prime \prime} (x_l)$ 
on earlier subintervals have been already obtained. 
The equation for $E(x)$ on $[x_l, x_r]$ is
\begin{equation}
\label{12.2}
\mathcal{L} [ E ] :=E^{\prime\prime \prime} (x) + F_0 (x) E^{\prime \prime} (x) + F_0^{\prime \prime} (x) 
E(x) = 
-E (x) E^{\prime \prime} (x) - R(x) 
\end{equation}
Inverting $\mathcal{L}$ as described in previous subsection leads to
the following integral equation:
\begin{equation}
\label{12.3}
E^{\prime \prime} (x) = \sum_{j=1}^3 E^{(j-1)} (x_l) \Phi_j^{\prime \prime} (x)   
-\mathcal{G} \left [ R \right ] (x) +
\mathcal{G} \left [ E E^{\prime \prime} \right ] (x) 
=: \mathcal{N} \left [ E^{\prime \prime} \right ] (x)     
\end{equation}
and where  $E$ is given by
\begin{equation}
\label{12.8.0}
E (x) - E(x_l) - (x-x_l) E^\prime (x_l) =: {\tilde E} (x) =  \int_{x_l}^x (x-t) E^{\prime \prime} (t) dt 
\end{equation}
Note that \eqref{12.8.0} implies
\begin{equation}
\label{12.9}
\| {\tilde E} \|_\infty \le \frac{(x_r-x_l)^2}{2} \| E^{\prime \prime} \|_\infty 
\end{equation}
We prove the following Lemma that, 
once some bounds are satisfied, ensures the existence,  
uniqueness and smoothness of the solution 
$E$ of \eqref{12.2} and 
provides estimates of $E$, $E^\prime$ and $E^{\prime\prime}$.
\begin{Lemma}\label{Lemma5}{\rm Assume that for some  $\epsilon>0$ we have
\begin{equation}
\label{12.9.2}
M \left ( \Big | E (x_l) \Big | + (x_r-x_l) \Big | E^\prime (x_l) \Big | \right ) (1+\epsilon)
+ \frac{1}{2} (x_r-x_l)^2 M B_0 (1+\epsilon)^2 < \epsilon   \ ,
\end{equation}
\begin{equation}
\label{12.9.3}
M \left ( \Big | E (x_l) \Big | + (x_r-x_l) \Big | E^\prime (x_l) \Big | \right ) 
+ (x_r-x_l)^2 M B_0 (1+\epsilon) < 1 \ , 
\end{equation}
where
\begin{equation}
\label{12.9.0.0}
B_0 = M \| R \|_\infty + \sum_{j=1}^3 M_j \Big | E^{(j)} (x_l) \Big | .
\end{equation}
\label{lem1}
Then, there exists a unique solution $E^{\prime \prime} $ of \eqref{12.3} 
in a ball of radius $ B_0 (1+\epsilon) $ in the sup norm in $C([x_l, x_r])$.

Under these assumptions, $E$ is in $C^3([x_l,x_r])$} and satisfies 
(\ref{12.2}) with initial conditions $E^{(j)} (x_l)$, $j=0,1,2$
and
\begin{multline}
\label{12.12}
\| E^{\prime \prime \prime} \|_\infty \le 
\| F_0 \|_\infty (1+\epsilon) B_0
+ \| F_0^{\prime \prime} \|_\infty \left ( \Big | E (x_l) \Big |
+ (x_r - x_l ) \Big | E^\prime (x_l) \Big | \right ) \\ 
+ \frac{1}{2} \| F_0^{\prime \prime} \|_\infty (x_r-x_l)^2 B_0 (1+\epsilon)  
+ \frac{1}{2} (x_r-x_l)^2 B_0^2 (1+\epsilon)^2 +
\| R \|_\infty  
\end{multline}
\end{Lemma}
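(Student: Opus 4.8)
The plan is to recast \eqref{12.3} as a fixed point equation for $g:=E^{\prime\prime}$ in the Banach space $C([x_l,x_r])$ with the sup norm, and to apply the contraction mapping theorem on the closed ball $\mathcal{B}=\{g:\|g\|_\infty\le B_0(1+\epsilon)\}$. Given $g$, one reconstructs $E$ through \eqref{12.8.0}, writing $E=A+\tilde E_g$, where $A(x)=E(x_l)+(x-x_l)E^\prime(x_l)$ depends only on the fixed initial data, with $\|A\|_\infty\le|E(x_l)|+(x_r-x_l)|E^\prime(x_l)|$, and $\tilde E_g(x)=\int_{x_l}^x(x-t)g(t)\,dt$ depends linearly on $g$, with $\|\tilde E_g\|_\infty\le\tfrac{(x_r-x_l)^2}{2}\|g\|_\infty$ by \eqref{12.9}. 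In this notation $\mathcal{N}[g]=\mathcal{N}[0]+\mathcal{G}[(A+\tilde E_g)\,g]$, where $\|\mathcal{N}[0]\|_\infty\le B_0$ by \eqref{12.9.0.0} and $\|\mathcal{G}\|=M$.

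First I would verify the self-mapping property. For $g\in\mathcal{B}$,
$$\|\mathcal{N}[g]\|_\infty\le B_0+M\Big(\|A\|_\infty+\tfrac{(x_r-x_l)^2}{2}B_0(1+\epsilon)\Big)B_0(1+\epsilon),$$
and, subtracting $B_0$ and dividing by $B_0$, this is $\le B_0(1+\epsilon)$ precisely when \eqref{12.9.2} holds; hence $\mathcal{N}(\mathcal{B})\subset\mathcal{B}$. Next, the contraction estimate rests on the bilinear splitting
$$(A+\tilde E_{g_1})g_1-(A+\tilde E_{g_2})g_2=(A+\tilde E_{g_1})(g_1-g_2)+\tilde E_{g_1-g_2}\,g_2,$$
which uses the linearity $\tilde E_{g_1}-\tilde E_{g_2}=\tilde E_{g_1-g_2}$. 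Applying $\mathcal{G}$ and the bounds above for $g_1,g_2\in\mathcal{B}$ gives $\|\mathcal{N}[g_1]-\mathcal{N}[g_2]\|_\infty\le M(\|A\|_\infty+(x_r-x_l)^2B_0(1+\epsilon))\|g_1-g_2\|_\infty$, whose Lipschitz constant is exactly the left side of \eqref{12.9.3} and therefore $<1$. The Banach fixed point theorem then yields a unique $g=E^{\prime\prime}\in\mathcal{B}$; continuity of $g$ is automatic since $\Phi_j^{\prime\prime}$ are smooth and $\mathcal{G}$ maps $C([x_l,x_r])$ into itself.

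Finally, for smoothness and the derivative bound, reconstructing $E$ from $g$ via \eqref{12.8.0} gives $E\in C^2$, and the variation-of-parameters construction \eqref{12.0.1}--\eqref{12.3.1} guarantees that this $E$ is the unique solution of the ODE \eqref{12.2} with the prescribed data $E^{(j)}(x_l)$. Solving \eqref{12.2} for $E^{\prime\prime\prime}=-F_0E^{\prime\prime}-F_0^{\prime\prime}E-EE^{\prime\prime}-R$ exhibits $E^{\prime\prime\prime}$ as continuous, so $E\in C^3$. Taking sup norms of this identity and inserting $\|E^{\prime\prime}\|_\infty\le B_0(1+\epsilon)$, the split bound $\|E\|_\infty\le\|A\|_\infty+\tfrac{(x_r-x_l)^2}{2}B_0(1+\epsilon)$, and the bound on $\|R\|_\infty$ yields \eqref{12.12}.

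The main obstacle I anticipate is the bookkeeping of the quadratic nonlinearity $EE^{\prime\prime}$, in which both factors depend on the unknown $g$: the contraction step needs the bilinear decomposition displayed above, and the self-mapping step requires carefully tracking the powers of $(1+\epsilon)$ so that conditions \eqref{12.9.2} and \eqref{12.9.3} emerge in exactly the stated form. A secondary point that must be checked, rather than merely asserted, is the equivalence between the fixed point of the integral equation \eqref{12.3} and a genuine $C^3$ solution of the third-order ODE \eqref{12.2} with the given initial conditions.
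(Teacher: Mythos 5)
Your proposal is correct and follows essentially the same route as the paper: a contraction mapping argument for $E^{\prime\prime}$ in the ball of radius $B_0(1+\epsilon)$ of $C([x_l,x_r])$, with the self-map condition coming from \eqref{12.9.2}, the Lipschitz condition from \eqref{12.9.3} via the bilinear splitting of $EE^{\prime\prime}$, and the $C^3$ regularity and bound \eqref{12.12} obtained by bootstrapping from the ODE \eqref{12.2}. Your write-up merely makes explicit (via the decomposition $E=A+\tilde E_g$) the estimates the paper records as \eqref{12.8} and \eqref{12.8.1}.
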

\begin{proof}
Since $\mathcal{G}$ is the Green's function of a smooth linear ODE, $\mathcal{G}$ maps $C([x_l,x_r])$ into itself; 
the same, clearly, holds for $\mathcal{N}$.
From the definitions of $M$ in \eqref{eq:defM}, and of $M_j$,  $j=1,2,3$  in  \eqref{12.5} (whose bounds will be obtained using
\eqref{12.14.9}, \eqref{12.14.12.0}, \eqref{12.14.14} and \eqref{12.14.18})  it follows that
\begin{multline}
\label{12.8}
\| \mathcal{N} \left [ E^{\prime \prime} \right ] 
\|_\infty
\le M \| R \|_\infty + 
\sum_{j=1}^3 M_j \Big | E^{(j-1)}(x_l) \Big | \\ 
+ M \left ( | E (x_l) | 
+ |x_r-x_l| | E^\prime (x_l) | \right ) 
\| E^{\prime \prime} \|_\infty
+\frac{(x_r-x_l)^2}{2} M \| E^{\prime \prime} \|_\infty^2    
\end{multline}
and 
\begin{multline}\label{12.8.1}
\| \mathcal{N} \left [ E^{\prime \prime} \right ] 
- \mathcal{N} \left [ {\hat E}^{\prime \prime} \right ] 
\|_\infty \le  
M \left ( | E (x_l) | 
+ |x_r-x_l| | E^\prime (x_l) | \right ) 
\| E^{\prime \prime} - {\hat E}^{\prime \prime} \|_\infty \\
+\frac{(x_r-x_l)^2}{2} 
M \left ( \| E^{\prime \prime} \| + \| {\hat E}^{\prime \prime} \|_\infty    
\right ) 
\| E^{\prime \prime} -{\hat E}^{\prime \prime} \|_\infty    
\end{multline}
Using \eqref{12.9}, \eqref{12.9.2} and \eqref{12.9.0.0} in \eqref{12.8}  and \eqref{12.8.1} we see that $\mathcal{N}$ is contractive in a ball of radius $(1+\epsilon) B_0$ in $C([x_l,x_r])$, implying existence and uniqueness of a solution to \eqref{12.8.0}.  Clearly,  (\ref{12.3}) is equivalent
to (\ref{12.2}); from (\ref{12.2}) it
follows that $E^{\prime \prime \prime}$ is also continuous. Now,   $E^{\prime \prime \prime}$ is easily estimated from (\ref{12.2}) in terms
of lower order derivatives, and the result follows. 
\end{proof}

\subsection{Determining $E$ using Lemma \ref{Lemma5}}
In this section, we break the interval $[0,5/2]$ in a suitable way and show that Lemma \ref{Lemma5} applies in all subintervals. 
The choice of subintervals is $\mathcal{I}_1 = [0, x_c ]$
$\mathcal{I}_2 = [x_c, 2]$, 
$\mathcal{I}_3 = \left [2 , \frac{5}{2} \right ]$, where $x_c = 1.322040$ is, within number of digits quoted, 
the zero of $F_0^{\prime \prime} - 2 F_0 +1 $ obtained in \S\ref{S2.2}.

\subsubsection{Error estimates on $\mathcal{I}_1$}
On $\mathcal{I}_1 $, using  \eqref{6.0}, it is easily checked that 
\begin{equation}
 M \le 3.03
\end{equation}
while by \eqref{12.1} we have $\| R \|_{\infty, \mathcal{I}_1} 
\le 3.22 \times 10^{-7}$
Since the initial conditions on this interval are  $E(x_l)=E^\prime (x_l)=E^{\prime \prime} (x_l)=0$, the $M_j$ do not contribute to \eqref{12.8}, \eqref{12.8.1},  and \eqref{12.9.0.0} implies
\begin{equation}
B_0 \le 0.9757 \times 10^{-6}  
\end{equation}
The conditions (\ref{12.9.2}) and (\ref{12.9.3})  
are
satisfied for $\epsilon = 3 \times 10^{-6}$ so that Lemma \ref{lem1} implies
\begin{equation}
\label{I1E2b}
\| E^{\prime \prime} \|_{\infty, \mathcal{I}_1} \le 0.976 \times 10^{-6} 
\end{equation}
On integration it follows that
\begin{equation}
\label{I1Eb}
\| E^\prime \|_{\infty, \mathcal{I}_1} \le 1.29 \times 10^{-6}  \ ,
\| E \|_{\infty, \mathcal{I}_1 } \le 0.853 \times 10^{-6} 
\end{equation}

\subsubsection{Error estimates on $\mathcal{I}_2$}

On $\mathcal{I}_2 = [x_c, 2] $, 
using \eqref{6.0}, it is easily checked that 
\begin{equation}
 M_1 \le 0.572 \ , M_2 = 0.199 \ , M_3 \le 1.01, M \le 0.825  . 
\end{equation}
Since at $x_l$, we can apply (\ref{I1E2b})
and (\ref{I1Eb}) to bound
$E(x_l), E^\prime (x_l), E^{\prime \prime} (x_l)$,
using $\| R \|_{\infty, \mathcal{I}_2} \le 4.06 \times 10^{-7}$, 
\eqref{12.9.0.0} implies 
\begin{equation}
B_0 \le 2.0653 \times 10^{-6}
\end{equation}
Lemma \ref{lem1} applies if
$\epsilon = 2\times 10^{-6}$.
Therefore, the solution $E$ exists and is unique on $\mathcal{I}_2$ and
\begin{equation}
\label{E2I2b}
\| E^{\prime \prime} \|_{\infty, \mathcal{I}_2}\le 2.07 \times 10^{-6}
\end{equation}
By integration and using the bounds on $E(x_l),E'(x_l)$ obtained in the previous interval, see  \eqref{I1Eb},  we get
\begin{equation}
\label{EI2b}
\| E^\prime \|_{\infty, \mathcal{I}_2}  \le 
2.7 \times 10^{-6}  \ ,
\| E \|_{\infty, \mathcal{I}_2 } \le 2.21 \times 10^{-6}
\end{equation}

\subsubsection{Error estimates on $\mathcal{I}_3 = \left [ 2,  2.5 \right ]$}

On $\mathcal{I}_3 = [ 2, 2.5 ]$, using \eqref{6.0} we get 
\begin{equation}
 M_1 \le 0.3 \ , M_2 \le  0.0744  \ , M_3 \le 1.01 \ , M \le 0.708  
\end{equation}
while $\| R \|_{\infty, [2,  2.5]} \le 0.673 \times 10^{-6}$   by \eqref{12.1}.
Proceeding as in the previous interval we get
\begin{equation}
B_0 \le 3.431 \times 10^{-6}
\end{equation}
Here Lemma \ref{lem1} applies with
$\epsilon = 3 \times 10^{-6}$ and thus
\begin{equation}
\label{E2I3b}
\| E^{\prime \prime} \|_{\infty, \mathcal{I}_3} \le 3.44 \times 10^{-6}   
\end{equation}
Proceeding as in the previous intervals, we get
\begin{equation}
\label{EI3b}
\| E^\prime \|_{\infty, \mathcal{I}_3} \le 
4.42 \times 10^{-6} \ , 
\| E \|_{\infty, \mathcal{I}_3}  
\le 3.99 \times 10^{-6}
\end{equation}

\subsection{End of proof of Proposition \ref{Prop2}} 
In the previous subsection we have shown that Lemma \ref{lem1} applies on 
each of the intervals $\mathcal{I}_j$ entailing 
the existence and uniqueness of a solution
$E$ of the initial value problem
(\ref{eqE})-(\ref{eqEBC}) 
over the interval $I $. The same calculations show the $L^\infty$
norms of $E^{(j)}$ satisfy the bounds in Theorem \ref{Thm1}.

\section{Solution in $t \ge T \ge 1.99 $ for $|c| < \frac{1}{4} $, $a > 0$
and proof of Proposition \ref{Prop3}}
\label{S2}

We decompose 
\begin{equation}
\label{N.1}
F(x) = a x+ b + g (x) 
\end{equation}
Then, it is clear from (\ref{3}) that $g$ satisfies
\begin{equation}
\label{N.2}
g^{\prime \prime \prime} + \left ( a x + b +g \right ) g^{\prime \prime} =0
\end{equation}
\begin{Lemma}
\label{LemN1}
Assume $a > 0$.
Then any solution $g$ to (\ref{N.2}) for which $g \rightarrow 0$ 
as $x \rightarrow
+\infty$ has the following asymptotic behavior
\begin{equation}
\label{N.5}
g^{\prime \prime} (x) \sim C \exp \left [ - \frac{a}{2} 
\left ( x + \frac{b}{a} \right )^2 \right ]
\end{equation}
\begin{equation}
\label{N.6}
g^\prime (x) \sim \frac{C}{(a x+ b)} 
\exp \left [ - \frac{a}{2} 
\left ( x + \frac{b}{a} \right )^2 \right ]
\end{equation}
\begin{equation}
\label{N.7}
g (x) \sim \frac{C}{(ax+b)^2} 
\exp \left [ - \frac{a}{2} 
\left ( x + \frac{b}{a} \right )^2 \right ]
\end{equation}
\end{Lemma}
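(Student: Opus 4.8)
The plan is to read \eqref{N.2} as a first‑order linear equation for $w:=g''$, namely $w'+(ax+b+g)w=0$, and exploit its exact integrating factor before recovering $g'$ and $g$ by integrating from $+\infty$. The representation is
\begin{equation*}
w(x)=w(x_0)\exp\left[-\int_{x_0}^x\left(as+b+g(s)\right)\,ds\right].
\end{equation*}
If $w\equiv 0$ then $g$ is affine and $g\to 0$ forces $g\equiv 0$, so \eqref{N.5}--\eqref{N.7} hold trivially with $C=0$; otherwise uniqueness for this linear ODE shows $w$ never vanishes and keeps a fixed sign. Since $a>0$ and $g\to 0$, there is an $x_0$ with $ax+b+g(x)\ge\tfrac a2 x>0$ for $x\ge x_0$, which already yields the crude super‑Gaussian bound $|w(x)|\le|w(x_0)|\exp[-\tfrac a4(x^2-x_0^2)]$.

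The hard part will be an apparent circularity: the sharp constant in \eqref{N.5} requires that $\int^\infty g$ converge, yet the decay of $g$ is exactly what one wants to prove. I would break it with a bootstrap. The crude bound makes $w=g''$ absolutely integrable on $[x_0,\infty)$, so $g'(x)=g'(x_0)+\int_{x_0}^x w$ tends to a finite limit $L$; since $g\to 0$ is incompatible with $g'\to L\neq 0$, we get $L=0$ and $g'(x)=-\int_x^\infty w(s)\,ds$. The same integrability then shows $g$ itself is integrable, so $\int_{x_0}^\infty g$ converges. This closes the loop and is the only genuinely delicate point; everything afterwards is bookkeeping.

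Feeding convergence of $\int^\infty g$ back into the integrating factor and using $\int_{x_0}^x(as+b)\,ds=\tfrac a2(x+b/a)^2+\text{const}$ gives
\begin{equation*}
g''(x)=C\,\exp\!\left[-\tfrac a2\left(x+\tfrac ba\right)^2\right]\exp\!\left[\int_x^\infty g(s)\,ds\right],
\end{equation*}
where $C$ is an explicit constant assembled from $w(x_0)$ and $\int_{x_0}^\infty g$. Since $\int_x^\infty g\to 0$, the last factor tends to $1$, giving \eqref{N.5}; moreover that factor equals $1+O\big(\int_x^\infty|g|\big)$, a correction decaying at least as fast as the tail of $g$, so it does not disturb the leading term when integrated. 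Writing $\phi(x)=\tfrac a2(x+b/a)^2$, one has $\phi'(s)=as+b\to\infty$, so Watson's estimate $\int_x^\infty e^{-\phi}\,ds\sim e^{-\phi(x)}/\phi'(x)=e^{-\phi(x)}/(ax+b)$ (equivalently the asymptotic $\mathrm{erfc}(z)\sim e^{-z^2}/(z\sqrt\pi)$) justifies integrating the relation term by term. Thus $g'(x)=-\int_x^\infty g''\sim -\tfrac{C}{ax+b}\exp[-\phi(x)]$ and, integrating once more by parts, $g(x)=-\int_x^\infty g'\sim \tfrac{C}{(ax+b)^2}\exp[-\phi(x)]$, which are \eqref{N.6}--\eqref{N.7}: each integration from infinity divides the leading term by $\sim(ax+b)$, the constant in every line being the $C$ of \eqref{N.5} up to the sign produced by the number of integrations.
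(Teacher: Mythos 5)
Your proposal is correct and follows essentially the same route as the paper: both treat \eqref{N.2} as a first--order linear equation for $g''$ with explicit integrating factor, use $g=o(1)$ to get an a priori super--Gaussian bound, bootstrap to the convergence of $\int^\infty g$, and then integrate twice from $+\infty$ (with the standard $\int_x^\infty e^{-\phi}\sim e^{-\phi}/\phi'$ estimate) to obtain \eqref{N.6}--\eqref{N.7}. The only difference is cosmetic (you bound $ax+b+g\ge \tfrac a2 x$ directly where the paper uses $\int_{x_0}^x g=o(x)$), and your parenthetical remark about the sign alternating with each integration is the correct reading of the constant $C$ in \eqref{N.5}--\eqref{N.7}.
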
   
\begin{proof}
Eq. (\ref{N.2}) implies
\begin{equation}
\label{N.3}
g^{\prime \prime} (x) = {\tilde C} 
\exp \left [ - \frac{a}{2} \left ( x+ \frac{b}{a} \right )^2
+ \int_{x_0}^x g(t) dt \right ]
\end{equation}
Since $g(x) = o(1)$, $\int_{x_0}^x g(t) dt = o (x)$ as $x \rightarrow \infty$;
$a > 0$ and (\ref{N.3}) imply that for sufficiently large $x$
we have
\begin{equation}
\label{N.3.0}
\Big | g^{\prime \prime} (x) \Big | \le \Big | {\tilde C} \Big |
\exp \left [ - \frac{a}{2} \left ( x+ \frac{b}{a} \right )^2 + \epsilon x \right ]
\end{equation}
Integrating $g^{\prime \prime}$ and using \eqref{N.3.0}, we get for large $x$,
\begin{equation}
\label{N.3.1}
\Big | g^{\prime } (x) -C_1 \Big | \le \frac{\Big |{\tilde C} \Big |}{
ax+b -\epsilon}  
\exp \left [ - \frac{a}{2} \left ( x+ \frac{b}{a} \right )^2 + \epsilon x \right ]
\end{equation}
for some $C_1$. Similarly, for large $x$ we get
\begin{equation}
\label{N.3.2}
\Big | g (x) -C_1 x -C_2 \Big | \le \frac{\Big | {\tilde C} \Big |}{
(ax+b -\epsilon)^2}  
\exp \left [ - \frac{a}{2} \left ( x+ \frac{b}{a} \right )^2 + \epsilon x \right ]
\end{equation}
Since $g = o(1) $, we must have $C_1=0, C_2=0$, giving
rise to 
an exponentially decaying {\it a priori} bounds on 
$g$ in \eqref{N.3.2} (with $C_1=0=C_2$). We can then set
$x_0=\infty$ in (\ref{N.3}), and (\ref{N.5}) follows, and by
integration,  (\ref{N.6}) and (\ref{N.7}) hold.
\end{proof}
\begin{Lemma}
\label{LemN2}
For given $a > 0$, $b$ and $C$, for $x_0 >0$ sufficiently large, 
there exists 
unique solution
to \eqref{N.2} in $[x_0, \infty)$ with 
asymptotic behavior (\ref{N.5})-(\ref{N.7}). 
\end{Lemma}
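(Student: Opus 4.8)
The plan is to recast the ODE \eqref{N.2} together with the decay requirement as a single nonlinear integral equation for $g$ on $[x_0,\infty)$ and to solve it by a contraction mapping in a weighted sup-norm. Write $G(x):=\exp\left[-\tfrac{a}{2}\left(x+\tfrac{b}{a}\right)^2\right]$, so that $G'(x)=-(ax+b)G(x)$. Viewing \eqref{N.2} as a first order linear equation for $g''$ with coefficient $-(ax+b+g)$ and integrating exactly as in \eqref{N.3}, any solution with $g\to 0$ must satisfy
\[ g''(x)=C\,G(x)\,\exp\left[\int_x^\infty g(v)\,dv\right], \]
where the constant is normalized precisely so that $g''(x)/G(x)\to C$ (this uses that $\int_x^\infty g$ converges to $0$, guaranteed by the a priori bounds of Lemma \ref{LemN1}). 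Integrating twice down from $+\infty$ with $g'(\infty)=g(\infty)=0$ gives the fixed point equation
\[ g(x)=\mathcal{T}[g](x):=C\int_x^\infty (u-x)\,G(u)\,\exp\left[\int_u^\infty g(v)\,dv\right]du. \]
Differentiating shows conversely that any continuous solution of $g=\mathcal{T}[g]$ is $C^3$, solves \eqref{N.2}, and has $g''\sim C\,G$, so it is enough to produce a unique fixed point of $\mathcal{T}$.

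Next I would fix the functional framework. Take the weight $\omega(x):=G(x)/(ax+b)^2$, which is the expected size of $g$ by \eqref{N.7}, and work in the Banach space of continuous functions on $[x_0,\infty)$ with norm $\|g\|_\omega:=\sup_{x\ge x_0}|g(x)|/\omega(x)$, on the closed ball $\mathcal{B}_\rho=\{g:\|g\|_\omega\le\rho\}$ with $\rho=2|C|$. The only analytic input is a pair of Gaussian tail bounds from a single integration by parts using $G'=-(ax+b)G$: for $ax_0+b>0$ and $x\ge x_0$,
\[ \int_x^\infty G(u)\,du\le\frac{G(x)}{ax+b}, \qquad \int_x^\infty (u-x)\,G(u)\,du\le\frac{G(x)}{(ax+b)^2}, \]
the second following from the first by writing $\int_x^\infty(u-x)G\,du=\int_x^\infty\!\big(\int_v^\infty G\big)dv$. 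These also give $\int_u^\infty\omega(v)\,dv\le G(u)/(au+b)^3$.

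With these bounds the two required estimates are short. For $g\in\mathcal{B}_\rho$ the exponent obeys $\int_u^\infty g\le\rho\,G(u)/(au+b)^3\le\eta(x_0):=\rho\,G(x_0)/(ax_0+b)^3$, which is tiny (Gaussian in $x_0$), so $\exp[\int_u^\infty g]\le e^{\eta}$ and the second tail bound yields $\|\mathcal{T}[g]\|_\omega\le|C|e^{\eta}\le\rho$ once $x_0$ is large; this is the self-mapping step. For the Lipschitz bound, $|e^{A}-e^{B}|\le e^{\eta}|A-B|$ with $|A-B|=\big|\int_u^\infty(g_1-g_2)\big|\le\|g_1-g_2\|_\omega\,G(u)/(au+b)^3$ inserts an extra factor $G(u)/(au+b)^3$; after dividing by $\omega(x)$ the resulting $G^2$-type integral is controlled by $G(x_0)/(au+b)$-powers and tends to $0$ as $x_0\to\infty$, so the contraction constant is $o(1)$. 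Hence for $x_0$ large enough (compatibly with $t\ge T\ge 1.99$ of Proposition \ref{Prop3}) $\mathcal{T}$ is a contraction on $\mathcal{B}_\rho$, and the Banach fixed point theorem produces a unique $g\in\mathcal{B}_\rho$, i.e. a solution of \eqref{N.2} with the asymptotics \eqref{N.5}--\eqref{N.7} and leading constant $C$.

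For uniqueness in the full class asserted, I would argue that if $\tilde g$ is any solution of \eqref{N.2} on $[x_0,\infty)$ with the same $C$ and the behavior \eqref{N.5}--\eqref{N.7}, then Lemma \ref{LemN1} forces $\tilde g$ to decay like $\omega$, hence $\tilde g\in\mathcal{B}_\rho$ for $x_0$ large, while the derivation above shows $\tilde g=\mathcal{T}[\tilde g]$; uniqueness of the fixed point then gives $\tilde g\equiv g$. The main obstacle is not conceptual but the choice of the correct weight $\omega=G/(ax+b)^2$: it must be exactly the one that makes $\mathcal{T}$ self-mapping, since then the additional decay automatically renders $\mathcal{T}$ contractive. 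Getting the normalization of the constant $C$ right through the convergent integral $\int_x^\infty g$, and pinning down $\eta(x_0)$ and the contraction factor sharply enough on the relevant range of $x_0$, is where the (elementary) bookkeeping with $\int_x^\infty(u-x)^k G(u)\,du$ must be carried out carefully.
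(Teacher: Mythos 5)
Your proof is correct and follows essentially the same route as the paper: both arguments rest on the exact integration $g''=\tilde C\exp\bigl[-\tfrac a2(x+b/a)^2+\int g\bigr]$ from Lemma \ref{LemN1}, the Gaussian tail bounds $\int_x^\infty(u-x)e^{-\frac a2(u+b/a)^2}\,du\le e^{-\frac a2(x+b/a)^2}(ax+b)^{-2}$, and a contraction in a ball of radius $2|C|$ for $x_0$ large. The only cosmetic difference is the choice of unknown and norm: the paper takes $H(x)=e^{\frac a2(x+b/a)^2}g''(x)$ and contracts the resulting integral equation for $H$ in the plain sup norm, recovering $g$ from $H$ by double integration, whereas you contract directly on $g$ in the weighted norm with weight $G/(ax+b)^2$ after solving the $H$--equation exactly as $\exp[\int_x^\infty g]$; the estimates are interchangeable.
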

\begin{proof}
Take $x_0 > 0$ sufficiently large so that 
\begin{equation}
\label{N.3.3.0}
\frac{e^{-a/2 (x_0 + b/a)^2}}{(ax_0+b)^3}  
< \frac{1}{4 |C|} 
\end{equation}
We define $H(x) = \exp \left [ \frac{a}{2} (x+\frac{b}{a} )^2 \right ] 
g^{\prime \prime} (x)$.
Then, from Lemma \ref{LemN1}, we see that
the appropriate space for $H$ is 
$\mathbf{C} ([x_0, \infty))$
with the sup norm.
Lemma \ref{LemN1} also implies
\begin{equation}
\label{N.3.3}
g(x) = \int_{\infty}^x \int_{\infty}^y 
\exp \left [ -\frac{a}{2} (s+\frac{b}{a} )^2 \right ] H (s) ds dy 
\end{equation} 
Eq. (\ref{N.3.3}) immediately implies 
\begin{equation}
\label{N.3.4}
|g(x) | \le  \frac{e^{-\frac{a}{2} (x+b/a)^2 }}{(ax+b)^2} \| H \|_\infty 
\end{equation}
From (\ref{N.2}) 
we obtain the following integral equation:
\begin{equation}
\label{N.3.5}
H (x) = C - \int_{\infty}^x \int_\infty^y H(s) g(s) ds dy =:\mathcal{N} [H] (x) 
\ ,
\end{equation}
where $g$ is determined from $H$ using (\ref{N.3.3}). 
We analyze (\ref{N.3.5}) in $\mathbf{C} \left ( [x_0 , \infty ) \right )$.
We consider the space of continuous functions in $[x_0, \infty)$. 
equipped with $\| . \|_\infty$ norm. Clearly 
$\mathcal{N}$ maps a ball of radius $2 |C|$ in this space
back to itself since (\ref{N.3.4}),
(\ref{N.3.5}) and (\ref{N.3.3.0}) implies
\begin{equation}
\label{N.3.6}
\| \mathcal{N} [H] \|_\infty \le |C| + 
\frac{e^{-a/2 (x_0 + b/a)^2}}{(ax_0+b)^3}  
\| H \|^2_\infty 
\le 2 |C |
\end{equation}
Thus $\mathcal{N}$ maps the ball $B_C$ of radius $2 |C|$ into itself and is contractive
there since
\begin{multline}
\label{N.3.7}
\| \mathcal{N} [ H_1 ] - \mathcal{N} [H_2] \|_\infty \le 
\frac{e^{-a/2 (x_0 + b/a)^2}}{(ax_0+b)^3}  
\left ( \| H_1\|_\infty+\|H_2 \|_\infty \right ) 
\| H_1 - H_2 \|_\infty \\
\le \frac{4 |C| e^{-a/2 (x_0 + b/a)^2}}{(ax_0+b)^3}  
\| H_1 - H_2 \|_\infty 
=:\alpha \|H_1 - H_2 \|_\infty \ ,
\end{multline}
where $\alpha < 1$.
Thus, (\ref{N.3.5}) has a unique solution in $B_C$.
Furthermore from 
\eqref{N.3.5}, 
it is clear that  $\Big | H (x) - C \Big |$ as $x\to +\infty$. 
Recalling the definition of $H$ we see that
the
asymptotic behavior of $g$ is as given by
Lemma \ref{LemN1}.
\end{proof}
\begin{Remark}{\rm 
\label{remHunique}
From Lemmas \ref{LemN1} and \ref{LemN2}, it follows that for given $a$, $b$ with
$a >0$, there exists a one parameter ($C$) family of solutions $g$ to (\ref{N.2})  
for which $g \rightarrow 0$ as $x \rightarrow \infty$. 
Any such solution has the asymptotic behavior
given 
in Lemma \ref{LemN1}}
\end{Remark}

We seek to prove Proposition \ref{Prop3}. For that purpose, 
for $a > 0$, recalling the change of variable
$t = t(x)$ in (\ref{10.2}), we make the transformation:
\begin{equation}
\label{14.1}
F(x) = a x + b +
\sqrt{\frac{a}{2t}} q(t) 
\end{equation}   
Note that the change of variable involves the parameters $a$ and $b$; 
$c$ only appears in the the solution $q(t)$ as shall be seen shortly.
The domain $t \ge T$ corresponds to $ x \ge - \frac{b}{a} + \sqrt{\frac{2 T}{a}} $.
The change of variables (\ref{14.1}) in (\ref{3}) results in $q(t)$ satisfying 
\begin{equation}
\label{14.2}
{\frac {d^{3}}{d{t}^{3}}}q + \left ( 1 +  
\frac {q}{2t} \right )  {\frac {d^{2}}{d{t}^{2}}}q
+ \left( -\frac {1}{2t}+\frac{3}{4 t^2} -
\frac {q}{4 t^{2}} \right) \frac {dq}{dt} 
+ \left ( \frac{1}{2 t^2} -\frac{3}{4 t^3} \right ) q  
 +\frac{q^2}{4 t^3} = 0 
\end{equation}
Equation (\ref{14.2}) admits two growing solutions $\sqrt{t}$ and $t$ 
corresponding to the freedom of changing $a$ and $b$. The only solution for which
$\frac{q}{\sqrt{t}} \rightarrow 0 $ 
$t \rightarrow \infty$, as noted in Lemma
\ref{LemN1}, corresponds to 
$q(t) \sim c t^{-1/2} e^{-t}$ for some $c$.

From the general theory of representation of solutions by transseries
\cite{Duke} \footnote{A slight
modification is needed to accommodate the
present ODE which violates 
a non-degeneracy condition on the eigenvalues of the linearization; the changes
are minor. Also, transseries are used to generate the appropriate ansatz and
motivate our choice of $q_0$, but play no direct role in the proofs.}
it follows that any decaying solution to (\ref{14.2})
has the following convergent function series representation 
for sufficiently large $x$:
\begin{equation}
\label{14.4}
q(t) = \sum_{n=1}^\infty \xi^n Q_n (t)  \ , {\rm where} ~\xi = \frac{c e^{-t}}{\sqrt{t}} 
\end{equation}
where the functions $Q_n$ are bounded \footnote{More precisely, $Q_n$ are the Borel sums of their asymptotic power series.}. The equations for $Q_n$ are obtained by formally plugging in (\ref{14.4}) into
(\ref{14.2}), equating the different powers of $\xi$ and requiring that $Q_n$ be
bounded as $t \rightarrow \infty$.
Only the equation for $Q_1$ is homogeneous while for $n > 1$, the equation
for $Q_n$ involves $Q_j$ for $1 \le j < n$ as a forcing term. The associated
homogeneous equation does not admit any bounded
solution, and thus the $Q_n$s are uniquely determined from their equations and the boundedness
condition.  
The multiplicative freedom of $Q_1$ is equivalent to choice of $c$ and therefore without
loss of generality, 
we may assume $Q_1 \rightarrow 1$ as $t \rightarrow \infty$. 

This motivates the choice of the approximation $q_0$ as a truncation of (\ref{14.4}) (we choose to retain two terms in the expansion). 
To prove that this approximation is accurate, we {\em define} $\mathcal{E}=q-q_0$ and show that $\mathcal{E}$ is small for $t \ge T$ in an exponentially  weighted $L^\infty$ norm. 
We thus define
\begin{equation}
\label{14.3.1} 
\mathcal{E} (t) =q(t) - q_0 (t)
\end{equation}
where 
\begin{equation}
\label{14.3.0}
q_0 (t) = \frac{c e^{-t}}{\sqrt{t}} Q_1 (t) + \frac{c^2 e^{-2t}}{t} Q_2 (t) 
\end{equation}
where 
\begin{equation}
\label{14.5}
Q_1 (t)  = 2 t I_0  (t) \ , ~{\rm where} ~I_0 (t) := 1 - \sqrt{\pi t} e^{t} {\rm erfc} (\sqrt{t}) = \frac{1}{2} \int_0^\infty \frac{e^{-st}}{(1+s)^{3/2}} ds 
\end{equation}
\begin{equation} 
\label{14.5.0}
Q_2 (t) =  
- t I_0 - t I_0^2 + 2 t J_0 \ , ~{\rm where} ~ J_0 (t) 
:= 1 - \sqrt{2 \pi t} e^{2 t} {\rm erfc} (\sqrt{2 t}) = \frac{1}{4} \int_0^\infty \frac{e^{-st}}{(1+s/2)^{3/2}} ds 
\end{equation}
\begin{Remark}
\label{remq}
\rm{ 
It is clear from (\ref{14.3.0})-\ref{14.5.0}) that
$q_0 (t) \sim \frac{c e^{-t}}{\sqrt{t}}$ as $t \rightarrow \infty$; furthermore
since, by the change of variables we have
$ g(x) = \sqrt{\frac{a}{2 t(x)}} q(t(x))$, by 
Lemma \ref{LemN1}, with $C=\sqrt{2} a^{3/2} c$,
we have
\begin{equation}
\label{eqasymptq0}
q(t) \sim q_0 (t)  ~\ , ~ q^\prime (t) \sim q_0^\prime (t) \ , 
q^{\prime \prime} (t) \sim q_0^{\prime \prime} (t) 
\ , {\rm as} ~~t \rightarrow \infty
\end{equation}
}
\end{Remark}
To analyze the fully nonlinear equation (\ref{14.2}) we write the differential equation for $\mathcal{E}$ which follows from \eqref{14.3.1} and (\ref{14.2})
\begin{multline}
\label{14.4.0}
{\frac {d^{3}}{d{t}^{3}}} \mathcal{E} + \left ( 1 +  
\frac {q_0}{2t} \right )  {\frac {d^{2}}{d{t}^{2}}} \mathcal{E}
+ \left( -\frac {1}{2t}+\frac{3}{4 t^2} -
\frac {q_0}{4 t^{2}} \right) \frac {d\mathcal{E}}{dt}  \\
+ \left ( \frac{1}{2 t^2} -\frac{3}{4 t^3} + 
\frac{q_0^{\prime \prime}}{2t}- \frac{q_0^\prime}{4 t^2}
+ \frac{q_0 (t)}{2 t^3} \right ) \mathcal{E} =   
- \frac{\mathcal{E}}{2t} {\mathcal{E}}^{\prime \prime} + \frac{\mathcal{E}}{4 t^2} {\mathcal{E}}^\prime 
-\frac{{\mathcal{E}}^2}{4 t^3}  - R 
\end{multline}
where the remainder $R=R (t)$ is given by  
\begin{multline}
\label{14.5R}
R = 
{\frac {d^{3}}{d{t}^{3}}}q_0 + \left ( 1 +  
\frac {q_0}{2t} \right )  {\frac {d^{2}}{d{t}^{2}}}q_0
+ \left( -\frac {1}{2t}+\frac{3}{4 t^2} -
\frac {q_0}{4 t^{2}} \right) \frac {dq_0}{dt} 
+ \left ( \frac{1}{2 t^2} -\frac{3}{4 t^3} \right ) q_0  
 +\frac{q_0^2}{4 t^3} \\
= \xi^3 R_3 (t) + \xi^4 R_4 (t) \ , ~{\rm where}~ \xi = c t^{-1/2} e^{-t} 
\end{multline}
where 
\begin{multline}
\label{14.5.1}
R_3 (t) =  \left ( -\frac{ 3 Q_1^\prime}{4 t^2} -\frac{Q_1^\prime}{t}  
+\frac{5 Q_1}{2 t} + \frac{Q_1^{\prime \prime}}{2 t} +  
\frac{13 Q_1}{4 t^2} + \frac{9 Q_1}{4 t^3} \right ) Q_2 - \frac{2}{t} Q_1 Q_2^\prime
+ \frac{1}{2t} Q_1 Q_2^{\prime \prime} - \frac{5}{4 t^2} Q_1 Q_2^\prime
\end{multline}
\begin{equation}
\label{14.5.2.0}
R_4 (t) = Q_2 \left ( \frac{Q_2^{\prime \prime}}{2t} - \frac{2 Q_2^\prime}{t} - \frac{5 Q_2^\prime}{4 t^2} \right )   
+ Q_2^2 \left ( \frac{2}{t} + \frac{5}{2 t^2} + \frac{3}{2 t^3} \right ) 
\end{equation}
Using (\ref{14.5R}) and (\ref{14.5.0}) in \eqref{14.5.2.0} we get, after some algebra,
\begin{equation}
\label{eqR3}
R_3 (t) =  J_0  - t I_0^2 -I_0^2 
\end{equation}
\begin{equation}
\label{eqR4}
R_4 (t) = \frac{t}{2} \left ( I_0^2 + I_0^3 - 2 I_0 J_0 \right ) + \frac{1}{2} J_0 - \frac{1}{4} I_0 - \frac{1}{2} I_0 
J_0 + \frac{1}{4} I_0^3 
\end{equation}
In the appendix (see equations \eqref{eqR3m} and \eqref{eqR4m}), it is shown
that
 $0 \le R_3 (t) \le R_{3,m} \le 0.02057$ 
and $0 \le R_4 (t) \le R_{4,m} \le 0.0009042 $ for $t \ge 1.99$.
Instead of using a variation of parameter formula for the third order linear operator on
the left of (\ref{14.4.0}) and 
turn (\ref{14.4.0}) into an integral equation, we find it 
convenient to define the auxiliary function
\begin{equation}
\label{14.5.1.1.1}
h(t) = e^{t} \left ( \sqrt{t} \mathcal{E}^{\prime \prime}
- \frac{\mathcal{E}^\prime}{2 \sqrt{t}} + \frac{\mathcal{E}(t)}{2 t^{3/2} } 
\right )
\end{equation}
and analyze the equation for $h$. 
\begin{Remark}
\label{remh}
\rm{
The choice $h$ is motivated by the observation
that 
\begin{equation}
\label{eqRelationh}
\frac{d^2}{dx^2} \sqrt{\frac{a}{2t (x)}} \mathcal{E} (t(x) ) =   
\sqrt{2} a^{3/2} e^{-t (x)} h(t (x)) 
\end{equation}
and thus by \eqref{eqRelationh}, \eqref{eqasymptq0} and  Lemmas \ref{LemN1} and \ref{LemN2},    $ g(x) = \sqrt{\frac{a}{2 t(x)}} 
q(t (x) ) \rightarrow 0$ implies
$h(t) \rightarrow 0$ as $t \rightarrow \infty$.
}
\end{Remark}

Equation 
(\ref{14.4}) can be rewritten as
\begin{equation}
\label{14.5.2}
h^\prime 
= - \frac{q_0 e^{t}}{2t} h + e^{t} B(t) \mathcal{E}   
-\frac{\mathcal{E} h}{2 t} 
-  t^{1/2} e^{t} R, 
\end{equation}
where
\begin{equation}
\label{14.8}
B (t) = - \frac{q_0^{\prime \prime} (t)}{2 t^{1/2} }
+ \frac{q_0^\prime (t)}{4 t^{3/2}} - \frac{q_0 (t)}{4 t^{5/2}} 
\end{equation}
The function $\mathcal{E}(t)$ can be written in terms of $h$ as follows:
\begin{equation}
\label{14.13}
\mathcal{E} (t) = t^{1/2}
\int_{\infty}^t \tau^{-1/2} \int_{\infty}^\tau 
s^{-1/2} e^{-s} h(s) ds d\tau 
\end{equation}
We write (\ref{14.5.2}) in integral form
\begin{equation}
\label{14.7}
h(t) = h_0 (t) - \int_{\infty}^t 
\frac{q_0 (\tau) e^{\tau} }{2 \tau} h(\tau) d\tau
+ \int_{\infty}^t e^{\tau} B (\tau) \mathcal{E} (\tau) 
d\tau
- \int_{\infty}^t \frac{h(\tau) \mathcal{E}(\tau)}{2 \tau} d\tau  
=: \mathcal{N} \left [ h \right ] (\tau),
\end{equation}
where
\begin{equation}
\label{14.9}
h_0 (t) = -\int_{\infty}^t e^{\tau} \tau^{1/2} R (\tau) d\tau.
\end{equation}
We will analyze \eqref{14.7} to find a unique exponentially decaying $h$ (cf. \eqref{eqNormH}), and then determine 
$\mathcal{E}$ from 
\eqref{14.13}.
\begin{Remark}{\rm 
The functions, $q_0, \mathcal{E}, h $ and $R$
depend on $c$ as well. For
simplicity, our notation we will not show this dependence, except when
needed.}
\end{Remark}
We will prove that 
the operator $\mathcal{N}$ defined in \eqref{14.7} is contractive in some small
ball in the space $\mathcal{H}$ defined as follows:
\begin{Definition}
Let $\mathcal{H}$ be the Banach space of continuous functions in $[T, \infty)$ equipped with
the weighted norm
\begin{equation}
\label{eqNormH}
\| h \| := \sup_{t \ge T} t e^{2 t} | h (t) | 
\end{equation}
\end{Definition}

\z We now prove some preliminary results needed in the proof of Proposition \ref{Prop3}.
\begin{Lemma}
\label{lemn2}
For $t$ in $[T, \infty)$ we have
\begin{equation}
\label{14.16}
\| h_0 \|
\le |c|^3 \left ( \frac{1}{2} R_{3,m} + \frac{|c| e^{-T}}{3 \sqrt{T}} R_{4,m} \right ) 
\le 
|c|^3 \left ( \frac{1}{2} R_{3,m} + \frac{|c| e^{-T}}{3 \sqrt{T}} 
R_{4,m} \right ) 
\end{equation}
\begin{equation}
\label{14.16.0}
\| \partial_c h_0 \|
\le c^2 \left ( \frac{3}{2} R_{3,m} + \frac{4 |c| e^{-T}}{3 
\sqrt{T}} R_{4,m} \right ) 
\end{equation}
where $R_{3,m}$ and $R_{4,m}$ are upper bounds for $|R_3 (t)|$ and
$|R_4 (t) |$ in $[T, \infty)$.
\end{Lemma}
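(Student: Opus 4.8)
The plan is to make the dependence of $h_0$ on $c$ fully explicit and then dispatch the two resulting integrals by elementary monotonicity estimates. First I would insert the decomposition $R = \xi^3 R_3 + \xi^4 R_4$ with $\xi = c\, t^{-1/2} e^{-t}$ from \eqref{14.5R} into the definition \eqref{14.9} and rewrite $-\int_{\infty}^t = \int_t^{\infty}$. Since $e^{\tau}\tau^{1/2}\,\xi^3 = c^3 \tau^{-1} e^{-2\tau}$ and $e^{\tau}\tau^{1/2}\,\xi^4 = c^4 \tau^{-3/2} e^{-3\tau}$, this yields
\[
h_0(t) = c^3 \int_t^{\infty} \tau^{-1} e^{-2\tau} R_3(\tau)\, d\tau + c^4 \int_t^{\infty} \tau^{-3/2} e^{-3\tau} R_4(\tau)\, d\tau .
\]
The crucial structural point is that $R_3$ and $R_4$ are functions of $t$ alone (through $I_0$ and $J_0$), so the entire $c$-dependence is carried by the scalar prefactors $c^3$ and $c^4$; this is what makes the differentiation needed for \eqref{14.16.0} trivial.

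Next I would estimate each integral using $|R_3| \le R_{3,m}$ and $|R_4| \le R_{4,m}$ on $[T,\infty)$ together with the monotonicity bounds $\tau^{-1} \le t^{-1}$ and $\tau^{-3/2} \le t^{-3/2}$, valid for $\tau \ge t$. Pulling these factors out at the lower endpoint and using $\int_t^{\infty} e^{-2\tau}\, d\tau = \tfrac12 e^{-2t}$ and $\int_t^{\infty} e^{-3\tau}\, d\tau = \tfrac13 e^{-3t}$ gives
\[
|h_0(t)| \le \frac{|c|^3 R_{3,m}}{2}\, t^{-1} e^{-2t} + \frac{|c|^4 R_{4,m}}{3}\, t^{-3/2} e^{-3t}.
\]

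Finally I would multiply by the weight $t e^{2t}$ from \eqref{eqNormH} and take the supremum over $t \ge T$. The $R_3$ contribution collapses to the constant $\tfrac12 |c|^3 R_{3,m}$, while the $R_4$ contribution becomes $\tfrac13 |c|^4 R_{4,m}\, t^{-1/2} e^{-t}$, which is decreasing in $t$ and therefore attains its maximum at $t=T$, producing $\tfrac13 |c|^4 R_{4,m}\, T^{-1/2} e^{-T}$; summing gives exactly \eqref{14.16}. For \eqref{14.16.0} I would differentiate the explicit prefactors, $\partial_c h_0 = 3c^2 \int_t^{\infty} \tau^{-1} e^{-2\tau} R_3\, d\tau + 4c^3 \int_t^{\infty} \tau^{-3/2} e^{-3\tau} R_4\, d\tau$, and rerun the identical two estimates, which simply replaces $\tfrac12 |c|^3$ by $\tfrac32 c^2$ and $\tfrac13 |c|^4$ by $\tfrac43 |c|^3$. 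There is no real obstacle here; the only care needed is to extract the correct monotone power of $\tau$ so that the $R_3$ term yields a $t$-independent bound while the $R_4$ term keeps a decaying factor to be evaluated at $T$, and to carry the constants $\tfrac12$ and $\tfrac13$ from the exponential integrals through to the stated inequalities.
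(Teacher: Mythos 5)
Your proposal is correct and follows essentially the same route as the paper: substitute $R=\xi^3R_3+\xi^4R_4$ into \eqref{14.9}, note $e^{\tau}\tau^{1/2}\xi^3=c^3\tau^{-1}e^{-2\tau}$ and $e^{\tau}\tau^{1/2}\xi^4=c^4\tau^{-3/2}e^{-3\tau}$, pull out $\tau^{-1}\le t^{-1}$, $\tau^{-3/2}\le t^{-3/2}$, integrate the exponentials, and take the weighted supremum; the $c$-derivative bound then follows by differentiating the explicit prefactors exactly as you describe. Your observation that $R_3,R_4$ depend on $t$ only (so all $c$-dependence sits in the prefactors) is the same fact the paper uses implicitly when it says \eqref{14.16.0} ``follows similarly.''
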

\begin{proof}
Using \eqref{14.5R} and (\ref{14.9})
we obtain
\begin{multline}
\label{14.16.1}
\Big | h_0 (t) \Big | \le \int_{t}^\infty \left ( |c|^3 e^{-2 \tau} 
\tau^{-1}  R_{3,m} + 
c^4 e^{- 3 \tau} \tau^{-3/2} R_{4,m} \right ) d\tau  
\\
\le \frac{|c|^3 e^{-2 T}}{2 T} R_{3,m} + 
\frac{c^4 e^{-3 T}}{3 T^{3/2}} R_{4,m}
\end{multline}
implying (\ref{14.16})
After differentiating (\ref{14.9}) 
with respect to $c$, and using (\ref{14.5R}),
(\ref{14.16.0}) follows similarly.
\end{proof}
\begin{Lemma}
\label{lemn1}
\begin{equation}
\label{14.15}
\Big | \mathcal{E} (t) \Big | \le \frac{1}{9 t^{3/2}} e^{-3 t} \| h \|   \ , 
\Big | \partial_c \mathcal{E} (t) \Big | \le 
\frac{1}{9 t^{3/2}} e^{-3 t} \| \partial_c h \|  
\end{equation}
\end{Lemma}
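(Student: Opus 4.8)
The plan is to read the bound directly off the integral representation \eqref{14.13}, which expresses $\mathcal{E}$ as a \emph{fixed} linear double-integral operator applied to $h$. The kernel of that operator carries no $c$-dependence, so both inequalities in \eqref{14.15} will come out of one and the same monotone-majorant computation, the second simply being the first with $h$ replaced by $\partial_c h$.

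First I would convert the weighted norm into a pointwise bound on $h$: by the definition \eqref{eqNormH}, every $s \ge T$ satisfies $|h(s)| \le \|h\|\, s^{-1} e^{-2s}$. Inserting this into the inner integral of \eqref{14.13} (the orientation $\int_\infty^\tau$ only contributes a sign, which disappears under the modulus) gives
\begin{equation}
\Big| \int_\infty^\tau s^{-1/2} e^{-s} h(s)\, ds \Big| \le \|h\| \int_\tau^\infty s^{-3/2} e^{-3s}\, ds .
\end{equation}
The elementary monotonicity estimate $s^{-3/2} \le \tau^{-3/2}$ on $[\tau,\infty)$, combined with $\int_\tau^\infty e^{-3s}\, ds = \tfrac13 e^{-3\tau}$, bounds the inner integral by $\tfrac13 \|h\|\, \tau^{-3/2} e^{-3\tau}$. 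Feeding this into the outer integral and using the same two ingredients ($\tau^{-2}\le t^{-2}$ on $[t,\infty)$ and $\int_t^\infty e^{-3\tau}\,d\tau = \tfrac13 e^{-3t}$) yields
\begin{equation}
\Big| \int_\infty^t \tau^{-1/2}\!\int_\infty^\tau s^{-1/2} e^{-s} h(s)\,ds\,d\tau \Big| \le \frac{\|h\|}{9}\, t^{-2} e^{-3t}.
\end{equation}
The prefactor $t^{1/2}$ in \eqref{14.13} then converts $t^{-2}$ into $t^{-3/2}$, giving exactly $|\mathcal{E}(t)| \le \tfrac{1}{9}\, t^{-3/2} e^{-3t}\|h\|$, which is the first inequality.

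For the second inequality I would differentiate \eqref{14.13} under the double integral. Since the kernel $\tau^{-1/2} s^{-1/2} e^{-s}$ does not depend on $c$, this produces $\partial_c \mathcal{E}(t) = t^{1/2}\int_\infty^t \tau^{-1/2}\int_\infty^\tau s^{-1/2} e^{-s}\,\partial_c h(s)\,ds\,d\tau$, i.e.\ the very operator estimated above applied to $\partial_c h$, so the identical computation delivers $|\partial_c \mathcal{E}(t)| \le \tfrac{1}{9}\, t^{-3/2} e^{-3t}\|\partial_c h\|$. The only point needing genuine care — and the one I would flag as the main, though minor, obstacle — is justifying the interchange of $\partial_c$ with the two integrals. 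This requires $\partial_c h$ to exist, be continuous, and enjoy the same weighted decay that makes $\|\partial_c h\|$ finite; these properties are exactly what the companion estimate \eqref{14.16.0} of Lemma \ref{lemn2} and the contraction analysis for \eqref{14.7} are set up to provide, so a dominated-convergence argument closes the gap and everything else is routine.
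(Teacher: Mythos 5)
Your proposal is correct and follows essentially the same route as the paper: both convert the norm $\|h\|$ into the pointwise bound $|h(s)|\le \|h\|\,s^{-1}e^{-2s}$, majorize the inner and outer integrals by pulling out the monotone power factors and integrating the pure exponential to obtain the two factors of $\tfrac13$, and obtain the $\partial_c$ bound by applying the identical estimate to the differentiated identity $\partial_c\mathcal{E}=t^{1/2}\int_\infty^t\tau^{-1/2}\int_\infty^\tau s^{-1/2}e^{-s}\partial_c h\,ds\,d\tau$. Your extra remark on justifying differentiation under the integral via the bounds of Lemma \ref{lemn2} is a reasonable refinement that the paper leaves implicit.
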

\begin{proof} 
We note that
$$ \Big | \int_{\tau}^\infty s^{-1/2} e^{-s} h(s) ds  
\Big | \le \frac{1}{3} \tau^{-3/2} \| h \| e^{-3 \tau}$$
Using the inequality above it follows that
$$ \Big | t^{1/2} \int_t^{\infty} \tau^{-1/2} \int_{\tau}^\infty 
s^{-1/2} e^{-s} h(s) ds \Big |
\le \frac{1}{9} t^{-3/2} e^{-3t} \| h \| $$
The bounds on $\partial_c \mathcal{E}$ are obtained in a similar way since
(\ref{14.13}) implies
$$ \partial_c \mathcal{E}  (t; c) = 
t^{1/2} \int_{\infty}^t \tau^{-1/2} \int_{\infty}^t s^{-1/2} e^{-s} \partial_c h (s; c) ds d\tau $$
\end{proof}
Let
\begin{equation}
  \label{eq:defd0}
   d_0 = \frac{e^{-T}}{\sqrt{T}} \Big | Q_2 (T) + 0.0944 e^{-6.159 T} \Big |
\end{equation}
\begin{Lemma}
\label{lemq0}
For $t \ge T $, 
$q_0$ (cf. \eqref{14.3.0}) satisfies the following bounds
\begin{equation}
  \label{eq:L11.1}
  \Big | q_0 \Big | \le 
\frac{|c| e^{-t}}{\sqrt{t}} 
\left ( 1 + d_0 |c| \right )
\le 
\frac{|c| e^{-T}}{\sqrt{T}} 
\left ( 1 + d_0 |c| \right )
=:q_{0,m}
\end{equation}
\begin{equation} \label{eq:L11.2}
  \Big | \partial_c q_0 \Big | \le 
\frac{e^{-t}}{\sqrt{t}} 
\left ( 1 + 2 d_0 |c| 
\right )
\le 
\frac{e^{-T}}{\sqrt{T}} 
\left ( 1 + 2 d_0 |c| \right )
=:q_{0,c,m}
\end{equation}
\begin{equation} \label{eq:L11.3}
  \Big | q_0^\prime - \frac{q_0}{2t} \Big | 
\le \frac{|c| e^{-t}}{\sqrt{t}} \left ( 1 + \frac{3 |c| e^{-t}}{4 t^{3/2}} 
\right ) 
\le \frac{|c| e^{-T}}{\sqrt{T}} \left ( 1 + \frac{3 |c| e^{-T}}{4 T^{3/2}} 
\right ) =:q_{0,d,m} 
\end{equation}
\begin{equation}\label{eq:L11.4}
  \Big | \partial_c \left \{ q_0^\prime - \frac{q_0}{2t} \right \} \Big | 
\le \frac{e^{-t}}{\sqrt{t}} \left ( 1 + \frac{3 |c| e^{-t}}{2 t^{3/2}} 
\right ) 
\le \frac{e^{-T}}{\sqrt{T}} \left ( 1 + \frac{3 |c| e^{-T}}{2 T^{3/2}} 
\right ) =:q_{0,d,c,m} 
\end{equation}
\end{Lemma}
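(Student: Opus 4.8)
The plan is to reduce all four bounds to a small set of elementary facts about $I_0$ and $J_0$ and then assemble them. First I would record the two representations that make everything tractable: the Laplace-type integral $Q_1(t) = 2t\,I_0(t) = \int_0^\infty e^{-u}(1+u/t)^{-3/2}\,du$, obtained from \eqref{14.5} by the substitution $u = st$, and the identity $J_0(t) = I_0(2t)$, which is immediate upon comparing \eqref{14.5} and \eqref{14.5.0}. Writing $q_0 = \frac{c e^{-t}}{\sqrt t}\bigl[Q_1 + \frac{c e^{-t}}{\sqrt t} Q_2\bigr]$ and $\partial_c q_0 = \frac{e^{-t}}{\sqrt t}\bigl[Q_1 + 2\frac{c e^{-t}}{\sqrt t} Q_2\bigr]$, the bounds \eqref{eq:L11.1}--\eqref{eq:L11.2} follow at once from two ingredients: the uniform bound $0 \le Q_1(t) \le 1$ and the claim $\frac{e^{-t}}{\sqrt t}\,|Q_2(t)| \le d_0$ for $t \ge T$; the passage from the $t$-dependent to the $T$-dependent bound uses only that $t \mapsto e^{-t}/\sqrt t$ is decreasing.

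Next I would establish the elementary pointwise estimates. Since $(1+u/t)^{-3/2} \le 1$, the integral representation gives $0 \le Q_1 \le 1$, i.e. $0 \le I_0 \le \frac{1}{2t}$; convexity of $y \mapsto (1+y)^{-3/2}$ yields $(1+u/t)^{-3/2} \ge 1 - \frac{3u}{2t}$, hence $Q_1 \ge 1 - \frac{3}{2t}$ and the sharper two-sided bound $\frac{1}{2t} - \frac{3}{4t^2} \le I_0(t) \le \frac{1}{2t}$; the same applied to $J_0(t) = I_0(2t)$ gives $0 \le J_0 \le \frac{1}{4t}$. These sharper bounds are exactly what is needed for the correction terms in \eqref{eq:L11.3}--\eqref{eq:L11.4}.

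For the derivative bounds I would first derive the closed-form relations $I_0' = I_0 - \frac{1-I_0}{2t}$ and, via $J_0(t)=I_0(2t)$, $J_0' = 2J_0 - \frac{1-J_0}{2t}$, directly from \eqref{14.5}--\eqref{14.5.0} (equivalently, differentiating the error-function expressions using $\frac{d}{dt}\mathrm{erfc}(\sqrt t) = -e^{-t}/\sqrt{\pi t}$). Splitting $q_0 = A + B$ with $A = 2c\sqrt t\,e^{-t} I_0$ and $B = c^2 e^{-2t}(2J_0 - I_0 - I_0^2)$ and using these relations, a direct computation gives $A' - \frac{A}{2t} = -\frac{c e^{-t}(1-I_0)}{\sqrt t}$ and $B' - \frac{B}{2t} = c^2 e^{-2t}\bigl[I_0 + \frac{1}{t}(I_0 - \frac12 - \frac{I_0^2}{2})\bigr]$, the key simplification being that all $J_0$-terms cancel. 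Since $0 \le 1 - I_0 \le 1$, the $A$-term contributes the leading $\frac{|c|e^{-t}}{\sqrt t}$; for the $B$-term the two-sided bound on $I_0$ gives $|I_0 + \frac1t(I_0 - \frac12 - \frac{I_0^2}{2})| \le \frac{3}{4t^2}$ for $t \ge T \ge 1.99$ (the upper bound from $I_0 \le \frac1{2t}$, the lower from $I_0 \ge \frac{1}{2t} - \frac{3}{4t^2}$, where the restriction $t \ge 1.99$ controls the residual $O(t^{-3})$ terms). This yields \eqref{eq:L11.3}; differentiating in $c$ — which touches only the explicit prefactors $c, c^2$ — produces the factor-of-two change and gives \eqref{eq:L11.4}.

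The main obstacle is the uniform estimate $\frac{e^{-t}}{\sqrt t}\,|Q_2(t)| \le d_0$ underlying \eqref{eq:L11.1}--\eqref{eq:L11.2}. Because $Q_2 = t(2J_0 - I_0 - I_0^2)$ is positive near $t = T$ but changes sign and decays like $-\frac{1}{4t}$ at infinity, one cannot simply bound $|Q_2|$ by its endpoint value; instead I would show that $\frac{e^{-t}}{\sqrt t}Q_2(t)$ is decreasing on $[T,\infty)$ past its maximum near $T$ and separately control the small negative tail, the correction term $0.0944\,e^{-6.159T}$ in the definition \eqref{eq:defd0} of $d_0$ absorbing precisely this tail/derivative contribution. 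This monotonicity-and-tail analysis — analogous to the bounds on $R_3, R_4$ deferred to the appendix — is the only genuinely quantitative step; everything else reduces to the algebraic identities and the convexity estimate above.
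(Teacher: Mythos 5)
Your treatment of the derivative bounds \eqref{eq:L11.3}--\eqref{eq:L11.4} is correct and is essentially the paper's own argument in different clothing: the paper also computes $q_0'-q_0/(2t)$ explicitly, also finds that all $J_0$-terms cancel (its expression $-\frac{c^2e^{-2t}}{4t^2}(3I_2-2I_1+\frac{I_1^2}{2t})$, with $1-I_1=\frac{3}{2t}I_2$ and $I_1,I_2\in(0,1)$, is the same as your $c^2e^{-2t}[I_0+\frac1t(I_0-\frac12-\frac{I_0^2}{2})]$ together with your two-sided bound $\frac1{2t}-\frac3{4t^2}\le I_0\le\frac1{2t}$), and the resulting estimates agree. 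Likewise $0\le Q_1\le1$ and the reduction of \eqref{eq:L11.1}--\eqref{eq:L11.2} to a bound on $e^{-t}t^{-1/2}|Q_2(t)|$ match the paper.

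The gap is in the one step you yourself flag as the only quantitative one: the inequality $e^{-t}t^{-1/2}|Q_2(t)|\le d_0$ is not proved, and the sketch you offer rests on a wrong picture of $Q_2$. Since $2J_0-I_0=\frac{3}{8t}+O(t^{-2})$ (the $O(t^{-1})$ corrections to $tI_0\to\frac12$ and $2tJ_0\to\frac12$ do not cancel) while $tI_0^2\sim\frac{1}{4t}$, one has $Q_2(t)=t(2J_0-I_0-I_0^2)\sim+\frac{1}{8t}$ as $t\to\infty$: $Q_2$ does not change sign and does not decay like $-\frac{1}{4t}$, so there is no ``small negative tail'' to control. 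The genuine difficulty is the opposite one: $Q_2(1.99)=0.0147\cdots$ while $Q_2(t)$ rises toward values of order $\frac{1}{8t}$ before decaying, so $|Q_2(t)|$ (and a fortiori the claim that $e^{-t}t^{-1/2}Q_2(t)$ is monotone past $T$) cannot be read off from endpoint values, and proving the monotonicity directly requires controlling $Q_2'$, which is no easier than the original problem. The paper's mechanism is different: it writes $\frac{Q_2(t)}{t}=\int_0^\infty e^{-st}U(s)\,ds$ with the explicit density \eqref{14.3.0.0.1}, shows via the substitution $s=-1+\frac{(y-1)^2}{4y}$ that $U$ has a single sign change at $s_0=6.159\cdots$, uses that $\int_0^{s_0}e^{-st}U\,ds$ is decreasing in $t$ (Laplace transform of a positive function) and bounds $|U|\le0.09437$ on $(s_0,\infty)$; this is precisely the origin of the term $0.0944\,e^{-6.159T}$ in \eqref{eq:defd0}, which your argument has no way to produce. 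Until that step is supplied by such an argument (or a rigorous substitute), \eqref{eq:L11.1}--\eqref{eq:L11.2} are not established.
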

\begin{proof}
Using 
the integral representation for $q_0$, 
in the appendix  it is shown (see \eqref{A.q0}-\eqref{A.q0.2}) 
that
\begin{equation}
\label{eqq0M}
\Big | q_0 (t) \Big | \le 
\frac{|c| e^{-t}}{\sqrt{t}} 
\left \{ 1 + \frac{|c| e^{-t}}{2 \sqrt{T}} 
\left ( \Big | Q_2 (T) \Big | + 0.09044 e^{-s_0 T} \right ) \right \}
\end{equation}
\begin{equation}
\Big | \partial_c q_0 (t) \Big | \le 
\frac{e^{-t}}{\sqrt{t}} 
\left \{ 1 + \frac{|c| e^{-t}}{\sqrt{T}} 
\left ( | Q_2 (T) | + 0.09044 e^{-s_0 T} \right ) \right \}
\end{equation}
where $s_0 = 6.159\cdots$.
implying \eqref{eq:L11.1} and \eqref{eq:L11.2}.
Straightforward calculations show that 
$$ q_0^\prime (t) -\frac{q_0 (t)}{2t} = - \frac{c e^{-t}}{\sqrt{t}} (1 -I_0) 
- \frac{c^2 e^{-2t}}{2t} \left ( 1 -I_1 - \frac{I_1}{t} + \frac{I_1^2}{4 t^2} \right ) \, $$  
where $I_0$ is defined in (\ref{14.5}) and $I_1 = 2 t I_0$. The integral representation (\ref{14.5}) implies that $I_1: = 2 t I_0 \in (0, 1)$ and that
$ 1 - I_1 =\frac{3}{2 t} I_2$, where
$$ I_2 (t) = t \int_0^{\infty} \frac{e^{-st} ds}{(1+s)^{5/2}} $$  
and thus
$$ q_0^\prime (t) -\frac{q_0 (t)}{2t} = - \frac{c e^{-t}}{\sqrt{t}} (1 -I_0) 
- \frac{c^2 e^{-2t}}{4t^2} \left ( 3 I_2 -2 I_1 + \frac{I_1^2}{2t} \right ) \, $$  
From the equation above and its  $c$-derivative, 
and the fact that $I_0, I_1, I_2$ are in $(0, 1)$, \eqref{eq:L11.3} and \eqref{eq:L11.4} follow.
\end{proof}
\z With $d_0 $ given in \eqref{eq:defd0} let
\begin{equation}
\label{14.17.1}
d_q = \frac{|c|}{4} T^{-3/2} \left (1 + 
|c| d_0 \right );\  d_{q,c} =   
\frac{1}{4} T^{-3/2} \left ( 1 + 2 |c| d_0 \right )
\end{equation}

\begin{Lemma}
\label{lemq}
For $t \ge T \ge 1$,  we have
\begin{equation}
\label{14.17}
\left\| \int_{\infty}^t \frac{q_0 (\tau) e^{\tau}}{2 \tau} h (\tau) d\tau 
\right\|  
\le d_q  
\| h \|  \ , ~ 
\left\| \int_{\infty}^t \frac{\partial_c q_0 (\tau) e^{\tau}}{2 \tau} h (\tau) d\tau 
\right\|  
\le d_{q,c} \| h \| \ ,
\end{equation}
\end{Lemma}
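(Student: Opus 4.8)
\textbf{The plan} is to estimate the weighted sup-norm in \eqref{eqNormH} directly, rather than setting up any fixed-point argument: the quantity inside the norm is an explicit tail integral, so I only need a clean pointwise bound on its integrand followed by a single elementary tail estimate. Write $g(t) := \int_{\infty}^t \frac{q_0(\tau)e^{\tau}}{2\tau} h(\tau)\,d\tau$; the goal is $\sup_{t\ge T} t e^{2t}|g(t)| \le d_q\|h\|$. The first step is to reverse the limits, $g(t) = -\int_t^\infty \frac{q_0(\tau)e^{\tau}}{2\tau}h(\tau)\,d\tau$, so that $|g(t)| \le \int_t^\infty \frac{|q_0(\tau)|e^{\tau}}{2\tau}|h(\tau)|\,d\tau$ is an integral over $[t,\infty)\subset[T,\infty)$ where all earlier pointwise bounds are valid.

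\textbf{The key computation} is to feed in the two available estimates and track the powers of $\tau$ and the exponentials. From the definition \eqref{eqNormH} of the norm we have $|h(\tau)| \le \|h\|\,\tau^{-1}e^{-2\tau}$, and from \eqref{eq:L11.1} of Lemma \ref{lemq0} we have the \emph{pointwise} bound $|q_0(\tau)| \le \frac{|c|e^{-\tau}}{\sqrt{\tau}}\left(1+d_0|c|\right)$. Substituting these and collecting terms, the integrand is controlled by $\frac{|c|(1+d_0|c|)}{2}\|h\|\,\tau^{-5/2}e^{-2\tau}$: the three factors of $\tau$ combine as $\tau^{-1}\tau^{-1/2}\tau^{-1}=\tau^{-5/2}$, and the exponentials as $e^{-\tau}e^{\tau}e^{-2\tau}=e^{-2\tau}$. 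The crucial structural point is that the $e^{-\tau}$ decay of $q_0$ survives: together with the $e^{-2\tau}$ from $h$ and the $e^{\tau}$ already present, it leaves a net $e^{-2\tau}$ in the integrand, which is exactly what is needed to match the weight $e^{2t}$ in the target norm.

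\textbf{Assembly} then uses the sharp tail estimate: since $\tau^{-5/2}$ is decreasing, $\int_t^\infty \tau^{-5/2}e^{-2\tau}\,d\tau \le t^{-5/2}\int_t^\infty e^{-2\tau}\,d\tau = \tfrac12\,t^{-5/2}e^{-2t}$. Multiplying through by $t e^{2t}$ gives
\begin{equation}
t e^{2t}|g(t)| \le \frac{|c|(1+d_0|c|)}{2}\cdot t e^{2t}\cdot \frac{t^{-5/2}e^{-2t}}{2}\,\|h\| = \frac{|c|(1+d_0|c|)}{4}\,t^{-3/2}\|h\|,
\end{equation}
and since $t\ge T$ forces $t^{-3/2}\le T^{-3/2}$, the right side is at most $\frac{|c|}{4}T^{-3/2}(1+|c|d_0)\|h\| = d_q\|h\|$ by the definition \eqref{14.17.1}. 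Taking the supremum over $t\ge T$ yields the first inequality. The second inequality is identical, with $q_0$ replaced by $\partial_c q_0$ and \eqref{eq:L11.1} replaced by the bound \eqref{eq:L11.2} on $|\partial_c q_0|$, which carries the factor $(1+2d_0|c|)$ in place of $(1+d_0|c|)$ and so produces $d_{q,c}$.

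\textbf{The only delicate point}, rather than a genuine obstacle, is insisting on the pointwise decaying bound for $q_0$ instead of the flat constant $q_{0,m}$ of \eqref{eq:L11.1}: replacing $|q_0(\tau)|$ by the constant $q_{0,m}$ would leave the integrand decaying only like $\tau^{-2}e^{-\tau}$, whose tail integral behaves like $e^{-t}$ and is annihilated — indeed overwhelmed — by the weight $e^{2t}$, so the norm would diverge. Once the exponentially decaying form of $q_0$ is used, the estimate is a routine two-line calculation and no convergence or regularity issue arises.
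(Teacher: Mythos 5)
Your proof is correct and follows essentially the same route as the paper: both use the pointwise exponentially decaying bound on $q_0$ (resp.\ $\partial_c q_0$) from Lemma \ref{lemq0} together with $|h(\tau)|\le \tau^{-1}e^{-2\tau}\|h\|$, and then integrate the resulting $\tau^{-5/2}e^{-2\tau}$ tail. The paper's proof is just a terser version of yours, omitting the explicit tail-integral computation that produces the factor $\tfrac14 T^{-3/2}$ in $d_q$ and $d_{q,c}$.
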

\begin{proof}
The result follows from 
Lemma \ref{lemq0}, which implies
$$ 
\Big | \frac{e^{\tau} q_0 (\tau)}{2 \tau} \Big | \le \frac{c}{2 \tau^{3/2}} 
\left ( 1 + |c| d_0 \right ) $$ 
$$ 
\Big | \frac{e^{\tau} \partial_c q_0 (\tau)}{2 \tau} \Big | \le \frac{1}{2 \tau^{3/2}} 
\left ( 1 + 2 |c| d_0 \right ) $$ 
and noting that
$|h (t) | \le t^{-1}e^{-2t} \| h\|$.
\end{proof}
\z Let
\begin{equation}
\label{eqV}
V(t ; c) = - \frac{2}{c} t e^{t} B (t; c)
\end{equation}
\begin{Lemma}
\label{lemB}
$B$ defined in 
\eqref{14.8}
 satisfies the following inequalities
for $t \ge T \ge 1 $
\begin{equation}
\label{eqBm}
\Big | B(t; c) \Big | 
\le 
\frac{|c| e^{-t}}{2 t}  \left \{ 1 + \frac{3|c| e^{-t}}{4 t^{3/2}} \right \} 
\le \frac{|c| e^{-T}}{2 T}  \left \{ 1 + \frac{3|c| e^{-T}}{4 T^{3/2}} \right \} 
=:B_m 
\end{equation}
\begin{equation}
\label{eqBmc}
\Big |\partial_c B(t; c) \Big | \le 
\frac{e^{-t}}{2 t}  \left \{ 1 + \frac{3|c| e^{-t}}{2 t^{3/2}} \right \} 
\le \frac{e^{-T}}{2 T}  \left \{ 1 + \frac{3|c| e^{-T}}{2 T^{3/2}} \right \} 
=:B_{m,c} 
\end{equation}
\begin{equation}
\label{14.8.1.1.0.0.0}
\left | \partial_{t} \left [ \frac{2 t}{c} B(t; c )\right ]\right | 
\le 
e^{-t} \left ( 1 + \frac{c e^{-t}}{t^{3/2}} \right )
\le e^{-T} \left ( 1 + \frac{c e^{-T}}{T^{3/2}} \right )
=:B_{m,2,t} 
\end{equation}
\begin{equation}
\label{14.8.1.1.0.0}
\Big | \partial_{c} \left [ \frac{2 t}{c} B(t; c )\right ]\Big | 
\le \frac{3 e^{-2t}}{4 t^{3/2}} 
\le \frac{3 e^{-2T}}{4 T^{3/2}} 
=:B_{m,2,c} 
\end{equation}
$V(t; c)$ defined in \eqref{eqV} satisfies
\begin{equation}
\label{eqBmin}
V_{m} :=1 + \frac{3 c e^{-T}}{4 T^{3/2}} \ge 
 \Big | V(t; c) \Big |
\ge 1 - \frac{c e^{-T}}{4 T^{3/2} } 
=:V_{min}   
\end{equation}
\begin{equation}
\label{eqVp}
\Big | V^\prime (t; c) \Big | 
\le 
\frac{c e^{-T}}{2 T^{3/2}} =: V_{d,m} 
\end{equation}
\begin{equation}
\label{eqVc}
\Big | \partial_c V (t; c) \Big |
\le 
\frac{e^{-T}}{4 T^{3/2} } =:V_{c,m} 
\end{equation}

\end{Lemma}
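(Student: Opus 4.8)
The plan is to treat $V$ as the primitive object and to recover every bound on $B$ from it. The defining relation \eqref{eqV}, namely $V=-\tfrac{2}{c}\,t\,e^{t}B$, is an invertible change of normalization, so $B=-\tfrac{c\,e^{-t}}{2t}\,V$ and $\tfrac{2t}{c}B=-e^{-t}V$. Differentiating these two identities in $c$ and in $t$ gives $\partial_c B=-\tfrac{e^{-t}}{2t}\bigl(V+c\,\partial_c V\bigr)$, $\partial_t\bigl[\tfrac{2t}{c}B\bigr]=e^{-t}(V-V')$ and $\partial_c\bigl[\tfrac{2t}{c}B\bigr]=-e^{-t}\,\partial_c V$. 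Hence \eqref{eqBm}, \eqref{eqBmc}, \eqref{14.8.1.1.0.0.0} and \eqref{14.8.1.1.0.0} follow mechanically once $V$, $V'$ and $\partial_c V$ are controlled, using only that $e^{-t}t^{-\alpha}$ is decreasing on $[T,\infty)$ to pass from the running variable $t$ to the endpoint constants. The whole lemma thus reduces to proving \eqref{eqBmin}, \eqref{eqVp} and \eqref{eqVc}.

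I would next expose the algebraic structure of $V$. Comparing the definition \eqref{14.8} of $B$ with the operator appearing in \eqref{14.5.1.1.1} shows that $-2tB=\mathcal{M}[q_0]$, where $\mathcal{M}[u]:=\sqrt{t}\,u''-\tfrac{u'}{2\sqrt t}+\tfrac{u}{2t^{3/2}}$, and hence $V=\tfrac{e^{t}}{c}\,\mathcal{M}[q_0]$. Writing $q_0=c\,P_1+c^{2}P_2$ with $P_1=2\sqrt t\,e^{-t}I_0$ and $P_2=e^{-2t}\bigl(2J_0-I_0-I_0^{2}\bigr)$ (cf. \eqref{14.3.0}), linearity gives $V=e^{t}\mathcal{M}[P_1]+c\,e^{t}\mathcal{M}[P_2]$. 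The crucial point is the exact identity $\mathcal{M}[P_1]=e^{-t}$, which I would prove from the first-order relation $I_0'=(1+\tfrac1{2t})I_0-\tfrac1{2t}$ (immediate on differentiating the $\mathrm{erfc}$ form in \eqref{14.5}): it turns $P_1'$ and $P_1''$ into explicit combinations of $e^{-t}$ and $P_1$, after which the subleading terms of $\mathcal{M}[P_1]$ cancel identically. Consequently $V=1+c\,V_2$ with $V_2:=e^{t}\mathcal{M}[P_2]$ a function of $t$ alone, so that $\partial_c V=V_2$, $V'=c\,V_2'$, and $|V|=1+c\,V_2$ once the sign $V_2>0$ is settled by the estimates below.

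Everything then reduces to bounding $V_2$ and $V_2'$. Using the companion relation $J_0'=(2+\tfrac1{2t})J_0-\tfrac1{2t}$ for $J_0$, I would reduce $P_2'$, $P_2''$, and hence $V_2$ and $V_2'$, to explicit rational expressions in $I_0$ and $J_0$ carrying an overall factor $e^{-t}$. The estimates then proceed exactly as in Lemma \ref{lemq0}: the integral representations \eqref{14.5} and \eqref{14.5.0} force $0<I_0,J_0<1$, together with $0<I_1=2tI_0<1$, $0<I_2<1$ and $1-I_1=\tfrac{3}{2t}I_2$, which bounds every term. Two kinds of control are required: a pointwise-in-$t$ bound $|V_2(t)|\le\tfrac34\,e^{-t}t^{-3/2}$, which inserted into the identities of the first paragraph produces \eqref{eqBm} and \eqref{eqBmc}; and the absolute suprema of $|V_2|$ and $|V_2'|$ over $[T,\infty)$, which, since these quantities are decreasing in $t$, are attained at $t=T$ and yield $V_{c,m}$ and $V_{d,m}$. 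Feeding $V=1+cV_2$, $V'=cV_2'$, $\partial_c V=V_2$ back in then gives \eqref{eqBmin}, \eqref{eqVp} and \eqref{eqVc}.

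The main obstacle is the sharpness of the constants at the left endpoint. The coefficient $\tfrac14$ in $V_{c,m}$ is not the large-$t$ asymptotic coefficient of $t^{3/2}e^{t}V_2$, which equals $\tfrac12$; it is the markedly smaller value this reduced amplitude takes near $t=T\approx 2$, where the asymptotic expansion substantially overestimates. Thus $\tfrac14$ cannot be read off from leading-order asymptotics: one must estimate $I_0$ and $J_0$ quantitatively at the endpoint through their integral (equivalently $\mathrm{erfc}$) representations, and verify that $|V_2(t)|$ is genuinely decreasing on $[T,\infty)$ so that its supremum is the endpoint value. This endpoint analysis, together with the bookkeeping that keeps each auxiliary quantity inside $(0,1)$, is the delicate step; the dependence on $c$, by contrast, is trivial because $q_0$ is a quadratic polynomial in $c$.
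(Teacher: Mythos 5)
Your proposal is correct and is essentially the paper's own argument in different clothing: both rest on the exact identity $V=1+\frac{c e^{-t}}{4t^{3/2}}\left(3I_2-I_1\right)$ (your $V_2$ is $\partial_c V$, and your identity $\mathcal{M}[P_1]=e^{-t}$ is precisely the statement that the $O(c)$ part of $B$ is exactly $-\frac{c e^{-t}}{2t}$), together with explicit formulas for $V'$, $\partial_c V$ and the $B$-derivatives in terms of $I_1,I_2\in(0,1)$ — your route via the first-order relations for $I_0,J_0$ being an equivalent repackaging of the paper's direct differentiation of the Laplace representations. Your closing paragraph is, if anything, more careful than the paper: since $3I_2-I_1\to 2$ as $t\to\infty$, the constant $\frac14$ in \eqref{eqVc} does not follow from the crude pointwise bound $|3I_2-I_1|\le 3$ that suffices for the other estimates, so the endpoint evaluation and monotonicity of $|\partial_c V|$ on $[T,\infty)$ that you propose is exactly what is needed (and is left tacit in the paper); the one small caution is that for \eqref{14.8.1.1.0.0.0} you should bound the combination $V-V'=1+\frac{c e^{-t}}{4t^{3/2}}\left(3I_2+I_1\right)$ as a whole rather than via $|V|+|V'|$, since the latter only yields the coefficient $\frac54$ in place of the stated $1$.
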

\begin{proof}

Using  (\ref{14.3.0}) in (\ref{14.8}) it follows that
\begin{equation}
\label{Bexp}
B(t; c) = -\frac{c e^{-t}}{2 t}  - \frac{c^2 e^{-2t}}{8 t^{5/2}} \left ( 3 I_2 - I_1 \right ) 
\ , 
~~{\rm where} ~~I_1 = 2 t I_0 = t \int_0^\infty \frac{e^{-st}}{(1+s)^{3/2}} ds  \ , 
I_2 (t) = t \int_0^\infty \frac{e^{-st}ds}{(1+s)^{3/2}} 
\end{equation}
\begin{equation}
\partial_c B(t; c) = -\frac{e^{-t}}{2 t}  - \frac{c e^{-2t}}{4 t^{5/2}} \left ( 3 I_2 - I_1 \right ) 
\end{equation}
\begin{equation}
\partial_t \frac{2t B(t; c)}{c} = e^{-t} + \frac{c e^{-2t}}{4 t^{3/2}} \left ( 3 I_2 + I_1 \right ) 
\end{equation}
\begin{equation}
\partial_c \frac{2 t B (t; c)}{c} = -\frac{e^{-2t}}{4 t^{3/2}} (3 I_2 - I_1 ) 
\end{equation}
from which, again using the fact that $I_2 , I_1 \in (0, 1)$, \eqref{eqBm}--\eqref{14.8.1.1.0.0}.
To prove (\ref{eqBmin})--(\ref{eqVc}), 
note that
\begin{equation}
V (t;c) =: -\frac{2}{c} t e^{t} B (t; c)
= 1 + \frac{c e^{-t}}{4 t^{3/2}} \left ( 3 I_2 - I_1 \right ) \ ,   
\end{equation}
\begin{equation}
V^\prime (t; c) = -\frac{c e^{-t}}{2 t^{3/2}} I_1 (t) 
\end{equation}
\begin{equation}
\partial_c V (t; c) 
= 
\frac{e^{-t}}{4 t^{3/2}} \left ( 3 I_2 - I_1 \right ) 
\end{equation}
\end{proof}
\begin{Lemma}
\label{lemBterm}

For $T \ge 1 $ we have 
\begin{equation}
\label{14.18}
\L| \int_{\infty}^t e^{\tau} B(\tau) \mathcal{E}(\tau) d\tau 
\R|
\le d_B \| h \|,\ \ \ \  
\L| \int_{\infty}^t e^{\tau} \partial_c B(\tau) \mathcal{E}(\tau) d\tau 
\R| \le d_{B,c} \| h \| 
\end{equation}
where
\begin{equation}
d_B = 
\frac{|c| e^{-T}}{54 T^{3/2}}  
\left ( 1 + |c|  d_1 \right ),
 \  
d_{B,c} = \frac{e^{-T}}{54 T^{3/2}}  \left ( 1 + 2 |c| d_1 \right ),\ {\rm where} ~
d_1 =  
\frac{3 e^{-T}}{4 T^{3/2}} 
\end{equation}
\end{Lemma}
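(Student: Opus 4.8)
The plan is to bound each of the two integrals directly, pulling the absolute value inside and feeding in the pointwise estimates already established: Lemma~\ref{lemn1} for $\mathcal{E}$, and the bounds \eqref{eqBm}, \eqref{eqBmc} of Lemma~\ref{lemB} for $B$ and $\partial_c B$. Writing $\int_\infty^t = -\int_t^\infty$ and using the triangle inequality,
\begin{equation}
\left| \int_\infty^t e^\tau B(\tau)\mathcal{E}(\tau)\,d\tau \right| \le \int_t^\infty e^\tau |B(\tau)|\,|\mathcal{E}(\tau)|\,d\tau .
\end{equation}
Substituting $|\mathcal{E}(\tau)| \le \tfrac{1}{9}\tau^{-3/2} e^{-3\tau}\|h\|$ together with the bound for $|B|$, the weight $e^\tau$ exactly cancels the $e^{-\tau}$ coming from $B$, and the integrand factors into a leading piece $\tfrac{|c|}{18}\tau^{-5/2} e^{-3\tau}\|h\|$ plus a higher-order correction carrying the extra factor $\tfrac{3|c|}{4}\tau^{-3/2} e^{-\tau}$.

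Next I would integrate the two pieces separately. For $\tau \ge t$ one has $\tau^{-5/2} \le t^{-5/2}$ and $\tau^{-4}\le t^{-4}$, so the algebraic factors pull out of the integral and only $\int_t^\infty e^{-3\tau}\,d\tau = \tfrac13 e^{-3t}$ and $\int_t^\infty e^{-4\tau}\,d\tau = \tfrac14 e^{-4t}$ remain. This produces pointwise bounds of the form $\tfrac{|c|}{54}t^{-5/2}e^{-3t}\|h\|$ for the leading term and $\tfrac{|c|^2}{96}t^{-4}e^{-4t}\|h\|$ for the correction. The decisive step is then to measure these in the weighted norm \eqref{eqNormH}: multiplying by $t e^{2t}$ converts the two contributions into $\tfrac{|c|}{54}t^{-3/2}e^{-t}\|h\|$ and $\tfrac{|c|^2}{96}t^{-3}e^{-2t}\|h\|$, both strictly decreasing in $t$, so the supremum over $t \ge T$ is attained at the left endpoint $t=T$. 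Evaluating there reproduces the leading term $\tfrac{|c| e^{-T}}{54 T^{3/2}}$ of $d_B$, while the correction $\tfrac{|c|^2 e^{-2T}}{96 T^3}$ is dominated by the quoted value $\tfrac{|c| e^{-T}}{54 T^{3/2}}\,|c| d_1 = \tfrac{|c|^2 e^{-2T}}{72 T^3}$ since $\tfrac{1}{96}\le \tfrac{1}{72}$; this gives the first inequality.

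The second inequality is entirely parallel, with \eqref{eqBmc} replacing \eqref{eqBm}: the leading factor loses one power of $|c|$ (giving the prefactor $\tfrac{e^{-T}}{54 T^{3/2}}$ in $d_{B,c}$) and the correction factor doubles from $\tfrac34$ to $\tfrac32$, so the true correction $\tfrac{|c| e^{-2T}}{48 T^3}$ is again dominated by the stated $\tfrac{e^{-T}}{54 T^{3/2}}\,2|c| d_1 = \tfrac{|c| e^{-2T}}{36 T^3}$. I do not expect any genuine obstacle here: the argument is soft, and the only conceptual point is the observation that, because of the monotone decay of the profiles $t^{-3/2}e^{-t}$ and $t^{-3}e^{-2t}$, the weighted supremum is realized at $t=T$. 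The remaining work is purely the bookkeeping of powers of $\tau$ and the exponentials, together with checking the two elementary numerical inequalities $\tfrac{1}{96}\le\tfrac{1}{72}$ and $\tfrac{1}{48}\le\tfrac{1}{36}$ that guarantee the stated constants are upper bounds with a little slack.
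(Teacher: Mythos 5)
Your proposal is correct and follows essentially the same route as the paper, whose proof simply cites \eqref{14.15}, \eqref{eqBm} and \eqref{eqBmc} and says the result follows by integration; your version fills in exactly that computation, including the observation that the weighted supremum is attained at $t=T$. The only (harmless) difference is that you integrate the $e^{-4\tau}$ correction term exactly, obtaining slightly sharper constants ($\tfrac{1}{96}$ and $\tfrac{1}{48}$) than the stated $d_B$, $d_{B,c}$ require.
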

\begin{proof}
Using \eqref{14.15} \eqref{eqBm} 
and \eqref{eqBmc}
in Lemma \ref{lemB} the result follows immediately by integration.
\end{proof}
\begin{Lemma}
For $T > 0$ we have,
\begin{equation}
\label{14.19}
\L| \int_{t}^\infty \frac{h(\tau) 
\mathcal{E}(\tau)}{2 \tau} d\tau \R|
\le \frac{\| h \|^2}{90 T^{5/2}} 
e^{-3 T}   
\end{equation}
\end{Lemma}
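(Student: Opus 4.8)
The plan is to reduce everything to the two pointwise a priori bounds already in hand and a single elementary integral. First I would invoke the definition of the weighted norm in \eqref{eqNormH}, which gives the pointwise estimate $|h(\tau)| \le \tau^{-1} e^{-2\tau}\|h\|$ for $\tau \ge T$, together with the bound on $\mathcal{E}$ from Lemma \ref{lemn1}, namely $|\mathcal{E}(\tau)| \le \tfrac{1}{9}\tau^{-3/2} e^{-3\tau}\|h\|$. Multiplying these and dividing by $2\tau$ controls the integrand: $\left|\frac{h(\tau)\mathcal{E}(\tau)}{2\tau}\right| \le \frac{1}{18}\tau^{-7/2} e^{-5\tau}\|h\|^2$. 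With this in place the whole lemma collapses to bounding the scalar tail integral $\int_t^\infty \tau^{-7/2} e^{-5\tau}\,d\tau$, and the two appearances of $\|h\|$ are exactly what produces the quadratic factor $\|h\|^2$ on the right-hand side.

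For that integral I would simply use monotonicity of $\tau^{-7/2}$ on $[t,\infty)$, pulling the largest value $t^{-7/2}$ outside and integrating the exponential: $\int_t^\infty \tau^{-7/2}e^{-5\tau}\,d\tau \le t^{-7/2}\int_t^\infty e^{-5\tau}\,d\tau = \tfrac{1}{5}\, t^{-7/2} e^{-5t}$. Combined with the prefactor $\tfrac{1}{18}$ this yields $\tfrac{1}{90}\|h\|^2\, t^{-7/2} e^{-5t}$, where the constant $90 = 18\times 5$ is already the denominator appearing in the target bound.

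The last step is to convert this sharp $t$-dependent estimate into the coarser $T$-dependent form that is quoted. I would factor $t^{-7/2}e^{-5t} = \bigl(t^{-5/2}e^{-3t}\bigr)\bigl(t^{-1}e^{-2t}\bigr)$: on the relevant range $t \ge T \ge 1$ the second factor is at most $1$, while the first is decreasing and hence bounded by $T^{-5/2}e^{-3T}$, giving $\tfrac{1}{90}\|h\|^2\, T^{-5/2}e^{-3T}$ as claimed. The only point needing any care is precisely this downgrade from the $e^{-5t}/t^{7/2}$ that the integration genuinely produces to the uniform $e^{-3T}/T^{5/2}$ used throughout the contraction argument; it is legitimate because $T$ is bounded below ($T\ge 1.99$ in the application). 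I anticipate no real obstacle here — the statement is purely a quadratic tail estimate whose entire content is the bookkeeping of powers of $\tau$ and exponentials, and it is needed only to supply the $\|h\|^2$ term in the contraction estimate for the operator $\mathcal{N}$ in \eqref{14.7}.
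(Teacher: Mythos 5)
Your computation is exactly the paper's (its proof is a one--line ``follows by integration'' from the same two ingredients: $|h(\tau)|\le \tau^{-1}e^{-2\tau}\|h\|$ from \eqref{eqNormH} and $|\mathcal{E}(\tau)|\le \tfrac19 \tau^{-3/2}e^{-3\tau}\|h\|$ from Lemma~\ref{lemn1}), and your intermediate estimate $\bigl|\int_t^\infty \frac{h(\tau)\mathcal{E}(\tau)}{2\tau}\,d\tau\bigr|\le \frac{\|h\|^2}{90}\,t^{-7/2}e^{-5t}$ is precisely what is needed. The one slip is in your final paragraph: the left-hand side of \eqref{14.19} is not a pointwise absolute value but the weighted norm of \eqref{eqNormH} (the delimiters are double bars, $\|\cdot\|=\sup_{t\ge T}t\,e^{2t}|\cdot|$, applied to the function of $t$ defined by the tail integral). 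So rather than discarding the factor $t^{-1}e^{-2t}$ via ``$\le 1$'', you should multiply your bound by the weight $t\,e^{2t}$, which cancels exactly that factor and leaves $\frac{\|h\|^2}{90}\,t^{-5/2}e^{-3t}$, whose supremum over $t\ge T$ is the stated right-hand side; your own factorization $t^{-7/2}e^{-5t}=\bigl(t^{-5/2}e^{-3t}\bigr)\bigl(t^{-1}e^{-2t}\bigr)$ already isolates the reciprocal of the weight, so the fix is one line. As written, your conclusion is a constant pointwise bound, which is strictly weaker and would not suffice where the lemma is invoked: in \eqref{eq1Prop1} this quantity enters as a contribution to $\|\mathcal{N}[h]\|$ in the $\mathcal{H}$-norm, and a function admitting only a constant pointwise bound has infinite $\mathcal{H}$-norm.
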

\begin{proof}
Using Lemma \ref{lemn1} and the fact
$\Big | h (t) \Big | \le t^{-1} e^{-2t} \| h \|$ (which 
follows from the fact that $h\in\mathcal{H}$, the result follows by integration.
\end{proof}
\begin{Proposition}{\rm 
\label{Prop1}
For $|c| \le \frac{1}{4}$, $\epsilon =0.03 $ and $T \ge 1.99$,  
there exists a unique solution to the integral equation (\ref{14.7}) in a ball of size 
$ (1+\epsilon) \| h_0 \|$,  
implying that 
$\| h \| \le (1+\epsilon)\| h_0 \| 
\le 1.6667 \times 10^{-4} $. 
}\end{Proposition}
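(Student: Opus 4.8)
The plan is to read (\ref{14.7}) as a fixed-point equation $h=\mathcal{N}[h]$ in the Banach space $\mathcal{H}$ and to apply the contraction mapping theorem on the closed ball $B_\rho=\{h\in\mathcal{H}:\|h\|\le\rho\}$ with $\rho=(1+\epsilon)\|h_0\|$. The structural observation that makes everything routine is that, by (\ref{14.13}), $\mathcal{E}$ is an explicit \emph{linear} functional of $h$ (quantified in Lemma \ref{lemn1}). Hence $\mathcal{N}$ splits into the constant term $h_0$, two terms linear in $h$, namely $-\int_{\infty}^t\frac{q_0 e^\tau}{2\tau}h\,d\tau$ and $\int_{\infty}^t e^\tau B\mathcal{E}\,d\tau$, and one term $-\int_{\infty}^t\frac{h\mathcal{E}}{2\tau}d\tau$ that is quadratic in $h$.

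For the self-mapping property I would simply combine the estimates already in hand: Lemma \ref{lemq} bounds the first linear term by $d_q\|h\|$, Lemma \ref{lemBterm} bounds the second by $d_B\|h\|$, and (\ref{14.19}) bounds the quadratic term by $\frac{e^{-3T}}{90\,T^{5/2}}\|h\|^2$. Thus for $h\in B_\rho$,
\[
\|\mathcal{N}[h]\|\le \|h_0\|+(d_q+d_B)\rho+\frac{e^{-3T}}{90\,T^{5/2}}\rho^2 .
\]
Dividing the desired inequality $\|\mathcal{N}[h]\|\le\rho=(1+\epsilon)\|h_0\|$ by $\|h_0\|$ reduces self-mapping to
\[
(d_q+d_B)(1+\epsilon)+\frac{e^{-3T}}{90\,T^{5/2}}(1+\epsilon)^2\|h_0\|\le\epsilon .
\]
For the contraction estimate the $h_0$ terms cancel; writing $h_1\mathcal{E}_1-h_2\mathcal{E}_2=h_1(\mathcal{E}_1-\mathcal{E}_2)+(h_1-h_2)\mathcal{E}_2$ and exploiting the linearity of $\mathcal{E}$ in $h$, the same three lemmas yield
\[
\|\mathcal{N}[h_1]-\mathcal{N}[h_2]\|\le\left(d_q+d_B+\frac{e^{-3T}}{90\,T^{5/2}}\,2\rho\right)\|h_1-h_2\| ,
\]
so it remains only to check that the bracketed factor is strictly below $1$.

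Both inequalities I would discharge numerically on the worst-case parameters. All of $d_q,d_B,d_0$ and the factor $e^{-3T}T^{-5/2}$ are monotone decreasing in $T$ and increasing in $|c|$, so it suffices to test $T=1.99$ and $|c|=\frac14$. There one finds $d_q+d_B\approx 0.0225$, whence $(d_q+d_B)(1+\epsilon)\approx 0.023<0.03=\epsilon$; since $\|h_0\|=O(10^{-4})$ the quadratic corrections in both displays are utterly negligible, so the self-mapping inequality holds and the contraction factor stays well below $1$. The Banach fixed-point theorem then produces the unique $h\in B_\rho$, and $\|h\|\le(1+\epsilon)\|h_0\|$ combined with Lemma \ref{lemn2} (using $R_{3,m}\le 0.02057$, $R_{4,m}\le 0.0009042$ and $|c|\le\frac14$) gives $\|h_0\|\lesssim 1.61\times 10^{-4}$ and hence $\|h\|\le 1.6667\times 10^{-4}$.

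The only genuine obstacle is bookkeeping: the linear coefficient must clear the fairly tight margin $\epsilon=0.03$, and $(d_q+d_B)(1+\epsilon)\approx 0.023$ does so with little room to spare, so the constants $d_q$ and $d_B$ (and through them $d_0$ and the bound on $Q_2(T)$) have to be estimated carefully enough to be sure the inequality is strict. Everything else — the harmlessness of the quadratic terms and the validity of reducing to the endpoint $T=1.99$, $|c|=\frac14$ — follows automatically from the exponential weights built into the norm $\|\cdot\|$ of $\mathcal{H}$.
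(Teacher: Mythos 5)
Your proposal is correct and follows essentially the same route as the paper: the paper's proof likewise combines Lemma \ref{lemn2}, Lemma \ref{lemq}, Lemma \ref{lemBterm} and \eqref{14.19} to obtain exactly your two displayed inequalities (its contraction coefficient $\frac{e^{-3T}}{45\,T^{5/2}}(1+\epsilon)\|h_0\|$ is your $\frac{e^{-3T}}{90\,T^{5/2}}\cdot 2\rho$) and then invokes the contraction mapping theorem at the worst-case parameters $T=1.99$, $|c|=\tfrac14$. The only minor caveat is the value of $R_{4,m}$: the appendix bound \eqref{eqR4m} is $0.009042$ rather than the $0.0009042$ you quote (the paper itself is inconsistent on this point), but either value leaves the final estimate $\|h\|\le 1.6667\times 10^{-4}$ intact.
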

\begin{proof}
For 
$T \ge 1.99$, $|c| \le \frac{1}{4}$ and
$\epsilon= 0.03$, by Lemmas \ref{lemn2}, \ref{lemq} and  \ref{lemBterm}  and 
using $Q_2 (1.99)  = 0.0147\cdots$ and inequalities $R_{3,m}\le 0.0205666$, $R_{4,m} \le 0.009042$ 
for $T \ge 1.99$, see  (\ref{eqR3m}) and (\ref{eqR4m}) in
the appendix, we get
\begin{equation}
\label{eq1Prop1}
\| \mathcal{N} [h] \|
\le \| h_0 \| + (d_q+d_B ) (1+\epsilon) \| h_0 \|
+ \frac{e^{-3 T}}{90 T^{5/2}} (1+\epsilon)^2 \| h_0 \|^2 
\le (1+\epsilon) \| h_0 \|
\end{equation}
\begin{equation}
\label{eq2Prop1}
\| \mathcal{N} [h_1] - \mathcal{N} [h_2] \|
\le \left \{ d_q + d_B 
+ \frac{e^{-3 T}}{45 T^{5/2}} (1+\epsilon) \| h_0 \| \right \} 
\|h_1 - h_2 \|  
\end{equation}
implying contractivity of the integral operator in the stated ball.
\footnote{The values of the error function can be calculated using, for instance, \cite{Abramowitz}, 7.1.28.}
\end{proof}
\begin{Remark}\label{R7}{\rm 
For $|c| < \frac{1}{4} $ and $T \ge 1.99$, $h\in\mathcal{H}$ implies 
\begin{equation}
\label{hbound}
| h (t) | \le \| h \| t^{-1} e^{-2t} \le 1.6667 \times 10^{-4}  \times T^{-1} e^{-2 T} 
=: h_m \le 1.5651 \times 10^{-6};\ \forall t\ge T 
\end{equation}
}\end{Remark}
\begin{Remark}
\rm{
By uniqueness, this is the only solution
with $h \rightarrow 0$ as $t \rightarrow \infty$; we have proved that such a solution $h\in\mathcal{H}$ 
}\end{Remark}
\begin{Remark}\rm{  
Proposition \ref{Prop1} extends to 
any $c$ if $T$ is large enough, as seen  in the next proposition. This is likely to be useful in extending the present techniques
to more general initial conditions than (\ref{4})
}
\end{Remark}
\begin{Proposition}{\rm 
\label{Prop1ext}
For any $c$, there exists $T \ge 1$ large enough so that 
the integral equation (\ref{14.7}) has a unique solution 
$h \in \mathcal{H}$. 
}
\end{Proposition}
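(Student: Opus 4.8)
The plan is to re-run the contraction argument of Proposition \ref{Prop1} verbatim, but now tracking the dependence on $c$ and $T$ and sending $T\to\infty$ rather than fixing $|c|\le\frac14$. The first thing to notice is that none of the estimates feeding into \eqref{eq1Prop1}--\eqref{eq2Prop1} --- Lemmas \ref{lemn2}, \ref{lemn1}, \ref{lemq0}, \ref{lemq}, \ref{lemB}, \ref{lemBterm} and the bound \eqref{14.19} --- actually use $|c|\le\frac14$: each is stated for general $c$ and $T\ge 1$ with the $c$- and $T$-dependence kept explicit. The restriction $|c|\le\frac14$, $T\ge1.99$ entered only in the final numerical evaluation of the contraction constants. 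Hence, for an arbitrary but fixed $c$, the same two inequalities
\[\|\mathcal{N}[h]\|\le\|h_0\|+(d_q+d_B)(1+\epsilon)\|h_0\|+\tfrac{e^{-3T}}{90T^{5/2}}(1+\epsilon)^2\|h_0\|^2\]
and
\[\|\mathcal{N}[h_1]-\mathcal{N}[h_2]\|\le\Big\{d_q+d_B+\tfrac{e^{-3T}}{45T^{5/2}}(1+\epsilon)\|h_0\|\Big\}\|h_1-h_2\|\]
continue to hold on the ball of radius $(1+\epsilon)\|h_0\|$ in $\mathcal{H}$.

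Next I would fix any $\epsilon>0$ (say $\epsilon=0.03$) and examine the $T\to\infty$ behavior, with $c$ held fixed, of the four quantities $\|h_0\|$, $d_q$, $d_B$ and the quadratic coefficient $e^{-3T}T^{-5/2}$. The coefficients $d_0$ of \eqref{eq:defd0} and $d_1$ of Lemma \ref{lemBterm} tend to $0$, since $Q_2$ is bounded and they carry a factor $e^{-T}T^{-1/2}$; consequently $d_q\sim\frac{|c|}{4}T^{-3/2}\to0$ and $d_B\to0$, and the quadratic coefficient is exponentially small. The single point needing care is $\|h_0\|$: by Lemma \ref{lemn2} it is bounded by $|c|^3\big(\tfrac12 R_{3,m}+\tfrac{|c|e^{-T}}{3\sqrt T}R_{4,m}\big)$, which carries the fixed but possibly large factor $|c|^3$. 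The remedy is that $R_{3,m}$ and $R_{4,m}$ themselves decay in $T$: from \eqref{eqR3}--\eqref{eqR4}, using $I_0(t)\sim\frac{1}{2t}$ and $J_0(t)\sim\frac{1}{4t}$, the leading $O(t^{-1})$ terms in $R_3$ and $R_4$ cancel, so $R_3(t),R_4(t)\to0$; since these are continuous on $[1,\infty)$ this forces $R_{3,m}=\sup_{t\ge T}|R_3|\to0$ and likewise $R_{4,m}\to0$. Therefore $\|h_0\|\to0$ for every fixed $c$, and all four quantities vanish as $T\to\infty$.

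With that in hand, for each fixed $c$ I can choose $T=T(c)$ large enough that $(d_q+d_B)(1+\epsilon)+\frac{e^{-3T}}{90T^{5/2}}(1+\epsilon)^2\|h_0\|\le\epsilon$ and $d_q+d_B+\frac{e^{-3T}}{45T^{5/2}}(1+\epsilon)\|h_0\|<1$ both hold. These are precisely the conditions making $\mathcal{N}$ map the ball of radius $(1+\epsilon)\|h_0\|$ into itself and act as a contraction there, so the contraction mapping theorem yields a unique fixed point $h$ in that ball. To upgrade uniqueness from ``within the ball'' to ``within $\mathcal{H}$'', I would invoke the a priori structure already established: any $h\in\mathcal{H}$ decays like $t^{-1}e^{-2t}$, hence through \eqref{14.13} and \eqref{14.1} it corresponds to a solution $g$ of \eqref{N.2} with $g\to0$, whose asymptotics (Lemma \ref{LemN1}) fix the constant $C=\sqrt2\,a^{3/2}c$; by Remark \ref{remHunique} such a decaying $g$ is unique once $a,b,c$ are given, so at most one $h\in\mathcal{H}$ exists and it must be the fixed point found above.

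The only genuinely non-mechanical step is the cancellation establishing $R_{3,m},R_{4,m}\to0$, which is what lets the fixed, arbitrarily large factor $|c|^3$ in $\|h_0\|$ be defeated by increasing $T$; everything else is the bookkeeping of Proposition \ref{Prop1} with $T$ playing the role of the small parameter in place of the numerical smallness of the constants. I expect this to be the main (though mild) obstacle, together with the routine verification that the general-$c$ versions of Lemmas \ref{lemn2}--\ref{lemBterm} are indeed free of the $|c|\le\frac14$ hypothesis.
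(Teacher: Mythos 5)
Your proposal is correct and follows essentially the same route as the paper, whose proof simply observes that $d_q$, $d_B$ and $\|h_0\|$ are decreasing in $T$ for fixed $c$, so that \eqref{eq1Prop1}--\eqref{eq2Prop1} are satisfied for $T$ large enough. The one place you work harder than necessary is the cancellation argument showing $R_{3,m},R_{4,m}\to 0$: after dividing \eqref{eq1Prop1} by $\|h_0\|$, the norm $\|h_0\|$ enters the self-map and contraction conditions only through the exponentially small products $e^{-3T}T^{-5/2}\|h_0\|$, so mere boundedness of $\|h_0\|$ in $T$ (immediate from Lemma \ref{lemn2}) already defeats the fixed factor $|c|^3$.
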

\begin{proof}
It is clear from Lemmas \ref{lemn2}, \ref{lemq}, \ref{lemBterm}
that for any given $c$, 
the functions $d_q$, $d_B$ and $\|h_0 \|$ are decreasing in $T$. 
Thus, the conditions (\ref{eq1Prop1})-(\ref{eq2Prop1}) are met for any fixed $\epsilon > 0$.
\end{proof}
\begin{Lemma}
For $0 < a \le a_r $, $|c| < \frac{1}{4}$ and $t \ge T \ge 1.99$, 
the function $\mathcal{E}$ (see \eqref{14.13})
satisfies following bounds
$$ 
\Big | 
\sqrt{\frac{a}{2t}}\mathcal{E} \Big | \le 
\sqrt{\frac{a_r}{2}} \frac{1}{9 t^2} e^{-3 t} \| h \|
\le 1.69 \times 10^{-5} t^{-2} e^{-3 t} $$     
$$ 
\Big | 
\frac{d}{dx} \sqrt{\frac{a}{2t}}\mathcal{E} \Big | \le 
\frac{a_r}{3}  e^{-3 t} t^{-3/2} \| h \| 
\le 9.20 \times 10^{-5} t^{-3/2} e^{-3 t}
$$
$$ \Big | 
\frac{d^2}{dx^2} \sqrt{\frac{a}{2t}}\mathcal{E} \Big | \le 
\sqrt{2} a_r^{3/2} \| h \| t^{-1} e^{-3t} \le    
5.02 \times 10^{-4} t^{-1} e^{-3t} 
$$
\end{Lemma}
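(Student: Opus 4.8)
The plan is to reduce all three estimates to the single differentiated identity \eqref{eqRelationh},
$$\frac{d^2}{dx^2}\sqrt{\frac{a}{2t(x)}}\,\mathcal{E}(t(x)) = \sqrt{2}\,a^{3/2}\,e^{-t(x)}\,h(t(x)),$$
combined with three facts already in hand: the pointwise bound $|\mathcal{E}(t)| \le \frac{1}{9}t^{-3/2}e^{-3t}\|h\|$ from Lemma \ref{lemn1}, the membership bound $|h(t)| \le t^{-1}e^{-2t}\|h\|$ for $h\in\mathcal{H}$, and the size estimate $\|h\| \le 1.6667\times 10^{-4}$ from Proposition \ref{Prop1}. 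Throughout I would use $a \le a_r$ to replace $a$ by its upper bound, and the elementary relation $t'(x) = a(x+b/a) = \sqrt{2at}$ (valid since $x > -b/a$ when $t>0$) to convert between $x$- and $t$-integration.

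The bound on the quantity itself is immediate: factoring out the prefactor gives $\sqrt{a/(2t)}\,|\mathcal{E}| \le \sqrt{a/2}\cdot\frac{1}{9}t^{-2}e^{-3t}\|h\|$ directly from Lemma \ref{lemn1}, and $a \le a_r$ together with the numerical values yields $1.69\times 10^{-5}t^{-2}e^{-3t}$. The second-derivative bound is equally direct: taking absolute values in \eqref{eqRelationh} and inserting $|h(t)| \le t^{-1}e^{-2t}\|h\|$ gives $\sqrt{2}\,a^{3/2}e^{-t}\cdot t^{-1}e^{-2t}\|h\| = \sqrt{2}\,a^{3/2}\|h\|\,t^{-1}e^{-3t}$, and monotonicity in $a$ plus the numerics produce $5.02\times 10^{-4}t^{-1}e^{-3t}$.

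The one step needing an actual argument is the first-derivative bound. Writing $G(x) := \sqrt{a/(2t(x))}\,\mathcal{E}(t(x))$, I would first note that $G'(x)\to 0$ as $x\to\infty$; this is the content of Remark \ref{remh} together with the matched asymptotics of $q$ and $q_0$ (Remark \ref{remq} and \eqref{eqasymptq0}), since $\mathcal{E} = q - q_0$. Integrating \eqref{eqRelationh} from $+\infty$ then gives
$$G'(x) = -\int_x^\infty \sqrt{2}\,a^{3/2}e^{-t(y)}h(t(y))\,dy,$$
and the substitution $s = t(y)$, $dy = ds/\sqrt{2as}$, collapses the constant via $\sqrt{2}a^{3/2}/\sqrt{2as} = a\,s^{-1/2}$ to
$$G'(x) = -a\int_t^\infty s^{-1/2}e^{-s}h(s)\,ds.$$
Using $|h(s)| \le s^{-1}e^{-2s}\|h\|$ and the crude estimate $\int_t^\infty s^{-3/2}e^{-3s}\,ds \le t^{-3/2}\int_t^\infty e^{-3s}\,ds = \frac{1}{3}t^{-3/2}e^{-3t}$ gives $|G'(x)| \le \frac{a}{3}t^{-3/2}e^{-3t}\|h\|$, whence $\frac{a_r}{3}$ and the numerics give $9.20\times 10^{-5}t^{-3/2}e^{-3t}$.

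The only genuinely non-routine point is justifying the integration from infinity in the last step, i.e. the vanishing of $G'$ at infinity; once that is granted, all three inequalities are substitutions into already-established estimates followed by the same numerical evaluation of $a_r$ and $\|h\|$. I therefore expect no real obstacle beyond carefully invoking the asymptotic results of Section \ref{S2} to license the improper integral.
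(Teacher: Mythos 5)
Your proposal is correct and takes essentially the same route as the paper: both reduce the three bounds to the identities $\sqrt{a/(2t)}\,\mathcal{E}$ (estimated via Lemma \ref{lemn1}), $\frac{d}{dx}\sqrt{a/(2t)}\,\mathcal{E} = a\int_{\infty}^{t}s^{-1/2}e^{-s}h(s)\,ds$, and $\frac{d^2}{dx^2}\sqrt{a/(2t)}\,\mathcal{E} = \sqrt{2}a^{3/2}e^{-t}h$, combined with $|h(s)|\le s^{-1}e^{-2s}\|h\|$, $a\le a_r$ and $\|h\|\le 1.6667\times 10^{-4}$. The only cosmetic difference is that the paper obtains the first-derivative formula by differentiating \eqref{14.13} directly (so the vanishing at infinity is built in and needs no separate justification), whereas you recover it by integrating \eqref{eqRelationh} from infinity and appealing to Remark \ref{remh}; both yield the same integral identity and the same numerics.
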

\begin{proof}
From (\ref{14.13}), the first statement follows immediately.
The second statement follows from noting that the transformation 
(\ref{14.1}) implies that
$$ \frac{d}{dx} \sqrt{\frac{a}{2t}} \mathcal{E} =
a \left ( \mathcal{E}^\prime - \frac{1}{2t} \mathcal{E} \right )
= a \int_{\infty}^t \tau^{-1/2} e^{-\tau} h(\tau)  d\tau $$
Furthermore, we can check
$$ \frac{d^2}{dx^2} \sqrt{\frac{a}{2t (x)}} \mathcal{E} (t (x)) 
= \sqrt{2} a^{3/2} e^{-t} h, $$  
and hence the third statement follows.
\end{proof}
\begin{Lemma}
\label{lemhod}
The function $h$ satisfies
$$ \| h^\prime (\cdot; c) \| \le 
2 d_q \| h \| 
+ \frac{B_m}{9 T^{1/2}} \| h \| 
+ \frac{e^{-3 T}}{18 T^{5/2}} \| h \|^2
+|c|^3 \left ( R_{3,m}   
+\frac{|c| e^{-T}}{T^{1/2}} R_{4,m} \right )
$$
\end{Lemma}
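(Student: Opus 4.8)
The plan is to estimate $h^\prime$ directly from its differential form \eqref{14.5.2} rather than from the integral equation \eqref{14.7}, applying the weight $t e^{2t}$ and bounding the four resulting terms separately. Writing \eqref{14.5.2} as
\[
h^\prime = -\frac{q_0 e^{t}}{2t}\,h + e^{t} B\,\mathcal{E} - \frac{\mathcal{E}\,h}{2t} - t^{1/2} e^{t} R,
\]
I would multiply through by $t e^{2t}$, take absolute values, and use the triangle inequality, so that $\| h^\prime(\cdot;c)\| = \sup_{t\ge T} t e^{2t} |h^\prime(t)|$ is dominated by the sum of the four corresponding suprema. Throughout I would use the elementary pointwise bound $|h(t)| \le \| h \|\, t^{-1} e^{-2t}$ from \eqref{hbound}, together with the estimate $|\mathcal{E}(t)| \le \tfrac{1}{9} t^{-3/2} e^{-3t}\| h \|$ of Lemma \ref{lemn1} (see \eqref{14.15}).

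For the first term, combining $|q_0| \le \tfrac{|c| e^{-t}}{\sqrt{t}}(1 + d_0 |c|)$ from \eqref{eq:L11.1} with the bound on $|h|$ gives $t e^{2t}\cdot \tfrac{|q_0| e^{t}}{2t}|h| \le \tfrac{|c|(1+d_0|c|)}{2 t^{3/2}}\| h \|$, whose supremum over $t \ge T$ is exactly $2 d_q \| h \|$ by the definition \eqref{14.17.1} of $d_q$. For the second term, using $|B| \le B_m$ from \eqref{eqBm} and the bound on $|\mathcal{E}|$ gives $t e^{2t}\cdot e^{t}|B||\mathcal{E}| \le \tfrac{B_m}{9} t^{-1/2}\| h \|$, with supremum $\tfrac{B_m}{9 T^{1/2}}\| h \|$. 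For the third term, inserting both the $\mathcal{E}$ bound and the $h$ bound yields $t e^{2t}\cdot \tfrac{|\mathcal{E}||h|}{2t} \le \tfrac{1}{18} t^{-5/2} e^{-3t}\| h \|^2$, with supremum $\tfrac{e^{-3T}}{18 T^{5/2}}\| h \|^2$. Finally, for the remainder term I would insert the representation $R = \xi^3 R_3 + \xi^4 R_4$ with $\xi = c\, t^{-1/2} e^{-t}$ from \eqref{14.5R}, so that $t e^{2t}\cdot t^{1/2} e^{t}|R| \le |c|^3 |R_3| + |c|^4 t^{-1/2} e^{-t}|R_4|$, whose supremum over $t \ge T$ is $|c|^3\bigl(R_{3,m} + \tfrac{|c| e^{-T}}{T^{1/2}} R_{4,m}\bigr)$ using the uniform bounds $R_{3,m}, R_{4,m}$. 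Adding the four contributions gives the claimed inequality.

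There is no genuine analytic obstacle here: every ingredient has already been assembled in Lemmas \ref{lemq0}, \ref{lemn1} and \ref{lemB} and in the remainder expansion \eqref{14.5R}, and the argument is a bookkeeping exercise matching each term of \eqref{14.5.2} to its counterpart in the stated bound. The only point requiring mild care is confirming that the various powers of $t$ and the exponential weights combine correctly: each of the first three suprema is attained at the endpoint $t = T$ because the relevant bounding expressions ($t^{-3/2}$, $t^{-1/2}$, and $t^{-5/2} e^{-3t}$ respectively) decrease in $t$ on $[T,\infty)$, while the remainder contribution is controlled directly by the uniform bounds $R_{3,m}$ and $R_{4,m}$.
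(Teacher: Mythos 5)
Your proposal is correct and follows essentially the same route as the paper: the paper also reads off $h'$ from the differential form \eqref{14.5.2}, applies the weight $te^{2t}$, and bounds the four terms using the $q_0$, $B$, $\mathcal{E}$ and remainder estimates, with the first term matching $2d_q\|h\|$ via \eqref{14.17.1}. Your write-up is in fact more explicit than the paper's one-line verification.
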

\begin{proof}
We note from 
\eqref{14.5.2}
$$ \| h^\prime \| \le \sup_{t \ge T} \frac{|q_0 | e^{t}}{2t} \| h \|
+ \sup_{t \ge T} \frac{1}{9 t} \Big | B (t) \Big |  \| h \| 
+ \frac{e^{-3 T}}{18 T^{5/2}} \| h \|^2  
|c|^3 R_{3,m}   
+c^4 \frac{e^{-t}}{t^{1/2} } R_{4,m}   
$$
Using Lemma \ref{lemq} and and \ref{lemB} we get the statement follows.
\end{proof}
\begin{Lemma} 
\label{lemhcb}
The function $h$ satisfies
$$ \| \partial_c h ( \cdot , c) \|
\le 
\left (1 -  
d_q - d_B - \frac{e^{-3 T}}{45 T^{5/2}} \| h \| \right )^{-1} 
\left \{ \| \partial_c h_0 \| + \left ( d_{q,c} + d_{B, c} \right ) \|h \|_\infty \right \}
$$
\end{Lemma}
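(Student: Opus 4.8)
The plan is to obtain $\partial_c h$ as the solution of the linear integral equation produced by differentiating the fixed-point relation $h=\mathcal{N}[h]$ in \eqref{14.7} with respect to the parameter $c$, and then to estimate it by inverting $(I-D\mathcal{N}[h])$ through a Neumann series. The map $\mathcal{N}$ depends on $c$ both explicitly---through $h_0$, $q_0$ and $B$---and implicitly through its argument $h$, where I note that by \eqref{14.13} the quantity $\mathcal{E}$ is a $c$-independent linear functional of $h$, so all of its $c$-dependence passes through $h$. Writing $\partial_c\mathcal{N}$ for the partial derivative taken with $h$ held fixed and $D\mathcal{N}[h]$ for the Fréchet derivative of $\mathcal{N}$ in $h$, differentiation of $h=\mathcal{N}[h]$ yields
\[
\partial_c h = (\partial_c\mathcal{N})[h] + D\mathcal{N}[h]\,\partial_c h .
\]
First I would record the differentiability of the fixed point $h(\cdot;c)$ in $\mathcal{H}$, which follows from the standard smooth-dependence-on-parameters principle for uniform contractions, the uniform contractivity being supplied by Proposition \ref{Prop1}.

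Next I would estimate the explicit part. Holding $h$ fixed, and observing that the last integral in \eqref{14.7} carries no explicit $c$-dependence, one has
\[
(\partial_c\mathcal{N})[h](t)= \partial_c h_0(t) - \int_{\infty}^t \frac{\partial_c q_0(\tau)\, e^{\tau}}{2\tau}\,h(\tau)\,d\tau + \int_{\infty}^t e^{\tau}\,\partial_c B(\tau)\,\mathcal{E}(\tau)\,d\tau .
\]
Applying \eqref{14.16.0} for $\|\partial_c h_0\|$, the second inequality of \eqref{14.17} for the $q_0$-term, and the second inequality of \eqref{14.18} for the $B$-term gives immediately
\[
\|(\partial_c\mathcal{N})[h]\| \le \|\partial_c h_0\| + (d_{q,c}+d_{B,c})\,\|h\| .
\]

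Then I would bound the operator norm of the linearization. Using the linearity of $\mathcal{E}$ in $h$, the derivative $D\mathcal{N}[h]$ acting on a direction $k\in\mathcal{H}$ is
\[
D\mathcal{N}[h]\,k = -\int_{\infty}^t \frac{q_0 e^{\tau}}{2\tau}\,k\,d\tau + \int_{\infty}^t e^{\tau} B\,\mathcal{E}[k]\,d\tau - \int_{\infty}^t \frac{k\,\mathcal{E}[h]+h\,\mathcal{E}[k]}{2\tau}\,d\tau .
\]
The first two integrals are controlled by $d_q\|k\|$ and $d_B\|k\|$ via Lemmas \ref{lemq} and \ref{lemBterm}, while the bilinear last term is estimated exactly as in \eqref{14.19}: each of its two summands contributes $\tfrac{e^{-3T}}{90 T^{5/2}}\|h\|\,\|k\|$, for a total of $\tfrac{e^{-3T}}{45 T^{5/2}}\|h\|\,\|k\|$. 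Hence
\[
\|D\mathcal{N}[h]\| \le d_q + d_B + \frac{e^{-3T}}{45 T^{5/2}}\|h\| =: \kappa ,
\]
and $\kappa<1$ by the contraction estimate \eqref{eq2Prop1} together with $\|h\|\le(1+\epsilon)\|h_0\|$. Inverting $(I-D\mathcal{N}[h])$ by the Neumann series gives $\|(I-D\mathcal{N}[h])^{-1}\|\le (1-\kappa)^{-1}$, and combining with the bound on $(\partial_c\mathcal{N})[h]$ yields the asserted inequality.

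The only genuinely delicate points are the rigorous justification that the formal $c$-derivative above is the actual $\partial_c h$ in $\mathcal{H}$---which is where the uniform contraction of Proposition \ref{Prop1} is indispensable---and the bookkeeping in the linearized bilinear term, where differentiating the $\tfrac{h\mathcal{E}}{2\tau}$ integral in both factors produces the constant $\tfrac{e^{-3T}}{45 T^{5/2}}$, precisely twice the $\tfrac{1}{90}$ of \eqref{14.19}. Everything else is a direct application of the preceding lemmas.
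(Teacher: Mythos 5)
Your proposal is correct and follows essentially the same route as the paper: the paper likewise differentiates the integral equation \eqref{14.7} in $c$ (its equation \eqref{22} is exactly your explicit-plus-linearized decomposition written out term by term), bounds the explicit terms by $\|\partial_c h_0\|+(d_{q,c}+d_{B,c})\|h\|$ and the terms containing $\partial_c h$ by $\bigl(d_q+d_B+\tfrac{e^{-3T}}{45T^{5/2}}\|h\|\bigr)\|\partial_c h\|$, and then rearranges, which is the one-step version of your Neumann-series inversion. Your explicit attention to the smooth dependence of the fixed point on $c$ is a point the paper leaves implicit, and your use of the weighted norm $\|h\|$ in place of the paper's $\|h\|_\infty$ in the final bound is in fact the reading consistent with Lemmas \ref{lemq} and \ref{lemBterm}.
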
 
\begin{proof}
We note that (\ref{14.7}) implies
\begin{multline}
\label{22}
\partial_{c} h (t; c) = \partial_c h_0 (t; c) 
- \int_{\infty}^t \frac{e^{\tau}}{2 \tau} \partial_c q_0 (\tau; c) h(\tau; c)  
+ \int_{\infty}^t e^{\tau} \partial_c B (\tau; c) E (\tau; c) \\ 
- \int_{\infty}^t \frac{e^{\tau}}{2 \tau} q_0 (\tau; c) \partial_c h(\tau; c) d\tau  
- \int_{\infty}^t e^{\tau} B (\tau; c) \partial_c E (\tau; c) d\tau \\
- \int_{\infty}^t \partial_c h (\tau; c) E (\tau; c) \frac{d\tau}{2 \tau} 
- \int_{\infty}^t h (\tau; c) \partial_c E (\tau; c) \frac{d\tau}{2 \tau} 
\end{multline}
Applying Lemmas \ref{lemn1}, \ref{lemn2}, 
\ref{lemq}
and 
\ref{lemB}
to  (\ref{22}) we get
\begin{equation}
\| \partial_c h (t; c) \|
\le \| \partial_c h_0 \| + (d_{q,c} + d_{B, c} ) \| h \| 
+ \left (d_q + d_B + \frac{e^{-3 T}}{45 T^{5/2}} \| h \| \right )  
\| \partial_c h (\cdot, c) \|_{\infty}     
\end{equation}
\end{proof}
\begin{Remark}
Since $h\in\mathcal{H}$ Lemmas \ref{lemhod} and
\ref{lemhcb} imply
\begin{equation}
\label{eqdm}
\Big | h^\prime (T; c) \Big | \le 
T^{-1} e^{-2 T} \left \{ 2 d_q \| h \| 
+ \frac{B_m}{9} \| h \| 
+ \frac{e^{- 3 T}}{18 T^{5/2}} \| h \|^2
+|c|^3 \left ( R_{3,m}   
+\frac{|c| e^{-T}}{T^{1/2}} R_{4,m} \right )
\right \}
=:h_{d,m}
\end{equation}
\begin{equation}
\label{eqhcm}
\Big | \partial_c h (T; c)  \Big | 
\le T^{-1} e^{-2 T} 
\left (1 -  
d_q - d_B - \frac{e^{-3 T}}{45 T^{5/2}} \| h \| \right )^{-1} 
\left \{ \| \partial_c h_0 \| + \left ( d_{q,c} + d_{B, c} \right ) 
\|h \|_\infty \right \}
=: h_{c,m}
\end{equation}
\end{Remark}

\subsection{End of proof of Proposition \ref{Prop3}}

For given $|c| < \frac{1}{4}$ and $T \ge 1.99$,
Proposition \ref{Prop1} implies that
$\mathcal{E}$ satisfies (\ref{14.4}) for $t \ge T \ge 1.99$. 
This implies the existence of a solution $q=q_0 +\mathcal{E}$ 
satisfying  (\ref{14.2})
and having $t^{-1/2} e^{-t}$ decay as $t \rightarrow \infty$. 
From Lemmas \ref{LemN1} and \ref{LemN2},
this is the only 
solution for which $\frac{q}{\sqrt{t}} \rightarrow 0$ 
as $t \rightarrow \infty$,
Proposition \ref{Prop3} follows 
since the transformation (\ref{14.1}) for $a >0$ in the regime $t \ge T 
\ge 1.99$ 
guarantees that $F$ will satisfy
(\ref{3}) for $x \ge \frac{5}{2}$. 

\section{Matching and proof of Proposition \ref{Prop4}}
\label{S3}

In order for the two representations \eqref{12.1.0.0} and \eqref{14.1} to coincide at $x = \frac{5}{2}$ 
we match $F$ and its two derivatives; from (\ref{14.3.0}), (\ref{14.3.1}) and (\ref{14.5.1.1.1})  we get
\begin{equation}
\label{16}
a = F^\prime (\tfrac{5}{2}) 
- a \left ( q_0^\prime (t_m; c) - \frac{q_0 (t_m; c)}{2t_m} \right ) -  
a \int_{\infty}^{t_m} \frac{e^{-\tau}}{\sqrt{\tau}} h (\tau; c) d\tau 
=:N_1 (a, b, c)
\end{equation}
\begin{equation}
\label{15}
b = F(\tfrac{5}{2}) - \frac{5}{2} N_1 - \sqrt{\frac{a}{2 t_m}} q_0 (t_m; c)   
- \sqrt{\frac{a}{2}} 
\int_{\infty}^{t_m} \tau^{-1/2} \int_{\infty}^\tau s^{-1/2} e^{-s} h(s; c) ds 
:=N_2 (a, b, c)
\end{equation}
\begin{equation}
\label{17}
c =  
\frac{1}{\sqrt{2} a^{3/2}}
\left [ V (t_m; c) + \frac{1}{c} h (t_m; c) \right ]^{-1}  
e^{t_m} F^{\prime \prime} (\tfrac{5}{2}) =: N_3 (a, b, c) 
\end{equation}      
\begin{Definition}
We define ${\bf A} = \left ( a , \frac{b}{2}, \frac{c}{2} \right )$, ${\bf A}_0 = \left ( a_0, \frac{b_0}{2}, \frac{c_0}{2} \right )$
and ${\bf N} ({\bf A} ) = \left ( N_1, \frac{1}{2} N_2, 
\frac{1}{2} N_3 \right )$. Define also
$$\mathcal{S}_A := \left \{ 
\| {\bf A} - {\bf A}_0 \|_2 \le \rho_0:=5 \times 10^{-5} \right \}$$
where $\|. \|_2$ is the Euclidean norm and let
\begin{equation}
\label{17.3.0}
{\bf J} = {\begin{pmatrix} \partial_a N_1 & 2 \partial_b N_1  & 2 \partial_c N_1 \cr
         \frac{1}{2} \partial_a N_2 &  \partial_b N_2 & \partial_c N_2 \cr
         \frac{1}{2} \partial_a N_3 &  \partial_b N_3 & \partial_c N_3 
\end{pmatrix}}
\end{equation}
\end{Definition}
\begin{Note}{\rm }
  We see that ${\bf A} \in \mathcal{S}_A $ implies $(a, b, c) \in \mathcal{S}$.  
The system of equations (\ref{16})-(\ref{17}) is written as
\begin{equation}
\label{15.0.0}
{\bf A} = \mathbf{N} [{\bf A} ]
\end{equation}
We define
${\bf J} = \frac{\partial{\bf N}}{\partial {\bf A}}$ to be
the Jacobian and $\|{\bf J} \|_2$ denotes the
$l^2 $ (Euclidean) norm of the matrix. We note that
\begin{multline}
\label{17.3.0.0}
\| J \|^2_2 = 
\left ( \partial_a N_1 \right )^2 + 4 \left (\partial_b N_1\right )^2  + 4 \left ( \partial_c N_1 \right )^2
+\frac{1}{4} \left ( \partial_a N_2 \right )^2 
+\left (\partial_b N_2\right )^2  + \left ( \partial_c N_2 \right )^2
\\
+\frac{1}{4} \left ( \partial_a N_3 \right )^2 
+\left (\partial_b N_3\right )^2  + \left ( \partial_c N_3 \right )^2
\end{multline}
\end{Note}

\begin{Lemma}
\label{lemMatch}{\rm 
The inequalities
\begin{equation} \label{162}
\| {\bf A}_0 - {\bf N} [{\bf A}_0] \|_2 \le (1-\alpha) \rho_0
\end{equation}
\begin{equation}\label{163}
\sup_{{\bf A} \in \mathcal{S}_A} \| {\bf J} \|_2 \le \alpha < 1
\end{equation}
for some $\alpha \in (0, 1)$ imply that
${\bf A} = \mathbf{N} [ {\bf A} ]$
has a unique solution for ${\bf A} \in \mathcal{S}_A$.}
\end{Lemma}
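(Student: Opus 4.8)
The plan is to recognize Lemma \ref{lemMatch} as a direct application of the Banach fixed point theorem, with the Jacobian bound \eqref{163} supplying the contraction constant and the residual bound \eqref{162} guaranteeing that ${\bf N}$ maps $\mathcal{S}_A$ into itself. The set $\mathcal{S}_A$ is a closed Euclidean ball in $\RR^3$, hence a complete metric space, and, crucially, it is convex; this convexity is precisely what lets the pointwise bound on $\|{\bf J}\|_2$ be upgraded to a global Lipschitz estimate for ${\bf N}$.

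First I would check that ${\bf N}$ is well-defined and continuously differentiable on $\mathcal{S}_A$. Well-definedness holds because ${\bf A}\in\mathcal{S}_A$ forces $(a,b,c)\in\mathcal{S}$, so $a>0$, $|c|<\tfrac14$ and $t_m\ge t_{m,l}>1.99$ by Definition \ref{Def1}; Proposition \ref{Prop1} then furnishes the unique $h$ entering \eqref{16}--\eqref{17}. Differentiability reduces to the smooth dependence of the constituents on $(a,b,c)$: the data $F(\tfrac52),F'(\tfrac52),F''(\tfrac52)$ are fixed, while $q_0(t_m;c)$, $V(t_m;c)$, $h(t_m;c)$ and the $h$-integrals depend smoothly on the parameters, the $c$-differentiability of $h$ having been secured in Lemma \ref{lemhcb} and the estimates preceding it. Granting this, for ${\bf A}_1,{\bf A}_2\in\mathcal{S}_A$ the segment $s\mapsto(1-s){\bf A}_1+s{\bf A}_2$ remains in $\mathcal{S}_A$, so the fundamental theorem of calculus along it gives
\begin{equation}
{\bf N}[{\bf A}_1]-{\bf N}[{\bf A}_2]=\left(\int_0^1 {\bf J}\big((1-s){\bf A}_1+s{\bf A}_2\big)\,ds\right)({\bf A}_1-{\bf A}_2),
\end{equation}
and hence $\|{\bf N}[{\bf A}_1]-{\bf N}[{\bf A}_2]\|_2\le\alpha\,\|{\bf A}_1-{\bf A}_2\|_2$ by \eqref{163}, so ${\bf N}$ contracts with ratio $\alpha<1$.

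Next I would verify the self-mapping property. For ${\bf A}\in\mathcal{S}_A$ the triangle inequality and the contraction bound just established yield
\begin{equation}
\|{\bf N}[{\bf A}]-{\bf A}_0\|_2\le\|{\bf N}[{\bf A}]-{\bf N}[{\bf A}_0]\|_2+\|{\bf N}[{\bf A}_0]-{\bf A}_0\|_2\le\alpha\,\|{\bf A}-{\bf A}_0\|_2+(1-\alpha)\rho_0,
\end{equation}
and since $\|{\bf A}-{\bf A}_0\|_2\le\rho_0$ the right side is at most $\alpha\rho_0+(1-\alpha)\rho_0=\rho_0$, so ${\bf N}[{\bf A}]\in\mathcal{S}_A$. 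With contraction and self-mapping in hand, the Banach fixed point theorem produces a unique ${\bf A}\in\mathcal{S}_A$ solving ${\bf A}={\bf N}[{\bf A}]$, which is equivalent to the matching system \eqref{16}--\eqref{17} and therefore to a unique triple $(a,b,c)\in\mathcal{S}$.

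The fixed-point argument itself is routine; the genuine obstacle is the rigorous verification of the two numerical hypotheses. Establishing \eqref{163} requires uniform bounds over $\mathcal{S}_A$ on every entry of ${\bf J}$, drawing on the derivative and $c$-derivative estimates for $q_0$, $V$ and $h$ from Lemmas \ref{lemq0}, \ref{lemB} and \ref{lemhcb} together with the control of $t_m$ from Definition \ref{Def1}; establishing \eqref{162} requires evaluating the residual ${\bf A}_0-{\bf N}[{\bf A}_0]$ to sufficient precision. I expect these estimates, rather than the abstract contraction principle, to consume the bulk of the effort.
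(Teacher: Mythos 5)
Your proof is correct and follows essentially the same route as the paper: a contraction-mapping argument in the closed ball $\mathcal{S}_A$, with the mean-value/segment-integral estimate converting the Jacobian bound \eqref{163} into a Lipschitz constant $\alpha$ and the triangle inequality plus \eqref{162} giving the self-mapping property. Your added remarks on convexity of $\mathcal{S}_A$ and on the smooth dependence of $\mathbf{N}$ on $(a,b,c)$ make explicit some points the paper leaves implicit, but the argument is the same.
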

\begin{proof}
The mean-value theorem implies 
\begin{multline}
\| {\bf N} [{\bf A} ] - {\bf A}_0 \|_2 
\le \| {\bf N} [{\bf A}_0] - {\bf A}_0 \|_2 + 
\| {\bf N} [{\bf A}] - {\bf N} [{\bf A}_0 ] \|_2  
\le \rho_0 (1-\alpha) + \| {\bf J} \|_2 \rho_0   
\le \rho_0 
\end{multline}
and also, if ${\bf A}_1, {\bf A}_2 \in \mathcal{S}_A$:
$$
\| {\bf N} [{\bf A}_1 ] 
- {\bf N} [{\bf A}_2 ] \|_2 
\le \|{\bf J} \|_2 \| {\bf A}_1 - {\bf A}_2 \|_2  
\le \alpha \|{\bf A}_1 - {\bf A}_2 \|_2 
$$
Thus, \eqref{162} and \eqref{163} imply that $\mathbf{N}: \mathcal{S}_A 
\rightarrow \mathcal{S}_A$ and that it is contractive there; the result follows from the contractive mapping theorem. 
\end{proof}
\subsection{Proof  of Proposition \ref{Prop4} }
Proposition \ref{Prop4} follows from Lemma \ref{lemMatch}
once we show that \eqref{162} and \eqref{163} hold. 
In the following two subsections 
it will be shown that
$\alpha \le 0.764$ and that $ \| {\bf A}_0 - \mathbf{N} [{\bf A}_0] \|_2 
\le 1.16 \times 10^{-5} \le (1-\alpha) \rho_0$ thereby completing
the proof of Proposition
\ref{Prop4}.

\begin{Remark} 
\label{remCont}
\rm{
Note that the proof of Proposition \ref{Prop4}  only requires smallness of the norms of $h$ and $E$  (we recall that  $F=F_0+E$) and on no further details about them. If in some application  $F_0$ needs to be made $C^2$, then this can be ensured by iterating $\mathbf{N}$ with $h=E=0$;  the
first thirteen digits obtained in this way are given in (\ref{eqabcCont}).

}
\end{Remark}

\subsection{Bounds on
$ \| \mathbf{N} ({\bf A}_0 ) - {\bf A}_0 \|$}

We note that 
\begin{multline}
\label{17.1}
\Big | a_0 - N_1 (a_0, b_0, c_0) \Big | \le   
\Big | a_0 - F_0^\prime (\tfrac52) + a_0 \left (q_0^\prime (t_{m,0}; c_0) - 
\frac{q_0 (t_{m,0}; c_0)}{2 t_{m,0}} \right ) \Big | \\
+ | E^{\prime} (\tfrac52) |   
+\frac{a_0}{3} t_{m,0}^{-3/2} e^{-3 t_{m,0}} \| h \|
\le 4.81 \times 10^{-6}
\end{multline}
\begin{multline}
\label{17.2}
\Big | b_0 - N_2 (a_0, b_0, c_0) \Big |
\le \Big | b_0 - F_0 (\tfrac52) + \tfrac52 a_0  
+ \sqrt{\frac{a_0}{2 t_{m,0}}}  q_0 (t_{m,0}; c_0) \Big | + \frac{5}{2}
\Big | a_0 - N_1 (a_0, b_0, c_0)
\Big | \\ 
+ \Big | E (\tfrac52) \Big | +\frac{1}{9} \sqrt{\frac{a_0}{2}} t_{m,0}^{-2} 
e^{-3 t_{m,0}} \|h\| 
\le 1.64 \times 10^{-5} 
\end{multline}
\begin{multline}
\label{17.3}
\Big | c_0 - N_3 (a_0, b_0, c_0) \Big |
\le \Big | c_0 - \frac{1}{\sqrt{2} a_0^{3/2} } \left ( V (t_{m,0}, c_0 ) \right )^{-1} 
e^{t_{m,0}} F_0^{\prime \prime} (\tfrac52) \Big | \\  
+ \frac{F_0^{\prime \prime} (\tfrac52)}{
\sqrt{2} c_0 a_0^{3/2}} t_{m,0}^{-1} e^{-t_{m,0}} \| h \|
\left ( V (t_{m,0}; c_0)  - \frac{e^{-2 t_{m,0}}}{c_0 t_{m,0}} \| h \| \right )^{-1}
\left ( V (t_{m,0} \right )^{-1} \\
+ 
\frac{e^{t_{m,0}}}{\sqrt{2} a_0^{3/2}} \Big | E^{\prime \prime} (\tfrac52) \Big |
\left ( V (t_{m,0}; c_0)   
- \frac{ e^{-2 t_{m,0}} \|h \|}{c_0 t_{m,0}} \right )^{-1}   
\le 1.33 \times 10^{-5}
\end{multline} 
implying
\begin{equation}
\| {\bf A}_0 - {\bf N} ({\bf A}_0 ) \|_2 
\le 1.16 \times 10^{-5}
\end{equation}

\subsection{Bounds on the derivatives of $N_j$ and 
on $\| {\bf J} \|_2$}
We note that 
\begin{multline}
\label{20}
\partial_{a} N_1 = 
-\left ( q_0^\prime (t_m; c) - \frac{q_0 (t_m; c)}{2 t_m} \right )
+a \left ( \frac{25}{4} - \frac{b^2}{a^2} \right ) t_m^{1/2} B(t_m; c) \\
- \int_{\infty}^{t_m} \frac{e^{-\tau}}{\tau^{1/2}}  h(\tau; c) d\tau 
- \frac{a}{2t_m^{1/2}} \left ( \frac{25}{4}
- \frac{b^2}{a^2} \right ) e^{-t_m} h (t_m) 
\end{multline}
Remark \ref{R7} implies that the last two terms on the rhs of \eqref{20} are bounded by
$t_{m,l}^{-1/2} e^{-t_{m,l}} h_m \left [ 1 + \frac{a_r}{2} \left (\frac{25}{4} 
- \frac{b_l^2}{a_r^2} \right ) \right ]$. 
Applying now Lemmas \ref{lemq0}, \ref{lemB} to (\ref{20}) we get
\begin{multline}
\label{20.1}
\Big | \partial_a N_1 \Big | 
\le \Big | - q_0^\prime (t_{m,0}, c_0) + \frac{q_0 (t_{m,0}, c_0)}{2 t_{m,0}} 
+a_0 \left ( \frac{25}{4} - \frac{b_0^2}{a_0^2} \right ) t_{m,0}^{1/2} B (t_{m,0}; c_0) \Big |  \\
+ \left \{ \frac{25}{4} (a_r-a_0) + \left ( \frac{b_r^2}{a_l} - \frac{b_0^2}{a_0} \right ) \right \} 
t_{m,r}^{1/2} B_m 
+ \frac{e^{-t_{m,l}}}{t_{m,l}^{1/2}} h_m \left \{ 1 + \frac{a_r}{2} \left [ 
\frac{25}{4} - 
\left ( \frac{b_l^2}{a_r^2} \right ) \right ] \right \}  \\
+ \left \{ 2 t_{m,r}^{1/2} B_m + a_0 \left ( \frac{25}{4} - \frac{b_0^2}{a_0^2} \right ) 
\left ( \frac{B_m}{2 t_{m,l}^{1/2}} + \frac{c_r}{2 t_{m,l}^{1/2}} B_{m,2,t} \right ) \right \} (t_{m,r}-t_{m,0})       
\\
+ \left ( q_{0,d,c,m} + a_0 \left ( \frac{25}{4} - \frac{b_0^2}{a_0^2} \right ) \sqrt{t_{m,r}} B_{m,c} \right ) 
(c_r - c_0) 
\le 0.081  \ , 
\end{multline}
The maximal value of the bounds is attained when $c=c_r$ and $T=t_{m,l}$, which we used to get the results above.
\begin{equation}
\label{21}
\partial_{b} N_1 = \sqrt{2 a} \left [ 2 t_m B (t_m; c) - e^{-t_m} h(t_m; c) 
\right]
\end{equation}
hence using \eqref{hbound} and Lemma \ref{lemB},
\begin{equation}
\label{21.0.1}
\Big | \partial_b N_1 \Big | \le \sqrt{2 a_r} \left ( 2 t_{m,r} B_m + e^{-t_{m,l}} h_m \right ) 
\le 0.059 
\end{equation}
\begin{equation}
\label{21.1}
\partial_c N_1 
= -a \partial_c \left \{ q_0^\prime (t_m; c) - \frac{q_0 (t_m)}{2 t_m} \right \}
-a \int_{\infty}^{t_m} \frac{e^{-\tau}}{\sqrt{\tau}} \partial_c h (\tau; c) d\tau  
\end{equation}
implying from 
Lemmas \ref{lemq0} and equation
\eqref{eqhcm},
\begin{equation}
\label{21.2}
\Big | \partial_c N_1 \Big | 
\le  
a_r q_{0,d,c,m}
+ a_r e^{-t_{m,l}} t_{m,l}^{-1/2} h_{c,m} 
\le 0.163 
\end{equation}
We now consider 
\begin{multline}
\label{21.3}
\partial_a N_2 = -\frac{5}{2} \partial_a N_1  
-\frac{1}{2} \sqrt{\frac{1}{2 a t_m}} q_0(t_m)  
-\frac{1}{2} \sqrt{\frac{a}{2 t_m}} \left ( \frac{25}{4}
-\frac{b^2}{a^2} \right ) 
\left ( q_0^\prime (t_m) - \frac{q_0 (t_m)}{2 t_m} \right ) \\
-\frac{1}{2} \sqrt{\frac{1}{2 a t_m}} \mathcal{E} (t_m)  
-\frac{1}{2} \sqrt{\frac{a}{2 t_m}} \left ( \frac{25}{4}-\frac{b^2}{a^2} \right ) 
\int_{\infty}^{t_m} \frac{e^{-\tau} h(\tau)}{\sqrt{\tau}} d\tau  
\end{multline}
It follows from Lemmas \ref{lemn2}, \ref{lemq0}, \ref{lemB},
Proposition \ref{Prop1},  and equations \eqref{hbound} and \eqref{eqhcm}
that
\begin{multline}
\label{21.4}
\Big | \partial_a N_2 \Big | \le \frac{5}{2} \Big | \partial_a N_1 \Big |  
+\frac{1}{2} \sqrt{\frac{1}{2 a_l t_{m,l}}} \Big | q_0 (t_{m,0}; c_0) 
+ a_0 \left ( \frac{25}{4} - \frac{b_0^2}{a_0^2} \right ) 
\left ( q_0^\prime (t_{m,0}; c_0) - \frac{q_0 (t_{m,0}; c_0)}{2 t_{m,0}} \right ) \Big | \\
+ \frac{1}{2} \sqrt{\frac{1}{2a_l t_{m,l}}} \left \{ \frac{25}{4} (a_r-a_0) + \left ( \frac{b_r^2}{a_l} - 
\frac{b_0^2}{a_0} \right )
\right \} q_{0,d,m} + 
\frac{e^{-t_{m,l}}}{2^{3/2} a_l^{1/2} t_{m,l}}  
\left ( 1 + a_r \left [\frac{25}{4}-\frac{b_l^2}{a_r^2} \right ] \right ) h_m \\
+\frac{1}{2} \sqrt{\frac{1}{2 a_l t_{m,l}}} \left \{ q_{0,d,m} + \frac{q_{0,m}}{2 t_{m,l}}   
+ a_0 \left ( \frac{25}{4} - \frac{b_0^2}{a_0^2} \right ) 2 t_{m,r}^{1/2} B_{m} \right \} (t_{m,r}-t_{m,0}) \\
+\frac{1}{2} \sqrt{\frac{1}{2 a_l t_{m,l}}} \left \{ q_{0,c,m} 
+ a_0 \left ( \frac{25}{4} - \frac{b_0^2}{a_0^2} \right ) q_{0,d,c,m} \right \} (c_r-c_0) 
\le 0.232
\end{multline}
We also note that
\begin{equation}
\label{21.5.0}
\partial_b N_2 = - \frac{5}{2} \partial_b N_1 - 
\left ( q_0^{\prime} (t_m; c) - \frac{q_0 (t_m; c)}{2 t_m} \right ) 
- \int_{\infty}^{t_m} \frac{d\tau}{\sqrt{\tau}} e^{-\tau} h (\tau; c) , 
\end{equation}
and therefore Lemma \ref{lemq0} and (\ref{hbound}) imply
\begin{equation}
\label{21.5}
\Big | \partial_b N_2 \Big | \le \frac{5}{2} \Big | \partial_b N_1 \Big | + 
q_{0,d,m} + e^{-t_{m,l}} t_{m,l}^{-1/2} h_{m}
\le 0.168
\end{equation}
Now,
\begin{equation}
\label{21.6}
\partial_c N_2 = - \frac{5}{2} \partial_c N_1 - 
\sqrt{\frac{a}{2 t_m}} \partial_c q_0 (t_m; c)
-\sqrt{\frac{a}{2}} \int_{\infty}^{t_m} \tau^{-1/2} \int_{\infty}^\tau s^{-1/2} e^{-s} 
\partial_c h (s; c) ds d\tau 
\end{equation}
Hence, (\ref{eqhcm}) and  Lemma \ref{lemq0} imply 
\begin{equation}
\label{21.7}
\Big | 
\partial_c N_2 \Big | \le \frac{5}{2} \Big | \partial_c N_1 \Big | +
\sqrt{\frac{a_r}{2t_{m,l}}} q_{0,c,m}
+\sqrt{\frac{a_r}{2}} t_{m,l}^{-1} h_{c,m} e^{-t_{m,l}}  
\le 0.468
\end{equation}
Also,
\begin{multline}
\label{21.8.0}
\partial_a N_3 = 
\frac{1}{2 \sqrt{2} a^{3/2}} 
\left [ V(t_m; c) + \frac{1}{c} h (t_m; c)
\right ]^{-1} e^{t_m} F^{\prime \prime} (\frac{5}{2})
\left \{ - \frac{3}{a} + \left ( \frac{25}{4} - \frac{b^2}{a^2} \right ) \right .
\\
\left . \times \left ( 1 + \left [ V (t_m; c) + \frac{1}{c} h (t_m; c) \right ]^{-1}
\left [ V^\prime (t_m; c) + \frac{1}{c} h^\prime (t_m; c) \right ] \right )  
\right \}
\end{multline}
Therefore, from Lemma \ref{lemB}, equations \eqref{hbound}, \eqref{eqdm} and the positivity of 
$V_{min} - \frac{h_m}{c_l}$, $F_0^{\prime \prime}$ and 
$\frac{25}{4} - \frac{b_l^2}{a_r^2} - \frac{3}{a_l}$ (see 
Definition \ref{Def1}, \eqref{12.1.0}-\eqref{12.1.1} and \eqref{eqBmin}); in \eqref{eqBmin}
we used $c =c_r$, $T=t_{m,l}$, which minimize $V_m$).
It follows
that
\begin{multline}
\label{21.8}
\Big | \partial_a N_3 \Big | \le 
\frac{1}{2 \sqrt{2} a_l^{3/2}}  
\left [ V_{min} - \frac{h_m}{c_l} \right ]^{-1} 
e^{t_{m,r}} \left ( F_0^{\prime \prime} (\tfrac52) + \Big | E^{\prime \prime} (\tfrac52) \Big | \right )
\left \{ \frac{25}{4} - \frac{b_l^2}{a_r^2} - \frac{3}{a_l} 
+ \left ( \frac{25}{4} - \frac{b_l^2}{a_r^2} \right ) \right . \\ 
\left . \times 
\left [ V_{min} - \frac{h_m}{c_l} \right ]^{-1} \left [
V_{d,m} + \frac{1}{c_l} h_{d,m} \right ] 
\right \}
\le 0.44
\end{multline}
Further,
\begin{multline}
\label{21.9}
\partial_b N_3 = 
\frac{\sqrt{t_m}}{a^{2}} e^{t_m} F^{\prime \prime} (\tfrac52)
\left [ V(t_m; c) + \frac{1}{c} h (t_m; c) \right ]^{-2} \left [ V (t_{m;c} ) + \frac{1}{c} h (t_m; c)
- V^\prime (t_m; c) - \frac{1}{c} h^\prime (t_m; c) \right ] 
\end{multline}
Using again Lemma \ref{lemB}, and  equations \eqref{hbound},  \eqref{eqdm} we get
\begin{multline}
\label{21.10}
\Big | \partial_b N_3 \Big | \le 
\frac{\sqrt{t_m}}{a_l^{2}} e^{t_{m,r}} \left ( F_0^{\prime \prime} (\tfrac52) + \Big | E^{\prime \prime} (\tfrac52 ) \Big | 
\right ) 
\left [ V_{min} - \frac{h_m}{c} \right ]^{-2} 
\times \left \{ V_m + \frac{1}{c} h_m + V_{d,m} + \frac{1}{c} h_{d,m} \right \}
\le 0.384
\end{multline}
Furthermore, 
\begin{equation}
\label{21.11}
\partial_c N_3 = 
-\frac{1}{\sqrt{2} a^{3/2}} e^{t_m} F^{\prime \prime} (\tfrac52) 
\left [ V (t_m; c) + \frac{1}{c} h \right ]^{-2} 
\left [ \partial_c V (t_m; c) + \frac{1}{c} \partial_c h (t_m; c) - \frac{1}{c^2} h (t_m; c) \right ] 
\end{equation}
Lemma \ref{lemB} and  equations \eqref{hbound},  \eqref{eqhcm} imply
\begin{multline}
\label{21.12}
\Big | \partial_c N_3 \Big | \le  
\frac{e^{t_{m,r}}}{\sqrt{2} a_l^{3/2}} \left ( F_0^{\prime \prime} (\tfrac52) + \Big | E^{\prime \prime} (\tfrac52 ) \Big | 
\right )
\left [ V_{min} - \frac{h_m}{c} \right ]^{-2} 
\left [ V_{c,m} + \frac{1}{c_l} h_{c,m} + \frac{1}{c_l^2} h_m \right ]
\le 0.0029
\end{multline}
By straightforward calculations we get $\| {\bf J} \|_2 \le 0.764$.

\section{Appendix}

\subsection{Bounds on $q_0$ and $\partial_c q_0$}

Using the integral representations of $I_0$ and $J_0$,  (\ref{14.3.0}), (\ref{14.5}) and (\ref{14.5.0}) imply
\begin{equation}
\label{A.0}
q_0 (t)  = \frac{c}{\sqrt{t}} e^{-t} t \int_0^\infty \frac{e^{-st}}{(1+s)^{3/2}} ds 
+ c^2 e^{-2 t}  \int_0^\infty e^{-st} U(s) ds, 
\end{equation}  
where
\begin{equation}
\label{14.3.0.0.1}
U (s) = \frac{1}{2 (1+s/2)^{3/2}} - \frac{1}{2 (1+s)^{3/2} } 
- \frac{s}{(2+s)^2 \sqrt{1+s}} 
\end{equation}
Note that
\begin{equation}
\label{14.3}
\frac{Q_2 (t)}{t} = \int_0^\infty e^{-st} U (s) ds 
\end{equation}  
Making the change of variable
\begin{equation}
s = - 1 + \frac{(y-1)^2}{4y}
\end{equation}
which maps one-to-one  $ (0, \infty)$ onto $(3+2 \sqrt{2}, \infty)$ we obtain
from \eqref{14.3.0.0.1}
\begin{equation}
\label{A.2}
U (s(y))= \frac{4 y^{3/2} (3 -2 \sqrt{2} ) (y-3-2\sqrt{2}}{(y-1)^3 (1+y)^4} 
P_3 (y)
\end{equation}
where 
\begin{equation}
\label{A.3}
P_3 (y) = -y^3 + (17+10\sqrt{2} ) y^2 - (11+4 \sqrt{2} ) y + 3 + 2 \sqrt{2} 
\end{equation}
This cubic has only one real root $y_0 = 30.604 \cdots $ implying
$s = s_0 = 6.159 \cdots$. Since $P(y) >  0$ for $y < y_0$ and $P(y) < 0$ for
$y > y_0$,
$s_0 > s > 0$ corresponds to $y_0 > y > 3 + 2 \sqrt{2}$ and $s > s_0$ corresponds to $y> y_0$,
it follows from (\ref{A.2}) that
\begin{equation}
\label{eqUsign}
U(s)  >  0 ~~{\rm in} ~ (0, s_0) ~~{\rm and} ~~  U(s) < 0 ~~{\rm in }  (s_0, \infty) 
\end{equation}
Further, for $s > s_0$, ({\it i.e.} 
$y > y_0 \approx 30.604.. $), the functions
$$y^{-3} P_3 (y),\ ,\frac{y^{1/2}}{(y-1)}\ \text{and } \frac{4 y^4}{(y-3-2\sqrt{2} )(y-1)^2 (1+y)^4}$$
are  decreasing.
Therefore, it follows from (\ref{A.2}) that
for $s \ge s_0$, ($y > y_0)$ 
\begin{equation}
\label{A.4}
0 > U (s(y)) \ge -\frac{4 y^{9/2} (3 -2 \sqrt{2} ) (y-3-2\sqrt{2}}{(y-1)^3 (1+y)^4} 
\ge -\frac{4 y_0^{9/2} (3-2 \sqrt{2}) (y_0-3-2\sqrt{2})}{(y_0-1)^3 (1+y_0)^4}    
=-0.09437.. 
\end{equation}
Therefore, 
\begin{equation}
0 \ge \int_{s_0}^\infty U(s) e^{-s t} ds > -\frac{0.0944}{t} e^{-s_0 t}   
\end{equation}
Since $U(s)$ is being positive on $(0, s_0 )$ (see \eqref{eqUsign})
\begin{equation}
\int_0^{s_0} U(s) e^{-st} ds 
\end{equation} 
is clearly a decreasing positive function of $t$ 
for $t \ge T$ at thus attains its maximum at 
$t=T$.
Therefore
for $t \ge T$ we have
\begin{equation}
\int_0^\infty U(s) e^{-st} dt \le \int_0^{s_0} U(s) e^{-s T} ds  
\le \int_0^\infty U(s) e^{-s T} ds 
+ \frac{0.0944}{T} e^{-s_0 T}
\end{equation}
We conclude that
\begin{equation}
\label{A.q0}
\Big | q_0 (t) \Big |  \le \frac{|c| e^{-t}}{\sqrt{t}} +
c^2 e^{-2 t} \frac{Q_2 (T)}{T} +     
\frac{c^2}{T} e^{-2 t} 
0.0944 e^{-s_0 T}
\end{equation}
We note that that since $s_0 = 6.159 \cdots$, for $T \ge 1.99$,
$0.0944 e^{-s_0 T} \le 4.5 \times 10^{-7}$.
The numerical value $Q_2 (1.99) =0.0147 \cdots$ can be easily obtained from rigorous formulas 
(e.g. \cite{Abramowitz} 7.1.28) 
By differentiating $q_0$ with respect to $c$ we get in a similar way
\begin{equation}
\label{A.q0.2}
\Big | \partial_c q_0 (t) \Big |  \le \frac{e^{-t}}{\sqrt{t}} +
\frac{2 |c| e^{-2 t}}{T} Q_2 (T) +     
2 |c| \frac{e^{-2 t}}{T}  
0.0944 e^{-s_0 T}
\end{equation}

\subsection{Bounds on $R_3$ for $t \ge T \ge 1.99$}

In the formula (\ref{eqR3}) for $R_3$,  $I_0$ and $J_0$ have integral representations, see
(\ref{14.5}) and (\ref{14.5.0}). Using these representations we obtain
\begin{equation}
\label{A.3.0}
-I_0^2 (t) = -\frac{1}{4} \int_0^\infty e^{-st}  \int_0^s \frac{d\tau}{(1+\tau)^{3/2} (1+(s-\tau))^{3/2}} 
= -\int_0^\infty \frac{s e^{-st}}{(s+2)^2 \sqrt{1+s}} ds 
\end{equation}
\begin{equation}
\label{A.4.0}
-t I_0^2 = -\int_0^\infty e^{-st} \partial_s \left ( \frac{s}{(s+2)^2 \sqrt{1+s}} \right ) ds 
= \int_0^\infty e^{-st} \frac{3 s^2 - 4}{2 (s+2)^3 (s+1)^{3/2}}  ds  
\end{equation}
Adding the expressions above, we obtain from (\ref{eqR3})
\begin{equation}
\label{A.5}
R_3 (t) = \int_0^\infty ds e^{-st} \mathcal{R}_3 (s) ds \ ,
\end{equation}
where
\begin{equation}
\label{A.8}
\mathcal{R}_3 (s) =
\left \{ \frac{1}{4 (1+s/2)^{3/2}} - \frac{s}{(s+2)^2 \sqrt{1+s} } 
+ \frac{3 s^2-4}{2 (s+2)^3 (s+1)^{3/2} } \right \}
\end{equation}
We define
\begin{equation}
\label{A.7.0}
s = -1 + \frac{(y-1)^2}{4y}
\end{equation}
Clearly $s$ is increasing in $y$ and maps 
$ \left ( 3 + 2 \sqrt{2}, \infty \right ) $ to
$(0, \infty)$.
Thus
\begin{equation}
\label{A.8.0}
\mathcal{R}_3 (s(y)) =
\frac{4 y^{3/2} (2-\sqrt{2}) (y-3-2\sqrt{2})}{
(y+1)^6 (y-1)^3} P_5 (y) \ ,
\end{equation}
where
\begin{equation}
\label{A.7}
P_5 (y) = 1 - ( 21 - 10 \sqrt{2} ) y + (42-2 \sqrt{2} ) y^2 
- (42 + 2 \sqrt{2} ) y^3 + (10 \sqrt{2} + 21) y^4 - y^5
\end{equation}
Elementary inequalities imply that $P_5$ has
a real root $y_0 \in (33.851, 33.852)  $ 
corresponding to $s=s_0 \in (6.9701, 6.9704)$
By factoring out $y-y_0$ and rexpanding the rest in powers of $y-5$, we find
\begin{equation}
\label{A.8.0.0}
P_5 (y) = (y-y_0) \left ( A_0 + A_1 (y-5) + A_2 (y-5)^2 + A_3 (y-5)^3 - (y-5)^4
\right )  
\end{equation}
where
\begin{equation}
\label{A.9}
A_0 = -y_0^4 + 
(10 \sqrt{2}+16) y_0^3+
(48 \sqrt{2}+38) y_0^2+ (238 \sqrt{2}+232) y_0+1139+1200 \sqrt{2}
\end{equation}
\begin{equation}
\label{A.10}
A_1 = -y_0^3+ (11+10\sqrt{2}) y_0^2+(98 \sqrt{2}+93) y_0+728 \sqrt{2}+697
\end{equation}
\begin{equation}
\label{A.11}
A_2 = -y_0^2+(10 \sqrt{2}+6) y_0+148 \sqrt{2}+123
\end{equation}
\begin{equation}
\label{A.12}
A_3 = 1+10 \sqrt{2} - y_0
\end{equation}
It is readily checked that for $y_0$ in the interval $(33.851, 33.852)$,
the coefficients $A_0$, ..., $A_3$ are negative, implying that 
there has only one zero for $y \ge 3 + 2 \sqrt{2} > 5 $, namely
$y=y_0$.
This immediately implies that for $s \in (0, s_0)$ where $s_0=6.97\cdots$   we have
$\mathcal{R}_3 (s) > 0$, while for $s > s_0$, $\mathcal{R}_3 < 0$. 

We now minimize $\mathcal{R}_3 (s (y))$ for 
$y > y_0 =33.851\cdots$.
By simple estimates of the derivative, $y^{-5} P_5 (y)$ is seen to be 
decreasing, and $0 \ge y^{-5} P_5 > -1$ for $y \in [y_0, \infty)$.      
In this interval $y-3-2\sqrt{2} \le y-1$ and thus
for $s > s_0$,
(or $y > y_0$) we obtain using  (\ref{A.8.0}), that
\begin{multline}
\label{A.14.0}
\mathcal{R}_3 (s(y)) \ge 
-\frac{4 y^{13/2} (2-\sqrt{2})}{
(y+1)^6 (y-1)^2} \ge  
-\frac{4 y_0^{13/2} (2-\sqrt{2})}{
(y_0+1)^6 (y_0-1)^2} 
\ge 
-0.0107, 
\end{multline}
since
${4 y^{13/2} (2-\sqrt{2})}{
(y+1)^{-6} (y-1)^{-2}}$ is decreasing for $y \in (y_0, \infty)$. 
Therefore since for $t \ge T$,
\begin{equation}
\label{A.14}
0 \le \int_0^{s_0} e^{-st} \mathcal{R}_3 (s) ds \le \int_0^{s_0} e^{-sT} 
\mathcal{R}_3 (s) ds \ , 
\end{equation}
\begin{equation}
\label{A.15}
0 \le -\int_{s_0}^\infty e^{-st} \mathcal{R}_3 (s) 
\le -\int_{s_0}^\infty e^{-s T} \mathcal{R}_3 (s) 
\le \frac{0.0107}{T} e^{-s_0 T} \ ,   
\end{equation}
it follows that
\begin{equation}
\label{eqR3m}
R_3 (t) \le R_3 (T) + 
\frac{0.0107}{T} e^{-s_0 T}  
\le R_3 (T) + 1.02 \times 10^{-8} 
=:R_{3,m} \le 0.02057 
\end{equation}

\subsection{Estimating $R_4$}

Using (\ref{14.5})--(\ref{14.5.0}) in  (\ref{eqR4}) we get
\begin{equation}
R_4 (t) = \frac{1}{8t} (J_1 - I_1 ) \left ( 1 -I_1 \right ) - \frac{I_1}{16 t^2} ( J_1 - I_1^2 ) 
+ \frac{I_0^3}{4} \, 
\end{equation}
where
\begin{equation}
I_1 (t) = 2 t I_0 (t) = t \int_0^\infty \frac{e^{-st}}{(1+s)^{3/2}} ds \in (0, 1) 
\end{equation} 
\begin{equation}
J_1 (t) = 4 t J_0 (t) = t \int_0^\infty \frac{e^{-st}}{(1+s/2)^{3/2}} ds \in (0, 1)
\end{equation}
Clearly $J_1 (t) \ge I_1 (t)$.
This implies that
\begin{equation}
-\frac{1}{16 t^2} \left ( J_1 - I_1^2 \right )  \le 
R_4 (t) \le \frac{1}{8t} (J_1 - I_1) + \frac{I_0^3}{4}
\end{equation}
Now, 
\begin{equation}
\frac{1}{t} ( J_1 -I_1 ) = \int_0^\infty \frac{(1+s)^{3/2} - (1+s/2)^{3/2}}{(1+s/2)^{3/2} (1+s)^{3/2} } e^{-st}     
ds
\end{equation}
which is clearly decreasing in $t$ as is $I_0$ (since they are Laplace
transforms of positive functions),  and therefore they 
attain their  maximum at $t=T \ge 1.99$.
Furthermore, it can be checked that
\begin{multline}
\frac{1}{16 t^2} \left (J_1 - I_1^2 \right ) = \frac{1}{8 t} \int_0^\infty e^{-st} 
\\
\left \{  {\frac {16 s \sqrt {1+s}+10 s^2 \sqrt {1+s} +2 s^3 \sqrt {1+s} 
+ 3 s^2 \sqrt {4+2\,s} + 8 \sqrt{1+s}-8\sqrt {1+s/2} }{ \left( 
2+s \right) ^{3}\sqrt {4+2\,s} \left( 1+s \right) ^{3/2}}}
\right \} ds    
\end{multline}
Arguing in the same way, we see that 
$\frac{1}{16 t^2} \left (J_1 - I_1^2 \right )$ is decreasing and
attains its maximum at $t=T$, as does $\frac{I_0^3}{4}$.
Evaluating $I_0$ and $J_0$ (see, once more,  \cite{Abramowitz}, 7.1.28) we get
\begin{equation}
0 \le \frac{1}{8 t} ( J_1 (t) -I_1 (t) ) +\frac{I_0^3 (t)}{4}
\le \frac{1}{8 T} ( J_1 (T) -I_1 (T) ) +\frac{I_0^3 (T)}{4}
\le \frac{1}{2} J_0 (T) - \frac{1}{4} I_0 (T)  
+ \frac{I_0^3 (T)}{4} \le 0.009042    
\end{equation}
and 
\begin{equation}
0 \le \frac{1}{16 t^2} \left ( J_1 - I_1^2 \right ) 
\le \frac{1}{16 T^2} \left ( J_1 (T) - I_1^2 (T) \right ) 
= \frac{J_0 (T)}{4 T} - \frac{I_0^2 (T)}{4}  \le 0.00572
\end{equation} 
and therefore, for $t \ge T \ge 1.99$, we have 
\begin{equation}
\label{eqR4m}
\Big | R_4 (t) \Big | \le R_{4,m} \le 0.009042  
\end{equation}

\section{Acknowledgments} The work of O.C. and S.T  was partially supported by the NSF
grant DMS 1108794.

\vfill \eject
\end{document}